\DeclareMathOperator{\mult}{mult}
\begin{document}

\newtheorem{The}{Theorem}[section]

\newtheorem{Lem}[The]{Lemma}

\newtheorem{Prop}[The]{Proposition}

\newtheorem{Cor}[The]{Corollary}

\newtheorem{Rem}[The]{Remark}

\newtheorem{Obs}[The]{Observation}

\newtheorem{SConj}[The]{Standard Conjecture}

\newtheorem{Titre}[The]{\!\!\!\!}

\newtheorem{Conj}[The]{Conjecture}

\newtheorem{Question}[The]{Question}

\newtheorem{Prob}[The]{Problem}

\newtheorem{Def}[The]{Definition}

\newtheorem{Not}[The]{Notation}

\newtheorem{Claim}[The]{Claim}

\newtheorem{Conc}[The]{Conclusion}

\newtheorem{Ex}[The]{Example}

\newtheorem{Fact}[The]{Fact}

\newtheorem{Formula}[The]{Formula}

\newtheorem{Formulae}[The]{Formulae}

\newtheorem{The-Def}[The]{Theorem and Definition}

\newtheorem{Prop-Def}[The]{Proposition and Definition}

\newtheorem{Cor-Def}[The]{Corollary and Definition}

\newtheorem{Conc-Def}[The]{Conclusion and Definition}
\theoremstyle{remark}
\newtheorem{Terminology}[The]{Note on terminology}

\newcommand{\C}{\mathbb{C}}

\newcommand{\R}{\mathbb{R}}

\newcommand{\N}{\mathbb{N}}

\newcommand{\Z}{\mathbb{Z}}

\newcommand{\Q}{\mathbb{Q}}

\newcommand{\Proj}{\mathbb{P}}

\newcommand{\Rc}{\mathcal{R}}

\newcommand{\Oc}{\mathcal{O}}

\newcommand{\Vc}{\mathcal{V}}

\newcommand{\Id}{\operatorname{Id}}

\newcommand{\pr}{\operatorname{pr}}

\newcommand{\rk}{\operatorname{rk}}

\newcommand{\im}{\operatorname{im}}

\newcommand{\del}{\partial}

\newcommand{\delbar}{\bar{\partial}}

\newcommand{\Cdot}{{\raisebox{-0.7ex}[0pt][0pt]{\scalebox{2.0}{$\cdot$}}}}

\newcommand\nilm{\Gamma\backslash G}

\newcommand\frg{{\mathfrak g}}

\newcommand{\fg}{\mathfrak g}

\newcommand{\Oh}{\mathcal{O}}

\newcommand{\Kur}{\operatorname{Kur}}

\newcommand\gc{\frg_\mathbb{C}}

\newcommand\jonas[1]{{\textcolor{green}{#1}}}

\newcommand\luis[1]{{\textcolor{red}{#1}}}

\newcommand\dan[1]{{\textcolor{blue}{#1}}}

\begin{center}

{\Large\bf Higher-Page Hodge Theory of Compact Complex Manifolds}

\end{center}

\begin{center}

{\large Dan Popovici, Jonas Stelzig and Luis Ugarte}

\end{center}

\vspace{1ex}

\noindent{\small{\bf Abstract.} On a compact $\partial\bar\partial$-manifold $X$, one has the Hodge decomposition: the de Rham cohomology groups split into subspaces of pure-type classes as $H_{dR}^k (X)=\oplus_{p+q=k}H^{p,q}(X)$, where the $H^{p,q}(X)$ are canonically isomorphic to the Dolbeault cohomology groups $H_{\delbar}^{p,q}(X)$. For an arbitrary nonnegative integer $r$, we introduce the class of page-$r$-$\partial\bar\partial$-manifolds by requiring the analogue of the Hodge decomposition to hold on a compact complex manifold $X$ when the usual Dolbeault cohomology groups $H^{p,\,q}_{\bar\partial}(X)$ are replaced by the spaces $E_{r+1}^{p,\,q}(X)$ featuring on the $(r+1)$-st page of the Fr\"olicher spectral sequence of $X$. The class of page-$r$-$\partial\bar\partial$-manifolds coincides with the usual class of $\partial\bar\partial$-manifolds when $r=0$ but may increase as $r$ increases. We give two kinds of applications. On the one hand, we give a purely numerical characterisation of the page-$r$-$\partial\bar\partial$-property in terms of dimensions of various cohomology vector spaces. On the other hand, we obtain several classes of examples, including all complex parallelisable nilmanifolds and certain families of solvmanifolds and abelian nilmanifolds. Further, there are general results about the behaviour of this new class under standard constructions like blow-ups and deformations.

\vspace{2ex}

\noindent {\bf Keywords:} Hodge theory; cohomology theories of compact complex manifolds; Fr\"olicher spectral sequence; deformations of complex structures; blow-up of a complex manifold; nilmanifolds and solvmanifolds.

\vspace{2ex}

\section{Introduction}\label{section:Introduction} Let $X$ be an $n$-dimensional compact complex manifold. Recall the following notion that goes back to Deligne-Griffiths-Morgan-Sullivan [15] in the form (equivalent to that in [15]) and with the name given in [30, Definition 1.6].

\begin{Def}\label{Def:dd-bar-lemma} A compact complex manifold $X$ is said to be a $\partial\bar\partial$-{\bf manifold} if for any $d$-closed {\it pure-type} form $u$ on $X$, the following exactness properties are equivalent: \\

\hspace{10ex} $u$ is $d$-exact $\Longleftrightarrow$ $u$ is $\partial$-exact $\Longleftrightarrow$ $u$ is $\bar\partial$-exact $\Longleftrightarrow$ $u$ is $\partial\bar\partial$-exact.

\end{Def}

The classical {\bf $\partial\bar\partial$-lemma} asserts that every compact K\"ahler manifold is a $\partial\bar\partial$-manifold. More generally, thanks to [15], every {\it class ${\cal C}$ manifold} (i.e. every compact complex manifold bimeromorphically equivalent to a compact K\"ahler manifold) is a $\partial\bar\partial$-manifold. However, there exist many $\partial\bar\partial$-manifolds that are not of {\it class ${\cal C}$} (see e.g. [11], [24], [42], [30, Obs. 4.10], [3], [5, Thm 3.8], [16]).

On the other hand, every $\partial\bar\partial$-manifold admits canonically, namely in a way that depends only on the complex structure and does not involve arbitrary choices of metrics or other objects, a Hodge decomposition of the de Rham cohomology into a direct sum of subspaces of pure-type classes $H_{dR}^k(X)=\bigoplus _{p+q=k}H^{p,q}(X)$, with canonical isomorphisms $H_{\delbar}^{p,q}(X)\cong H^{p,q}(X)$, and the conjugation induces the Hodge symmetry, i.e. an antilinear isomorphism $H^{p,q}_{\delbar}(X)\cong H_{\delbar}^{q,p}(X)$ (accounting for the fact that some authors call these manifolds {\it cohomologically K\"ahler}).  In particular, the Fr\"olicher spectral sequence (FSS) of any $\partial\bar\partial$-manifold degenerates at the first page. However, the converse fails. (Indeed, as is well known, any non-K\"ahler compact complex surface provides a counter-example to the converse.) Meanwhile, $\partial\bar\partial$-manifolds also have good deformation and modification properties. For a review of these and some further properties of $\partial\bar\partial$-manifolds, see e.g. [30]. Let us now mention only a few of these for the reader's convenience:

\begin{enumerate}
	\item[(1)] The $\partial\bar\partial$-property is {\it deformation open} in the following sense: if $(X_t)_{t\in B}$ is a holomorphic family of compact complex manifolds $X_t$ parametrised by an open disc $B\subset\C$ about the origin (or by any complex manifold $B$), whenever $X_0$ (or any fibre $X_{t_0}$ with $t_0\in B$) is a $\partial\bar\partial$-manifold, every $X_t$ with $t\in B$ sufficiently close to $0$ (resp. to $t_0\in B$) is again a $\partial\bar\partial$-manifold. (See [42] or [8].)

\item[(2)] The $\partial\bar\partial$-property is {\it stable under contractions} in the following sense: if $f:\widetilde{X}\longrightarrow X$ is a holomorphic bimeromorphic map (i.e. a modification) between compact complex manifolds and if $\widetilde{X}$ is a $\partial\bar\partial$-manifold, then $X$ is again a $\partial\bar\partial$-manifold. (See [15, Theorem 5.22].) In particular, $f$ may be the blow-up of $X$ along a smooth centre $Z$. However, it is still unknown whether $X$ being a $\partial\bar\partial$-manifold implies that $\widetilde{X}$ is a $\partial\bar\partial$-manifold in full generality. It has recently been shown (see e.g. [6], [37], [38], [35], [43] or also \S \ref{section:geom-prop_page-r-ddbar}) that, on the one hand, $\widetilde{X}$ is a $\partial\bar\partial$-manifold whenever both $X$ and $Z$ are, and, on the other hand, that the $\partial\bar\partial$-property is {\it stable under blow-ups with smooth centres} if and only if it is inherited by any submanifold of a $\partial\bar\partial$-manifold. So, the remaining open problem in this direction is this {\it submanifold heredity} issue.

\item[(3)] The $\partial\bar\partial$-property of compact Calabi-Yau manifolds implies the {\it unobstructedness} of the small deformations of the complex structure in the following sense: if $X$ is a compact $\partial\bar\partial$-manifold whose canonical bundle $K_X$ is trivial, the Kuranishi family of $X$ is unobstructed. (This statement in the case where $X$ is K\"ahler is called the Bogomolov-Tian-Todorov Theorem -- see [9], [39], [40].)

\end{enumerate}

\vspace{2ex}

One of our main goals in this work is to relax the notion of $\partial\bar\partial$-manifold to accommodate a Hodge theory involving the higher pages of the Fr\"olicher spectral sequence (FSS) of $X$ when degeneration does not occur at the first page. As a result, we enhance the class of $\partial\bar\partial$-manifolds to classes of manifolds that seem worthy of further attention. This is motivated by the existence of many well-known non-K\"ahler and even non-$\partial\bar\partial$ compact complex manifolds, such as the Iwasawa manifold and higher-dimensional analogues thereof, whose Fr\"olicher spectral sequence degenerates only at the second page or later. Thus, we aim for a unified Hodge theoretical treatment of as large a class of compact complex manifolds as possible.

This approach enables us to get exact analogues of the usual Hodge decomposition and symmetry properties for what we call {\bf page-$(r-1)$-$\partial\bar\partial$-manifolds} using the spaces $E_r^{p,\,q}(X)$ featuring on the $r$-th page of the FSS, for some given $r\geq 2$, rather than the usual spaces $E_1^{p,\,q}(X)=H^{p,\,q}_{\bar\partial}(X)$ of the Dolbeault cohomology. The standard notion of $\partial\bar\partial$-manifold coincides with that of {\bf page-$0$-$\partial\bar\partial$-manifold}, while the class of {\bf page-$r$-$\partial\bar\partial$-manifolds}, our main find in this work, may grow as $r\in\N$ increases.

\subsection{Overview of the results}\label{subsection:main_results-statements_intro}

The following statement sums up several results and definitions of sections \ref{section:page-r-dd-bar} and \ref{section:page-r}. 
\begin{The-Def}\label{The-Def:main-def-prop_introd} Let $X$ be a compact complex manifold with $\mbox{dim}_\C X=n$. Fix an arbitrary $r\in\N^\star$. The following statements are equivalent.
\begin{enumerate}

\item[(1)] $X$ has the {\bf $E_r$-Hodge Decomposition} property, i.e. for every $k$, there is a $d$-closed representative of any class in $E_r^{p,q}(X)$ and sending such a representative to its de Rham class induces a \textbf{well-defined} isomorphism 
$$\bigoplus_{p+q=k}E_r^{p,\,q}(X)\longrightarrow H^k_{dR}(X,\,\C).$$

\item[(2)] The Fr\"olicher spectral sequence of $X$ {\bf degenerates at $E_r$} and the Hodge filtration induces a pure Hodge structure on the de Rham cohomology in every degree.

\item[(3)] The double complex $(\bigoplus_{p,q\in\Z}\mathcal{C}^\infty_{p,\, q}(X),\partial,\bar\partial)$ of smooth, complex-valued differential forms is isomorphic to a direct sum of indecomposable double complexes of the following types: {\bf squares}, namely double complexes of the shape 
\[
\begin{tikzcd}
	\langle\delbar a\rangle\ar[r]&\langle\delbar\del  a\rangle\\
	\langle a\rangle\ar[u]\ar[r]&\langle\del a\rangle\ar[u];
\end{tikzcd}
%\qquad\langle a\rangle,\quad
\]

{\bf dots}, namely double complexes of the shape $\langle a\rangle$; and {\bf zigzags} of even length $\leq 2(r-1)$.

\item[(4)] There is an equality
\[
h_{BC}(X)=\sum_{i=1}^{r-1}e_i(X)-(r-2)b(X),
\]
where $h_{BC}(X):=\sum_{p,q\in\Z}\dim H_{BC}^{p,q}(X)$ is the total Bott-Chern cohomology dimension, $e_i(X):=\sum_{p,q}\dim E_i^{p,q}(X)$ is the total dimension of the $i$-th page of the Fr\"olicher spectral sequence and $b(X):=\sum_{i\in\Z}=b_i(X)$ is the total Betti-number.
\end{enumerate}
\vspace{1ex}
A compact complex manifold X that satisfies any of these equivalent conditions is said to be a {\bf page-$(r-1)$-$\partial\bar\partial$-manifold}.

\end{The-Def}

\vspace{2ex}

For example, when they are of lengths $2$ and $4$, the zigzags mentioned under $(3)$ in the above statement are of the following types: \[
\begin{tikzcd}
	\langle a\rangle \ar[r]&\langle \del a\rangle,
\end{tikzcd}\qquad
\begin{tikzcd}
	\langle\bar\partial a\rangle\\
	\langle a\rangle\ar[u],
\end{tikzcd}\qquad
\begin{tikzcd}
	\langle a_1\rangle\ar[r]&\langle\partial a_1\rangle&\\
	&\langle a_2\rangle\ar[u]\ar[r]&\langle \partial a_2\rangle,
\end{tikzcd}\qquad
\begin{tikzcd}
\langle \bar\partial a_1\rangle&\\
\langle a_1\rangle\ar[r]\ar[u]&\langle \bar\partial a_2\rangle\\
&\langle a_2\rangle\ar[u].
\end{tikzcd}
\]

In all these diagrams, $a$ and $a_i$ are non-zero elements of pure bidegrees, all drawn arrows are supposed to be isomorphisms and all omitted arrows are zero.

Note that the pure-Hodge condition of the de Rham cohomology imposes topological restrictions on the underlying manifold: as in the K\"ahler case, the odd-degree Betti numbers $b_{2k+1}(X)$ of page-$r$-$\partial\bar\partial$-manifolds have to be even.
\vspace{1ex}

Condition $(4)$ is actually the equality case of the following general inequality, proved in section \ref{section:page-r}.

\begin{The}\label{The:page-1-ddbar_numerical-char} Let $X$ be a compact complex manifold with $\mbox{dim}_\C X=n$. Then, for any $r\in \mathbb{N}^\star$, the following inequality holds
	\[
	h_{BC}(X)\geq \sum_{i=1}^{r-1} e_i(X)-(r-2)b(X),
	\]	
	with equality if and only if $X$ is a page-$(r-1)$-$\partial\bar\partial$-manifold.
\end{The}
Note that one can rewrite this inequality to obtain a lower bound for the total Betti number (a topological quantity) in terms of analytic invariants.

The statement for $r=1$ (i.e. $h_{BC}(X)\geq b(X)$) and for $r=2$ (i.e. $h_{BC}(X)\geq h_{\delbar}(X)$) were known (see [8]) but the characterisation of the equality is new for $r=2$. For $r\geq 3$, both the inequality and the characterisation of the equality are new.\\

In section \ref{subsection: examples}, we provide {\bf examples} of {\it primary} (i.e. not derived from others of their kind) {\bf page-$r$-$\partial\bar\partial$-manifolds} that are not page-$0$-$\partial\bar\partial$-manifolds (i.e. not $\partial\bar\partial$-manifolds in the usual sense).
We produce three different classes of such examples with $r=1$: 
 
 \begin{enumerate}
\item[(1)] all the {\bf complex parallelisable nilmanifolds} (see Theorem \ref{The:nilmanifolds_c-par}). These are not $\partial\bar\partial$-manifolds unless they are complex tori. This class of manifolds includes the complex $3$-dimensional Iwasawa manifold, but is far wider;
\item[(2)] two families of \textbf{nilmanifolds with abelian complex structures} with members of arbitrarily high dimensions. (See Theorem \ref{AbelianNilmanifolds} and Proposition \ref{Prop:clasif-nilmanifolds}.) In some sense, these form the opposite of the above class (1) among nilmanifolds (see Remarks \ref{Dolb-A1-A2} and \ref{sGG-A1-A2}); 
\item[(3)] the {\bf Nakamura solvmanifolds} (which are not nilmanifolds) considered in [29] and [4]. (See Corollary \ref{Cor:A-K_Nakamura}.)
\end{enumerate}

In the subsequent work [20] by H. Kasuya and the second-named author, examples (1) and (3) were generalised; in particular, all complex parallelisable solvmanifolds were shown to be page-$1$-$\del\delbar$-manifolds.\\

In section \ref{section:geom-prop_page-r-ddbar}, we study the behaviour of page-$r$-$\partial\bar\partial$-manifolds under standard geometric operations. In particular, we obtain construction methods for new examples from given ones. These include:

\vspace{1ex}

(4)\, {\bf products} of page-$r_i$-$\partial\bar\partial$-manifolds, with possibly different $r_i$'s;

\vspace{1ex}

(5)\, {\bf blow-ups} of page-$r_1$-$\partial\bar\partial$-submanifolds of page-$r_2$-$\partial\bar\partial$-manifolds, with possibly different $r_i$'s;

\vspace{1ex}

(6)\, the {\bf projectivised bundle} $\Proj(\mathcal{V})$ of any holomorphic vector bundle $\mathcal{V}$ on a page-$r$-$\partial\bar\partial$-manifold;

\vspace{1ex}

(7)\, {\bf small deformations} of a page-$1$-$\partial\bar\partial$-manifold with fixed Hodge numbers.

\section{Page-$r$-$\partial\bar\partial$-manifolds}\label{section:page-r-dd-bar} In this section, we give the main definitions and some of the basic properties of the new class of manifolds that we introduce herein. Unless otherwise stated, $X$ will stand for an $n$-dimensional compact complex manifold. We refer to [17] for the setup of the Fr\"olicher spectral sequence and [13] for a hands-on description of the groups appearing on the higher pages.

\subsection{Preliminaries}\label{subsection:preliminaries} We start by recalling some well-known facts in order to fix the setup and to spell out the relationship between the Fr\"olicher spectral sequence and the (filtered) de Rham cohomology when the latter is {\it pure} in a sense that will be specified.\\

For all non-negative integers $k\leq 2n$ and $p\leq\min\{k,\,n\}$, it is standard to put \begin{equation}\label{eqn:def_filtration-forms}{\cal F}^p C^\infty_k(X,\,\C):=\bigoplus\limits_{i\geq p}C^\infty_{i,\,k-i}(X)\subset C^\infty_k(X,\,\C)\end{equation}

\noindent and get a filtration of $C^\infty_k(X,\,\C)$ for every $k$: \begin{equation}\label{eqn:filtration-forms}\{0\}\subset\dots\subset {\cal F}^{p+1}C^\infty_k(X,\,\C)\subset {\cal F}^pC^\infty_k(X,\,\C)\subset\dots\subset C^\infty_k(X,\,\C).\end{equation}

It is equally standard to put $$F^pH^k_{dR}(X,\,\C):=\frac{{\cal F}^p C^\infty_k(X,\,\C)\cap\ker d}{{\cal F}^p C^\infty_k(X,\,\C)\cap\mbox{im}\,d}\subset H^k_{dR}(X,\,\C),$$

\noindent the subspace of de Rham cohomology classes of degree $k$ that are representable by forms in ${\cal F}^p C^\infty_k(X,\,\C)$, to get a filtration of $H^k_{dR}(X,\,\C)$ for every $k$:

\begin{equation}\label{eqn:Hk-filtration}\{0\}\subset\dots\subset F^{p+1}H^k_{dR}(X,\,\C)\subset F^pH^k_{dR}(X,\,\C)\subset\dots\subset H^k_{dR}(X,\,\C).\end{equation}

\vspace{2ex}

Let us now recall the following standard result (see e.g. [17, Lemma 2]).

\begin{The}\label{The:standard_E_infty-filtration} Let $X$ be an $n$-dimensional compact complex manifold. For every $p,q\in\{0,\dots , n\}$, the vector space $E_\infty^{p,\,q}(X)$ of type $(p,\,q)$ on the degenerating page of the Fr\"olicher spectral sequence of $X$ is {\bf naturally isomorphic} to the graded module associated with the filtration (\ref{eqn:Hk-filtration}): $$E_\infty^{p,\,q}(X) \simeq G_pH^{p+q}_{dR}(X,\,\C):=\frac{F^pH^{p+q}_{dR}(X,\,\C)}{F^{p+1}H^{p+q}_{dR}(X,\,\C)},$$

  \noindent where the isomorphism $G_pH^{p+q}_{dR}(X,\,\C)\simeq E_\infty^{p,\,q}(X)$ is induced by the projection $$F^pH^{p+q}_{dR}(X,\,\C)\ni\bigg\{\sum\limits_{i\geq p}u^{i,\,p+q-i}\bigg\}_{dR}\mapsto\{u^{p,\,q}\}_{E_\infty}\in E_\infty^{p,\,q}(X).$$

\end{The}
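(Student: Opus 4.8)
Theorem \ref{The:standard_E_infty-filtration} is the standard convergence statement for the spectral sequence of the bounded filtered complex $({\cal F}^\bullet C^\infty_\bullet(X,\C),\,d)$, so one legitimate option is simply to invoke the general homological algebra, as in [Dem97]. Instead I would give a hands-on argument using the explicit description of the Fr\"olicher spectral sequence recorded in Proposition \ref{Prop:E_r-closed-exact_conditions}, since all the needed ingredients are already in place. The starting observation is that, $X$ being compact of dimension $n$, the filtration (\ref{eqn:filtration-forms}) has finite length and $d_r=0$ for $r>n$; hence the spectral sequence degenerates at a finite page $N\leq n+1$, so that $E_\infty^{p,\,q}(X)=E_N^{p,\,q}(X)$ is, by Proposition \ref{Prop:E_r-closed-exact_conditions}, represented by $(p,q)$-forms $\alpha$ with $\bar\partial\alpha=0$ whose $\partial\alpha$ runs $(N-1)$ times, modulo $E_N$-exact forms.

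I would first check that the stated map is well defined on $F^pH^{p+q}_{DR}(X,\C)$ and factors through $G_pH^{p+q}_{DR}(X,\C)$. Given a $d$-closed $u=\sum_{i\geq p}u^{i,\,p+q-i}\in{\cal F}^pC^\infty_{p+q}(X,\C)$, comparing the bidegree $(p+j,\,q-j+1)$ components of $du=0$ for $j=0,1,2,\dots$ yields $\bar\partial u^{p,\,q}=0$ (using $u^{p-1,\,q+1}=0$) and $\partial u^{p+j,\,q-j}=-\bar\partial u^{p+j+1,\,q-j-1}$; up to signs this is exactly the tower of Proposition \ref{Prop:E_r-closed-exact_conditions}(i), so $u^{p,\,q}$ is $E_r$-closed for every $r$. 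Replacing $u$ by $u+dv$ with $v\in{\cal F}^pC^\infty_{p+q-1}(X,\C)$ changes $u^{p,\,q}$ by $(dv)^{p,\,q}=\bar\partial v^{p,\,q-1}$ (the $\partial v^{p-1,\,q}$ term vanishes since $v\in{\cal F}^p$), which is $\bar\partial$-exact, hence $E_r$-exact for all $r\geq 1$; thus $\{u^{p,\,q}\}_{E_\infty}$ depends only on $\{u\}_{DR}$. Since a class in $F^{p+1}H^{p+q}_{DR}(X,\C)$ admits a representative in ${\cal F}^{p+1}$, whose $(p,q)$-component is zero, the map descends to $G_pH^{p+q}_{DR}(X,\C)$.

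For surjectivity, let $\alpha$ represent a class in $E_\infty^{p,\,q}(X)$; applying Proposition \ref{Prop:E_r-closed-exact_conditions}(i) for arbitrarily large $r$ (possible by degeneration) produces forms $u_l\in C^\infty_{p+l,\,q-l}(X)$ with $\bar\partial\alpha=0$, $\partial\alpha=\bar\partial u_1$, $\partial u_l=\bar\partial u_{l+1}$; this tower closes up automatically once $p+l=n$, since then $\partial u_l$ is an $(n+1,\,\cdot)$-form, so the alternating sum $\tilde u:=\alpha-u_1+u_2-\cdots$ is a finite form lying in ${\cal F}^p$, is $d$-closed by telescoping, and satisfies $\tilde u^{p,\,q}=\alpha$, whence $\{\alpha\}_{E_\infty}$ is the image of $\{\tilde u\}_{DR}$. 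For injectivity, suppose $\{u^{p,\,q}\}_{E_\infty}=0$, so $u^{p,\,q}$ is $E_N$-exact; by Proposition \ref{Prop:E_r-closed-exact_conditions}(iii), $u^{p,\,q}=\partial\zeta+\bar\partial\xi$ with $\zeta$ satisfying the exactness tower. One then subtracts from $u$ a suitable $d$-exact form, assembled from $\zeta$, $\xi$ and the $v_k$'s in that tower, chosen so that $u-dw$ still lies in ${\cal F}^p$, is still $d$-closed, and has vanishing $(p,q)$-component; the interlocking closedness tower for $u$ and exactness tower for $u^{p,\,q}$ are precisely what make this correction possible. Repeating the reduction in increasing holomorphic degree pushes the representative into ${\cal F}^{p+1}$, so $\{u\}_{DR}\in F^{p+1}H^{p+q}_{DR}(X,\C)$ and the class in $G_pH^{p+q}_{DR}(X,\C)$ is zero.

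The main obstacle is the bookkeeping in the injectivity step: matching the indices and signs of the two towers (closedness of $u$ versus exactness of $u^{p,\,q}$), verifying that each cancellation of a bidegree component leaves the corrected form inside ${\cal F}^p$ and $d$-closed, and confirming that this staircase reduction terminates. This is exactly the content of the convergence theorem for a bounded filtered complex, which is why a direct reference to [Dem97] is a legitimate shortcut.
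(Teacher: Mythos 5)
The paper offers no proof of this statement at all: it is introduced with ``Let us now recall the following standard result (see e.g. [Dem97, chapter IV, Theorem 10.6])'', i.e. it is quoted as the general convergence theorem for the spectral sequence of a (bounded) filtered complex. Your proposal therefore takes a genuinely different route: you unfold that abstract statement concretely using the Cordero--Fern\'andez--Gray--Ugarte description of $E_r$-closedness and $E_r$-exactness from Proposition \ref{Prop:E_r-closed-exact_conditions}. This buys something real in the context of this paper -- it makes visible why $d$-closed forms in ${\cal F}^p$ produce $E_r$-closed $(p,q)$-components (the bidegree components of $du=0$ \emph{are} the closedness tower) and why the tower closes up for bidegree reasons, which is exactly the mechanism exploited later in Proposition \ref{Prop:purity-filtration} and Theorem \ref{The-Def:E_r_Hodge-decomp_pureDR}. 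The surjectivity argument via the alternating sum $\tilde u=\alpha-u_1+u_2-\cdots$ is correct and complete.

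Two points need tightening. First, in the well-definedness step you only allow $u$ to change by $dv$ with $v\in{\cal F}^pC^\infty_{p+q-1}(X,\C)$, but the denominator of $F^pH^{p+q}_{DR}(X,\C)$ as defined in the paper is ${\cal F}^pC^\infty_{p+q}(X,\C)\cap\mbox{Im}\,d$, so the potential $v$ need not lie in ${\cal F}^p$; then $(dv)^{p,\,q}=\partial v^{p-1,\,q}+\bar\partial v^{p,\,q-1}$ with a genuinely nonzero $\partial$-term. The fix is the one you already use elsewhere: the vanishing of the components of $dv$ of holomorphic degree $<p$ forces $\bar\partial v^{p-1,\,q}=-\partial v^{p-2,\,q+1}$, etc., so $\bar\partial v^{p-1,\,q}$ reaches $0$ in at most $p$ steps and $(dv)^{p,\,q}$ is $E_r$-exact for large $r$ by part (iii) of Proposition \ref{Prop:E_r-closed-exact_conditions}. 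Second, your injectivity step is only a sketch (``subtract a suitable $d$-exact form\dots repeat''); as you candidly note, carrying out that staircase is precisely the content of the cited convergence theorem, so in the end your argument still leans on [Dem97] for the one step that contains the real bookkeeping -- which is no worse than what the paper itself does, but means the proposal is a complete replacement for the citation only once that reduction is written out.
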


\vspace{2ex}

The following statement is immediate to prove.

\begin{Lem} The following relations hold:  \begin{eqnarray}\label{eqn:forms_complementarity}C^\infty_k(X,\,\C) & = & {\cal F}^p C^\infty_k(X,\,\C)\oplus\overline{{\cal F}^{k-p+1} C^\infty_k(X,\,\C)} \hspace{6ex} \mbox{for all}  \hspace{2ex} 0\leq p\leq \min\{k,\,n\}; \\
\label{eqn:forms_intersection} C^\infty_{p,\,q}(X) & = & {\cal F}^p C^\infty_k(X,\,\C)\cap\overline{{\cal F}^q C^\infty_k(X,\,\C)} \hspace{6ex} \mbox{for all}  \hspace{2ex} p,q \hspace{1ex} \mbox{such that}  \hspace{1ex} p+q=k.\end{eqnarray}

\end{Lem}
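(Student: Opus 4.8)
The plan is to prove the two identities by a direct inspection of bidegrees, since each is an essentially formal consequence of the definition (\ref{eqn:def_filtration-forms}).

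First I would treat (\ref{eqn:forms_complementarity}). Unravelling the definition, $\mathcal{F}^p C^\infty_k(X,\C)=\bigoplus_{i\geq p}C^\infty_{i,\,k-i}(X)$, while $\overline{\mathcal{F}^{k-p+1}C^\infty_k(X,\C)}$ is the complex conjugate of $\bigoplus_{j\geq k-p+1}C^\infty_{j,\,k-j}(X)$; since conjugation swaps holomorphic and antiholomorphic degrees, this conjugate equals $\bigoplus_{j\geq k-p+1}C^\infty_{k-j,\,j}(X)$. Re-indexing by the holomorphic degree $i=k-j$, the condition $j\geq k-p+1$ becomes $i\leq p-1$, so $\overline{\mathcal{F}^{k-p+1}C^\infty_k(X,\C)}=\bigoplus_{i\leq p-1}C^\infty_{i,\,k-i}(X)$. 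Thus $\mathcal{F}^p C^\infty_k$ collects exactly the summands with $i\geq p$ and its partner collects exactly those with $i\leq p-1$; together they exhaust the full direct sum decomposition $C^\infty_k(X,\C)=\bigoplus_{i=0}^{k}C^\infty_{i,\,k-i}(X)$ into bidegree components, and the two index ranges are disjoint, so the sum is direct. One should only note the mild bookkeeping that the components $C^\infty_{i,\,k-i}(X)$ are nonzero precisely for $0\leq i\leq k$ with $i\leq n$ and $k-i\leq n$; this does not affect the argument, since we are just partitioning whatever components occur according to whether $i\geq p$ or $i\leq p-1$, and the constraint $0\leq p\leq\min\{k,n\}$ guarantees both ranges are within the admissible window (in particular $p\leq n$ keeps the second conjugate-filtration index $k-p+1$ in the range where the lemma's bidegree conventions make sense).

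For (\ref{eqn:forms_intersection}) the argument is the same kind of bidegree count: $\mathcal{F}^p C^\infty_k(X,\C)=\bigoplus_{i\geq p}C^\infty_{i,\,k-i}(X)$ picks out the components with holomorphic degree at least $p$, while by the same conjugation computation as above $\overline{\mathcal{F}^q C^\infty_k(X,\C)}=\bigoplus_{i\leq k-q}C^\infty_{i,\,k-i}(X)$ picks out those with holomorphic degree at most $k-q$. When $p+q=k$ these two conditions read $i\geq p$ and $i\leq p$, so the intersection is the single summand $C^\infty_{p,\,q}(X)$, as claimed.

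I do not expect any genuine obstacle here: the statement is labelled ``immediate to prove'' and the only thing to be careful about is getting the conjugation-versus-reindexing step right (conjugation sends $C^\infty_{a,\,b}$ to $C^\infty_{b,\,a}$, so the filtration index on the conjugated space refers to what was originally the \emph{anti}holomorphic degree). Once that is set up, both identities drop out of comparing index sets of direct summands, and there is nothing analytic to verify.
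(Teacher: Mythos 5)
Your argument is correct and is exactly the intended one: the paper offers no proof (the lemma is declared ``immediate to prove''), and your bidegree bookkeeping --- conjugation sending $C^\infty_{a,\,b}$ to $C^\infty_{b,\,a}$, followed by re-indexing and comparing the index sets $i\geq p$ versus $i\leq p-1$ (resp.\ $i\leq k-q$) --- is the standard verification. Nothing is missing.
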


\vspace{2ex}

On the other hand, for all $p,q\in\{0,\dots , n\}$, let us consider the following space of de Rham cohomology classes of degree $p+q$ that are representable by pure-type $(p,\,q)$-forms: \[H^{p,\,q}_{dR}(X):=\bigg\{\mathfrak{c}\in H^{p+q}_{dR}(X,\,\C)\,\mid\,\exists\alpha\in C^\infty_{p,\,q}(X)\text{ s.t. }[\alpha]=\mathfrak{c}\bigg\}\subset H^{p+q}_{dR}(X,\,\C).\]

This definition makes it obvious that the analogue of the Hodge symmetry for the spaces $H^{p,\,q}_{dR}(X)$ always holds. In other words, the conjugation induces an isomorphism

\begin{equation}\label{Def:DR_Hodge-sym}H^{p,\,q}_{dR}(X)\in\{\alpha\}_{dR}\mapsto \overline{\{\bar\alpha\}}_{dR}\in\overline{H^{q,\,p}_{dR}(X)}  \hspace{6ex} \mbox{for all}  \hspace{2ex} 0\leq p,q\leq n.\end{equation}

The following analogue in cohomology of identity (\ref{eqn:forms_intersection}), resp. of one of the inclusions defining the filtration (\ref{eqn:def_filtration-forms}) of $C^\infty_k(X,\,\C)$, can be immediately proved to hold.

\begin{Lem} The following relations hold:  \begin{eqnarray}\label{eqn:cohomology_complementarity_1st-inclusion}H^{p,\,q}_{dR}(X) = F^pH^k_{dR}(X,\,\C)\cap\overline{F^qH^k_{dR}(X,\,\C)} & & \hspace{6ex} \mbox{for all}  \hspace{2ex} p,q \hspace{1ex} \mbox{such that}  \hspace{1ex} p+q=k; \\
\label{eqn:purity-filtration_one-inclusion}H^{i,\,k-i}_{dR}(X) \subset F^pH^k_{dR}(X,\,\C) & & \hspace{6ex} \mbox{for all}  \hspace{2ex} i\geq p \hspace{1ex} \mbox{and all} \hspace{2ex} p\leq k.   \end{eqnarray}  

\end{Lem}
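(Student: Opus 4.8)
The plan is to prove both relations by unwinding the definitions of the spaces involved and appealing to the already-established form-level identities. For \eqref{eqn:purity-filtration_one-inclusion}, the inclusion $H^{i,\,k-i}_{DR}(X)\subset F^pH^k_{DR}(X,\,\C)$ for $i\geq p$ is almost immediate: a class in $H^{i,\,k-i}_{DR}(X)$ has, by definition, a representative $\alpha\in C^\infty_{i,\,k-i}(X)$, and since $i\geq p$ we have $C^\infty_{i,\,k-i}(X)\subset{\cal F}^pC^\infty_k(X,\,\C)$ by \eqref{eqn:def_filtration-forms}; a $d$-closed form lying in ${\cal F}^pC^\infty_k(X,\,\C)$ represents, by the description of $F^pH^k_{DR}(X,\,\C)$ recalled above, a class in $F^pH^k_{DR}(X,\,\C)$. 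This settles \eqref{eqn:purity-filtration_one-inclusion}.

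For \eqref{eqn:cohomology_complementarity_1st-inclusion}, I would prove the two inclusions separately. The inclusion $H^{p,\,q}_{DR}(X)\subset F^pH^k_{DR}(X,\,\C)\cap\overline{F^qH^k_{DR}(X,\,\C)}$ follows from \eqref{eqn:purity-filtration_one-inclusion} applied twice: $H^{p,\,q}_{DR}(X)\subset F^pH^k_{DR}(X,\,\C)$ is the case $i=p$, while $H^{p,\,q}_{DR}(X)=\overline{H^{q,\,p}_{DR}(X)}$ (using the Hodge-symmetry isomorphism \eqref{Def:DR_Hodge-sym}, or rather directly: conjugation sends a $(p,q)$-representative to a $(q,p)$-representative of the conjugate class) together with $H^{q,\,p}_{DR}(X)\subset F^qH^k_{DR}(X,\,\C)$ gives $H^{p,\,q}_{DR}(X)\subset\overline{F^qH^k_{DR}(X,\,\C)}$. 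The reverse inclusion is the substantive point: given a class $\mathfrak{c}\in F^pH^k_{DR}(X,\,\C)\cap\overline{F^qH^k_{DR}(X,\,\C)}$, I must produce a pure-type $(p,q)$-representative. Since $\mathfrak{c}\in F^pH^k_{DR}$, there is a $d$-closed representative $u\in{\cal F}^pC^\infty_k(X,\,\C)$, say $u=\sum_{i\geq p}u^{i,k-i}$; since $\mathfrak{c}\in\overline{F^qH^k_{DR}}$, there is a $d$-closed representative $w\in\overline{{\cal F}^qC^\infty_k(X,\,\C)}$, so $w\in\sum_{j\geq q}C^\infty_{k-j,\,j}(X)=\sum_{i\leq k-q=p}C^\infty_{i,k-i}(X)$, i.e. $w$ has pure-type components only in holomorphic degrees $\leq p$. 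Then $u-w=d\eta$ for some $\eta$, and $u-w\in{\cal F}^pC^\infty_k(X,\,\C)+\overline{{\cal F}^qC^\infty_k(X,\,\C)}$; I want to conclude that $\mathfrak{c}$ is represented by the common piece $u^{p,q}$.

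The cleanest way to run the last step is via the complementarity identity \eqref{eqn:forms_complementarity}: $C^\infty_k(X,\,\C)={\cal F}^pC^\infty_k(X,\,\C)\oplus\overline{{\cal F}^{k-p+1}C^\infty_k(X,\,\C)}$, with $\overline{{\cal F}^{k-p+1}C^\infty_k(X,\,\C)}=\sum_{i\leq p-1}C^\infty_{i,k-i}(X)$. Write $\eta=\eta'+\eta''$ along this direct sum, so $\eta'\in{\cal F}^p$ collects holomorphic degrees $\geq p$ and $\eta''$ collects degrees $\leq p-1$. Comparing bidegree components of $u-w=d\eta=d\eta'+d\eta''$: the components of $u$ in holomorphic degrees $>p$ plus $u^{p,q}$ come from the $\geq p$ part, the components of $w$ in degrees $<p$ from the $\leq p-1$ part, and in holomorphic degree exactly $p$ we read off $u^{p,q}-w^{p,q}=(d\eta)^{p,q}$, where $(d\eta)^{p,q}$ splits as $\bar\partial(\eta')^{p,q-1}+\partial(\eta'')^{p-1,q}$ with both $(\eta')^{p,q-1}$ and $(\eta'')^{p-1,q}$ genuine pure-type forms. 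Hence $u^{p,q}$ differs from $w^{p,q}$ by a $d$-exact term, and since $w^{p,q}$ in turn represents $\mathfrak{c}$ (because $w-w^{p,q}$ has holomorphic degrees $<p$ and one checks it is $d$-exact by the analogous bidegree bookkeeping on $dw=0$, or more simply because $w^{p,q}$ is the image of $\mathfrak{c}$ under the canonical projection of Theorem \ref{The:standard_E_infty-filtration}-type reasoning), we get that $u^{p,q}\in C^\infty_{p,q}(X)$ is a representative of $\mathfrak{c}$, i.e. $\mathfrak{c}\in H^{p,\,q}_{DR}(X)$.

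I expect the bidegree bookkeeping in that final paragraph to be the only delicate part; everything else is formal unwinding of definitions. In writing it up I would likely streamline by first establishing the auxiliary fact that if $d$-closed $u\in{\cal F}^pC^\infty_k$ and $d$-closed $\tilde u\in{\cal F}^pC^\infty_k$ satisfy $u-\tilde u\in\mbox{Im}\,d$, then $u-\tilde u\in\mbox{Im}\,(d:{\cal F}^pC^\infty_{k-1}\to{\cal F}^pC^\infty_k)$ is false in general but the top components still agree in $E_\infty^{p,q}$, and then arguing directly with bidegrees as above rather than invoking $E_\infty$. Alternatively one can phrase the whole thing through the filtered complex $({\cal F}^\bullet C^\infty_\bullet,d)$ and its conjugate and identify $H^{p,q}_{DR}$ as the image of the natural map from the intersection of the two filtrations — but for a ``Lemma'' whose proof the text already advertises as ``immediate,'' the explicit bidegree argument is the right level of detail.
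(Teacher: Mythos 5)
Your treatment of \eqref{eqn:purity-filtration_one-inclusion} and of the inclusion $\subset$ in \eqref{eqn:cohomology_complementarity_1st-inclusion} is fine, and your setup for the reverse inclusion (write $u-w=d\eta$ and split $\eta=\eta'+\eta''$ into holomorphic degrees $\geq p$ and $\leq p-1$) is exactly the paper's. But the final step has a genuine gap. From the bidegree-$(p,q)$ component you get $u^{p,q}-w^{p,q}=\bar\partial(\eta')^{p,q-1}+\partial(\eta'')^{p-1,q}$, and you assert this is $d$-exact. It is only an element of $\mbox{Im}\,\partial+\mbox{Im}\,\bar\partial$; on a general compact complex manifold (which is the setting of this Lemma -- no $\partial\bar\partial$-type hypothesis is available) such a form need not even be $d$-closed: here $d(u^{p,q}-w^{p,q})=\partial u^{p,q}-\bar\partial w^{p,q}$, and $\partial u^{p,q}=-\bar\partial u^{p+1,q-1}$ is generally nonzero. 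For the same reason your final conclusion fails as stated: $u^{p,q}$ is generally \emph{not} $d$-closed, so it cannot be ``a representative of $\mathfrak{c}$''. The auxiliary claim that $w-w^{p,q}$ is $d$-exact ``by bookkeeping on $dw=0$'' is also unjustified; $dw=0$ alone gives no exactness.

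The fix is to not isolate the degree-$p$ component but to sum the components of holomorphic degree $>p$, which is what the paper does. Equating those components in $u-w=d\eta$ gives $u-u^{p,q}=\sum_{i>p}u^{i,k-i}=d\eta'-\bar\partial(\eta')^{p,q-1}$, hence $u=\bigl(u^{p,q}-\bar\partial(\eta')^{p,q-1}\bigr)+d\eta'$. The form $u^{p,q}-\bar\partial(\eta')^{p,q-1}=u-d\eta'$ is $d$-closed, of pure type $(p,q)$, and cohomologous to $u$, so $\mathfrak{c}\in H^{p,q}_{DR}(X)$. Note that the correction term $-\bar\partial(\eta')^{p,q-1}$ is exactly what restores closedness; it is the term your argument drops. (Your detour through $w^{p,q}$ is also unnecessary: once the identity above is in hand, $w$ plays no further role.)
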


\noindent {\it Proof.} Everything is obvious, except perhaps the inclusion ``$\supset$'' in (\ref{eqn:cohomology_complementarity_1st-inclusion}) which can be proved as follows. Let $\{\alpha\}_{dR} = \{\beta\}_{dR}\in F^pH^k_{dR}(X,\,\C)\cap\overline{F^qH^k_{dR}(X,\,\C)}$ with $\alpha=\sum_{i\geq p}\alpha^{i,\,k-i}\in{\cal F}^pC^\infty_k(X,\,\C)$ and $\beta=\sum_{s\leq p}\beta^{s,\,k-s}\in\overline{{\cal F}^qC^\infty_k(X,\,\C)}$. Since $\alpha$ and $\beta$ are de Rham-cohomologous, there exists a form $\sigma\in C^\infty_{k-1}(X,\,\C)$ such that $\alpha-\beta=d\sigma$. This identity implies, after equating the terms with a holomorphic degree $>p$ on either side, the second identity below: $$\alpha - \alpha^{p,\,q}  =\sum_{i> p}\alpha^{i,\,k-i} = d(\sum_{j\geq p}\sigma^{j,\,k-1-j}) - \bar\partial\sigma^{p,\,q-1},$$

\noindent which, in turn, implies that $\{\alpha\}_{dR} = \{\alpha^{p,\,q} - \bar\partial\sigma^{p,\,q-1}\}_{dR}$. Since $\alpha^{p,\,q} - \bar\partial\sigma^{p,\,q-1}$ is a $(p,\,q)$-form, we get $\{\alpha\}_{dR}\in H^{p,\,q}_{dR}(X)$ and we are done. \hfill $\Box$

\vspace{3ex}

Note that, with no assumption on $X$, the subspaces $H^{i,\,k-i}_{dR}(X)$ may have non-zero mutual intersections inside $H^k_{dR}(X,\,\C)$, i.e. they may not sit in a direct sum. Similarly, they may not fill out the whole space (i.e. their linear span could be a proper subspace). If they do, i.e. if 
\[
H^k_{dR}(X,\,\C) = \bigoplus\limits_{p+q=k}H^{p,\,q}_{dR}(X),
\] then $H^k_{dR}(X,\, \C)$ is said to carry a pure Hodge structure (induced from the Hodge filtration), [14]. We also recall from [14] that purity in degree $k$ is equivalent to the filtrations $F$ and $\bar{F}$ on $H_{dR}^k(X,\,\C)$ being $k$-opposed, which means that the natural map $F^pH_{dR}^k(X,\,\C)\oplus \overline{F^qH_{dR}^k(X,\,\C)}\longrightarrow H_{dR}^k(X,\,\C)$ is an isomorphism whenever $p+q=k+1$, or equivalently that $\operatorname{gr}_{F}^p\operatorname{gr}_{\bar F}^q H_{dR}^k(X,\,\C)=0$ whenever $p+q\neq k$.
We now introduce the following shorthand terminology.

\begin{Def}\label{Def:purity} Let $X$ be an $n$-dimensional compact complex manifold. The de Rham cohomology of $X$ is said to be {\bf pure} if the Hodge filtration induces a pure Hodge structure in every degree, i.e. if $$H^k_{dR}(X,\,\C) = \bigoplus\limits_{p+q=k}H^{p,\,q}_{dR}(X)  \hspace{6ex} \mbox{for all}  \hspace{2ex} k\in\{0,\dots , 2n\}.$$
\end{Def}

\begin{Terminology}\label{rem-terminology}

 Some authors call the property of the Hodge filtration inducing a pure Hodge structure in degree $k$ {\bf complex-$\mathcal{C}^\infty$-pure-and-full} in degree $k$ (cf. [25]). It was remarked in [7] that the {\bf complex-$\mathcal{C}^\infty$-full} property {in degree $k$} (i.e. the sum of the $H^{p,\,q}_{dR}(X)$'s is not necessarily direct but it fills out $H^k_{dR}(X,\,\C)$) implies the {\bf complex-$\mathcal{C}^\infty$-pure} property {in degree $(2n-k)$} (i.e. the sum of the $H^{p,\,q}_{dR}(X)$'s is direct but it may not fill out $H^{2n-k}_{dR}(X,\,\C)$). We will show further down that the converse is also true, i.e. the {\bf complex-$\mathcal{C}^\infty$-pure} property {in degree $k$} implies the {\bf complex-$\mathcal{C}^\infty$-full} property {in degree $(2n-k)$}. Therefore, compact complex manifolds satisfying either the {\bf complex-$\mathcal{C}^\infty$-full} property or the {\bf complex-$\mathcal{C}^\infty$-pure} property {in every degree $k$} are of {\bf pure} de Rham cohomology in the sense of our Definition~\ref{Def:purity}.

\end{Terminology}

\begin{Prop}\label{Prop:purity-filtration} Suppose $X$ is an $n$-dimensional compact complex manifold. Then its de Rham cohomology is {\bf pure} if and only if 
	
	\begin{eqnarray}\label{eqn:purity-filtration}F^pH^k_{dR}(X,\,\C) = \bigoplus\limits_{i\geq p}H^{i,\,k-i}_{dR}(X) \hspace{6ex} \mbox{for all}  \hspace{2ex} p\leq k.\end{eqnarray}

  \noindent In particular, for pure $X$, the spaces $E_\infty^{p,\,q}(X)$ in the Fr\"olicher spectral sequence of $X$ are given by 
  
  \begin{equation}\label{eqn:E_infty_pure-DR}E_\infty^{p,\,q}(X) \simeq H^{p,\,q}_{dR}(X) \hspace{6ex} \mbox{for all}  \hspace{2ex} p,q\in\{0,\dots , n\},\end{equation}

  \noindent where $\simeq$ stands for the natural isomorphism induced by the identity.

\end{Prop}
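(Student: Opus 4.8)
The plan is to prove the two assertions --- the description of the filtration $F^pH^k_{DR}(X,\,\C)$ and the resulting identification of $E_\infty^{p,\,q}(X)$ --- in that order, deducing the second from the first together with Theorem~\ref{The:standard_E_infty-filtration}.

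First I would establish the inclusion ``$\supset$'' in (\ref{eqn:purity-filtration}): this is just (\ref{eqn:purity-filtration_one-inclusion}), which says $H^{i,\,k-i}_{DR}(X)\subset F^pH^k_{DR}(X,\,\C)$ for all $i\geq p$, together with the fact (a consequence of purity in degree $k$) that the subspaces $H^{i,\,k-i}_{DR}(X)$ sit in a direct sum inside $H^k_{DR}(X,\,\C)$, so their span over $i\geq p$ is the asserted direct sum and lies in $F^pH^k_{DR}(X,\,\C)$. The reverse inclusion ``$\subset$'' is the heart of the matter. Let $\mathfrak{c}\in F^pH^k_{DR}(X,\,\C)$; by purity in degree $k$ we may write $\mathfrak{c}=\sum_{i=0}^{k}\mathfrak{c}_i$ with $\mathfrak{c}_i\in H^{i,\,k-i}_{DR}(X)$ uniquely determined, and the goal is to show $\mathfrak{c}_i=0$ for $i<p$. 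Pick a representative $\alpha=\sum_{i\geq p}\alpha^{i,\,k-i}\in{\cal F}^pC^\infty_k(X,\,\C)$ of $\mathfrak{c}$ and, for each $i$, a pure-type representative $\gamma_i\in C^\infty_{i,\,k-i}(X)$ of $\mathfrak{c}_i$. Then $\alpha-\sum_i\gamma_i=d\sigma$ for some $\sigma$, and comparing bidegrees $<p$ in holomorphic degree on both sides (where $\alpha$ contributes nothing) gives relations expressing each $\gamma_i$ with $i<p$, modulo $d$-exact forms, in terms of the $(i,k-i)$- and neighbouring components of $d\sigma$; iterating from the lowest holomorphic degree upward shows each such $\gamma_i$ is $d$-cohomologous to $0$, i.e. $\mathfrak{c}_i=0$. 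This is essentially the same bidegree-chasing argument as in the proof of the inclusion ``$\supset$'' in (\ref{eqn:cohomology_complementarity_1st-inclusion}) given just above in the excerpt, run degree by degree rather than in one step, so I expect no genuine obstacle, only bookkeeping; the main care needed is to peel off the components of $d\sigma$ in the correct order so that the conjugation-symmetry of the $H^{p,\,q}_{DR}(X)$ (cf.~(\ref{Def:DR_Hodge-sym})) and the uniqueness from purity are used consistently.

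Granting (\ref{eqn:purity-filtration}), the statement about $E_\infty^{p,\,q}(X)$ follows formally. By Theorem~\ref{The:standard_E_infty-filtration} we have the canonical isomorphism $E_\infty^{p,\,q}(X)\simeq F^pH^k_{DR}(X,\,\C)/F^{p+1}H^k_{DR}(X,\,\C)$ with $k=p+q$, induced by the projection onto the $(p,q)$-component. Substituting (\ref{eqn:purity-filtration}) for both $F^p$ and $F^{p+1}$, the quotient is $\big(\bigoplus_{i\geq p}H^{i,\,k-i}_{DR}(X)\big)\big/\big(\bigoplus_{i\geq p+1}H^{i,\,k-i}_{DR}(X)\big)\simeq H^{p,\,q}_{DR}(X)$, and one checks that the composite of this identification with the isomorphism of Theorem~\ref{The:standard_E_infty-filtration} is exactly the map induced by the identity sending the class of a $d$-closed $(p,q)$-form to its De Rham class --- both maps are ``take the $(p,q)$-component'' --- so the isomorphism $E_\infty^{p,\,q}(X)\simeq H^{p,\,q}_{DR}(X)$ is the canonical one, as claimed. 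The only point worth spelling out is the compatibility of the two canonical maps, which is immediate from their explicit descriptions.
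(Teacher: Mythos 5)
Your overall strategy is sound and the second half of your argument (deducing (\ref{eqn:E_infty_pure-DR}) from (\ref{eqn:purity-filtration}) via Theorem \ref{The:standard_E_infty-filtration}) is exactly what the paper does. The ``$\supset$'' inclusion is also handled identically. The difference, and the place where your write-up is too optimistic, is the ``$\subset$'' inclusion. Comparing bidegrees gives, for $i<p$, the relation $\gamma_i=\partial\sigma^{i-1,\,k-i}+\bar\partial\sigma^{i,\,k-i-1}$, i.e. $\gamma_i\in\mbox{Im}\,\partial+\mbox{Im}\,\bar\partial$. This by itself does \emph{not} imply that $\gamma_i$ is $d$-exact: a $d$-closed pure-type form lying in $\mbox{Im}\,\partial+\mbox{Im}\,\bar\partial$ can have a non-zero De Rham class (think of $\partial_1 a$ in a length-three zigzag of type $L$). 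So the sentence ``iterating from the lowest holomorphic degree upward shows each such $\gamma_i$ is $d$-cohomologous to $0$'' is not bookkeeping; it is the entire content of the step, and purity must be invoked again there. A correct version of your induction runs as follows: at $i=0$ one has $\gamma_0=\bar\partial\sigma^{0,\,k-1}$, and $\gamma_0-d\sigma^{0,\,k-1}=-\partial\sigma^{0,\,k-1}$ is a $d$-closed $(1,\,k-1)$-form, so $\mathfrak{c}_0\in H^{0,\,k}_{DR}(X)\cap H^{1,\,k-1}_{DR}(X)=\{0\}$ by directness of the sum; this also shows $\partial\sigma^{0,\,k-1}$ is $d$-exact, which feeds into the step $i=1$ (where $\mathfrak{c}_1=\{\bar\partial\sigma^{1,\,k-2}\}_{DR}\in H^{1,\,k-1}_{DR}(X)\cap H^{2,\,k-2}_{DR}(X)=\{0\}$), and so on. The conjugation symmetry (\ref{Def:DR_Hodge-sym}) you mention plays no role.

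The paper avoids this induction altogether with a single subtraction: having observed that $\sigma^{r-1,\,k-r}$ and $\sigma^{r,\,k-r-1}$ are $\partial\bar\partial$-closed for $r<p$, it replaces the representative $\sum_r\beta^{r,\,k-r}$ by $\sum_r\beta^{r,\,k-r}-d\big(\sum_{r<p}\sigma^{r,\,k-r-1}\big)=\sum_{r\geq p}\beta^{r,\,k-r}-\partial\sigma^{p-1,\,k-p}$, checks that the extra term $\partial\sigma^{p-1,\,k-p}$ is a $d$-closed $(p,\,k-p)$-form, and concludes directly that $\{\alpha\}_{DR}\in\bigoplus_{i\geq p}H^{i,\,k-i}_{DR}(X)$. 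This one-shot cancellation is cleaner because it uses purity only where it is already built in (the decomposition and the directness of the target sum) and sidesteps the need to prove any individual class vanishes. Either route works, but yours needs the intersection argument above spelled out at each stage to be complete.
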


\noindent {\it Proof.} Equation \eqref{eqn:purity-filtration} for all $p=k$ is just the definition of purity, so it remains to prove the converse. 

Assume $X$ is pure. Inclusion ``$\supset$'' in (\ref{eqn:purity-filtration}) follows at once from (\ref{eqn:purity-filtration_one-inclusion}) and from the de Rham purity assumption. To prove inclusion ``$\subset$'' in (\ref{eqn:purity-filtration}), let $\{\alpha\}_{dR}\in F^pH^k_{dR}(X,\,\C)$ with $\alpha=\sum\limits_{r\geq p}\alpha^{r,\,k-r}\in\ker d$. Since $F^pH^k_{dR}(X,\,\C)\subset H^k_{dR}(X,\,\C) = \oplus_{p+q=k}H^{p,\,q}_{dR}(X)$ (the last identity being due to the purity assumption), there exist pure-type $d$-closed forms $\beta^{r,\,k-r}$ such that $\{\alpha\}_{dR} = \{\sum_{0\leq r\leq k}\beta^{r,\,k-r}\}_{dR}$. Hence, there exists a $(k-1)$-form $\sigma$ such that $\alpha-\sum_{0\leq r\leq k}\beta^{r,\,k-r} = -d\sigma$, which amounts to $$\alpha^{r,\,k-r}-\beta^{r,\,k-r}+\partial\sigma^{r-1,\,k-r}+\bar\partial\sigma^{r,\,k-r-1}=0, \hspace{6ex} r\in\{0,\dots , k\},$$

\noindent with the understanding that $\alpha^{r,\,k-r}=0$ whenever $r<p$.

Therefore, $\beta^{r,\,k-r} = \partial\sigma^{r-1,\,k-r}+\bar\partial\sigma^{r,\,k-r-1}$ whenever $r<p$. Since every $\beta^{r,\,k-r}$ is $d$-closed (hence also $\partial$- and $\bar\partial$-closed), we infer that $\sigma^{r-1,\,k-r}$ and $\sigma^{r,\,k-r-1}$ are $\partial\bar\partial$-closed for every $r<p$. Hence \begin{equation}\label{eqn:purity-filtration_1}\begin{array}{rcl}

\sum\limits_{r=0}^k\beta^{r,\,k-r} - d(\sum\limits_{r<p}\sigma^{r,\,k-r-1}) & = & \sum\limits_{r<p}(\beta^{r,\,k-r} - \partial\sigma^{r-1,\,k-r} - \bar\partial\sigma^{r,\,k-r-1}) - \partial\sigma^{p-1,\,k-p} + \sum\limits_{r\geq p}^k\beta^{r,\,k-r}\\
   & = & \sum\limits_{r\geq p}\beta^{r,\,k-r} - \partial\sigma^{p-1,\,k-p}.\end{array}
\end{equation}

\noindent Note that from the identity $\beta^{p-1,\,k-p+1} = \partial\sigma^{p-2,\,k-p+1}+\bar\partial\sigma^{p-1,\,k-p}$ and the $d$-closedness of $\beta^{p-1,\,k-p+1}$ we infer that $\partial\sigma^{p-1,\,k-p}\in\ker d$.

Thus, (\ref{eqn:purity-filtration_1}) shows that the $k$-form $\sum_{r\geq p}\beta^{r,\,k-r} - \partial\sigma^{p-1,\,k-p}\in{\cal F}^pC^\infty_k(X,\,\C)$, whose all pure-type components are $d$-closed, is de Rham-cohomologous to $\sum_{0\leq r\leq k}\beta^{r,\,k-r}$, hence to $\alpha$. Consequently, we have $$\{\alpha\}_{dR} = \bigg\{\sum\limits_{r\geq p}\beta^{r,\,k-r} - \partial\sigma^{p-1,\,k-p}\bigg\}_{dR}\in\bigoplus\limits_{i\geq p}H^{i,\,k-i}_{dR}(X).$$

\noindent The proof of (\ref{eqn:purity-filtration}) is complete.

Identity (\ref{eqn:E_infty_pure-DR}) follows at once from (\ref{eqn:purity-filtration}) and from Theorem \ref{The:standard_E_infty-filtration}.   \hfill $\Box$

\subsection{Definition of page-$r$-$\partial\bar\partial$-manifolds}\label{subsection:page-r-dd-bar_def} Recall that $X$ is a fixed $n$-dimensional compact complex manifold and $E_r^{p,\,q}(X)$ stands for the space of bidegree $(p,\,q)$ on the $r$-th page of the Fr\"olicher spectral sequence of $X$.

\begin{Def}\label{Def:id_isom_decomp} Fix $r\in\N^\star$ and $k\in\{0,\dots , 2n\}$. We say that the {\bf identity induces an isomorphism} between $\oplus_{p+q=k}E_r^{p,\,q}(X)$ and $H^k_{dR}(X,\,\C)$ if the following two conditions are satisfied:

\begin{enumerate}
\item[(1)] for every bidegree $(p,\,q)$ with $p+q=k$, every class $\{\alpha^{p,\,q}\}_{E_r}\in E_r^{p,\,q}(X)$ contains a {\bf $d$-closed representative of pure type} $\alpha^{p,\,q}\in C^\infty_{p,\,q}(X)$ ;

\item[(2)] the linear map $$\bigoplus_{p+q=k}E_r^{p,\,q}(X)\ni\sum\limits_{p+q=k}\{\alpha^{p,\,q}\}_{E_r}\mapsto\bigg\{\sum\limits_{p+q=k}\alpha^{p,\,q}\bigg\}_{dR}\in H^k_{dR}(X,\,\C),$$

  \noindent defined using only $d$-closed pure-type representatives $\alpha^{p,\,q}$ of the classes $\{\alpha^{p,\,q}\}_{E_r}$, whose existence is guaranteed by condition $(1)$, is {\bf well-defined}  and {\bf bijective}. Here, well-defined means that it does not depend on the choice of the $d$-closed representatives.

\end{enumerate}

 Moreover, if, for a fixed $r\in\N^\star$, the identity induces an isomorphism $\oplus_{p+q=k}E_r^{p,\,q}(X)\simeq H^k_{dR}(X,\,\C)$ for every $k\in\{0,\dots , 2n\}$, we say that the manifold $X$ has the {\bf $E_r$-Hodge Decomposition} property.

\end{Def}

\vspace{2ex}

Note that whenever the identity induces a well-defined (not necessarily injective) linear map $E_r^{p,\,q}(X)\longrightarrow H^k_{dR}(X,\,\C)$, the image of this map is $H^{p,\,q}_{dR}(X)$. Indeed, one inclusion is obvious. The reverse inclusion follows from the observation that any $d$-closed $(p,\,q)$-form defines an $E_r$-cohomology class (i.e. it is $E_r$-closed in the terminology of [32]). Further note that whenever $X$ has the $E_r$-Hodge Decomposition property, the Fr\"olicher spectral sequence of $X$ degenerates at $E_r$ (at the latest).

 \begin{Def}\label{Def:conj_isom_sym} Fix $r\in\N^\star$ and $p,q\in\{0,\dots , n\}$. We say that the {\bf conjugation induces an isomorphism} between $E_r^{p,\,q}(X)$ and the conjugate of $E_r^{q,\,p}(X)$ if the following two conditions are satisfied:
\begin{enumerate}
\item[(1)]
every class $\{\alpha^{p,\,q}\}_{E_r}\in E_r^{p,\,q}(X)$ contains a {\bf $d$-closed representative of pure type} $\alpha^{p,\,q}\in C^\infty_{p,\,q}(X)$;

\item[(2)] the linear map $$E_r^{p,\,q}(X)\ni\{\alpha^{p,\,q}\}_{E_r}\mapsto\overline{\{\overline{\alpha^{p,\,q}}\}_{E_r}}\in \overline{E_r^{q,\,p}(X)}$$

  \noindent is {\bf well-defined} (in the sense that it does not depend on the choice of $d$-closed representative $\alpha^{p,\,q}$ of the class $\{\alpha^{p,\,q}\}_{E_r}$) and {\bf bijective}.

\end{enumerate}

  Moreover, if, for a fixed $r\in\N^\star$, the conjugation induces an isomorphism $E_r^{p,\,q}(X)\simeq\overline{E_r^{q,\,p}(X)}$ for every $p,q\in\{0,\dots , n\}$, we say that the manifold $X$ has the {\bf $E_r$-Hodge Symmetry} property.

\end{Def}

 \vspace{2ex}

 We shall now see that the $E_r$-Hodge Decomposition property implies the $E_r$-Hodge Symmetry property. This follows from the following characterisation of the former property.

 \begin{The}\label{The:E_r_Hodge-decomp_pureDR} Let $X$ be a compact complex manifold with $\mbox{dim}_\C X=n$. Fix an arbitrary $r\in\N^\star$. Then, the following two conditions are equivalent:

\begin{enumerate}
	\item[(1)] $X$ has the {\bf $E_r$-Hodge Decomposition} property;
\item[(2)] the Fr\"olicher spectral sequence of $X$ {\bf degenerates at $E_r$} (we will denote this by $E_r(X)=E_\infty(X)$) and the de Rham cohomology of $X$ is {\bf pure}.

\end{enumerate}

 \end{The}

 \noindent {\it Proof.} $(1)\implies(2)$\, We have already noticed that the $E_r$-Hodge Decomposition property implies $E_r(X)=E_\infty(X)$ and that the image of each $E_r^{p,\,q}(X)$ in $H^{p+q}_{dR}(X,\,\C)$ under the map induced by the identity is $H^{p,\,q}_{dR}(X)$. We get $(2)$.

 \vspace{1ex}
 $(2)\implies(1)$\, Since the de Rham cohomology of $X$ is supposed pure, we know from Proposition \ref{Prop:purity-filtration} that $E_\infty^{p,\,q}(X)\simeq H^{p,\,q}_{dR}(X)$ (isomorphism induced by the identity) for all bidegrees $(p,\,q)$. On the other hand, $E_\infty^{p,\,q}(X) = E_r^{p,\,q}(X)$ for all bidegrees $(p,\,q)$ since we are assuming that $E_r(X)=E_\infty(X)$. Combined with the de Rham purity assumption, these facts imply that $X$ has the $E_r$-Hodge Decomposition property. \hfill $\Box$

\begin{Def}\label{Def:E_r_Hodge-decomp_pureDR}
	A compact complex manifold $X$ that satisfies the equivalent conditions $(1)$ and $(2)$ of Theorem \ref{The:E_r_Hodge-decomp_pureDR} is said to be a {\bf page-$(r-1)$-$\partial\bar\partial$-manifold}.
\end{Def}

 \begin{Cor}\label{Cor:E_r_decomp-sym} Any page-$(r-1)$-$\partial\bar\partial$-manifold has the $E_r$-Hodge Symmetry property.

\end{Cor}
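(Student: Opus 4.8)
The plan is to deduce the $E_r$-Hodge Symmetry property from the characterisation of the $E_r$-Hodge Decomposition property just obtained in Theorem and Definition~\ref{The-Def:E_r_Hodge-decomp_pureDR}, namely from the combination of degeneration at $E_r$ and purity of the De Rham cohomology. The first step is to recall that, by that characterisation, a page-$(r-1)$-$\partial\bar\partial$-manifold $X$ satisfies $E_r(X) = E_\infty(X)$, and that, since the De Rham cohomology is pure, Proposition~\ref{Prop:purity-filtration} gives the canonical identifications $E_r^{p,\,q}(X) = E_\infty^{p,\,q}(X) \simeq H^{p,\,q}_{DR}(X)$ for every bidegree $(p,\,q)$, the isomorphism being induced by the identity. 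In particular, every class in $E_r^{p,\,q}(X)$ has a $d$-closed representative (condition (a) of Definition~\ref{Def:conj_isom_sym} is automatic), so the only thing left to check is that the map in (b) of that definition is well-defined and bijective.

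The second step is to transport the conjugation on cohomology through these identifications. Under $E_r^{p,\,q}(X) \simeq H^{p,\,q}_{DR}(X)$, a $d$-closed $(p,\,q)$-form $\alpha^{p,\,q}$ corresponds to its De Rham class $\{\alpha^{p,\,q}\}_{DR}$, and conjugation sends this to $\overline{\{\overline{\alpha^{p,\,q}}\}_{DR}} \in \overline{H^{q,\,p}_{DR}(X)}$, which is precisely the isomorphism~(\ref{Def:DR_Hodge-sym}) that, as already observed in the text, \emph{always} holds for the spaces $H^{\bullet,\,\bullet}_{DR}(X)$. So the composite
$$E_r^{p,\,q}(X) \xrightarrow{\ \simeq\ } H^{p,\,q}_{DR}(X) \xrightarrow{\ \simeq\ } \overline{H^{q,\,p}_{DR}(X)} \xrightarrow{\ \simeq\ } \overline{E_r^{q,\,p}(X)}$$
is an isomorphism; one checks directly that it equals the map $\{\alpha^{p,\,q}\}_{E_r} \mapsto \overline{\{\overline{\alpha^{p,\,q}}\}_{E_r}}$, since the identification $E_r^{p,\,q} \simeq H^{p,\,q}_{DR}$ is the one induced by the identity and hence commutes with taking $d$-closed representatives. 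Well-definedness is then inherited from the well-definedness of~(\ref{Def:DR_Hodge-sym}) together with the fact that two $d$-closed $(p,\,q)$-forms representing the same $E_r$-class are automatically De Rham-cohomologous (being $E_r$-cohomologous they differ by an $E_r$-exact form, which is $d$-exact here because the $E_r$-Hodge Decomposition forces all the relevant identifications).

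The only mild subtlety — and the step I would be most careful about — is making sure the chain of identifications is genuinely the one appearing in Definition~\ref{Def:conj_isom_sym}(b), i.e.\ that passing to $d$-closed representatives is compatible with conjugation and with the canonical maps $E_r^{p,\,q}(X) \to H^{p+q}_{DR}(X,\,\C)$. This is where one uses that, under the page-$(r-1)$-$\partial\bar\partial$ hypothesis, the natural map $E_r^{p,\,q}(X) \to H^{p,\,q}_{DR}(X)$ is not merely surjective (as noted after Definition~\ref{Def:id_isom_decomp}) but bijective, so $d$-closed representatives exist and are unique up to $d$-exact $(p,\,q)$-forms; conjugating a $d$-closed $(p,\,q)$-form yields a $d$-closed $(q,\,p)$-form, and the assignment descends to cohomology on both sides. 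Once this bookkeeping is in place, bijectivity of the symmetry map is immediate from bijectivity of the three arrows in the display, completing the proof. \hfill $\Box$
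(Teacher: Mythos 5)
Your proposal is correct and follows essentially the same route as the paper's own proof: compose the identity-induced isomorphism $E_r^{p,\,q}(X)\simeq H^{p,\,q}_{DR}(X)$ (available under the page-$(r-1)$-$\partial\bar\partial$ hypothesis via Theorem and Definition \ref{The-Def:E_r_Hodge-decomp_pureDR} and Proposition \ref{Prop:purity-filtration}) with the always-valid conjugation isomorphism (\ref{Def:DR_Hodge-sym}) between $H^{p,\,q}_{DR}(X)$ and $\overline{H^{q,\,p}_{DR}(X)}$. The extra bookkeeping you carry out on well-definedness is a harmless elaboration of what the paper leaves implicit.
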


 \noindent {\it Proof.}  We have already noticed in (\ref{Def:DR_Hodge-sym}) that the conjugation (trivially) induces an isomorphism between any space $H_{dR}^{p,\,q}(X)$ and the conjugate of $H_{dR}^{q,\,p}(X)$. Meanwhile, we have seen that the page-$(r-1)$-$\partial\bar\partial$-assumption implies that the identity induces an isomorphism between any space $E_r^{p,\,q}(X)$ and $H_{dR}^{p,\,q}(X)$. Hence, the conjugation induces an isomorphism between any space $E_r^{p,\,q}(X)$ and the conjugate of $E_r^{q,\,p}(X)$.  \hfill $\Box$

\vspace{2ex}

Another obvious consequence of $(2)$ of Theorem \ref{The:E_r_Hodge-decomp_pureDR} and Definition \ref{Def:E_r_Hodge-decomp_pureDR} is that the page-$r$-$\partial\bar\partial$-property becomes weaker and weaker as $r$ increases.

\begin{Cor}\label{Cor:increasing-r_page-r-ddbar} Let $X$ be a compact complex manifold. Then, for every $r\in\N^\star$, the following implication holds:

\vspace{2ex}

\hspace{6ex} $X$ is a page-$r$-$\partial\bar\partial$-manifold $\implies$ $X$ is a page-$(r+1)$-$\partial\bar\partial$-manifold.

\end{Cor}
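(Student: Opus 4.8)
The plan is to reduce the statement to the characterisation of the page-$(r-1)$-$\partial\bar\partial$-property given in condition (ii) of Theorem and Definition~\ref{The-Def:E_r_Hodge-decomp_pureDR}: a compact complex manifold is page-$(r-1)$-$\partial\bar\partial$ if and only if its Fr\"olicher spectral sequence degenerates at $E_r$ and its De Rham cohomology is pure. So the assertion ``$X$ page-$r$-$\partial\bar\partial$ $\implies$ $X$ page-$(r+1)$-$\partial\bar\partial$'' becomes: if the FSS degenerates at $E_{r+1}$ and the De Rham cohomology is pure, then the FSS degenerates at $E_{r+2}$ and the De Rham cohomology is pure. The second half is trivial since the purity of the De Rham cohomology is a property of $X$ that does not refer to $r$ at all. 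Hence the entire content of the corollary is the implication $E_{r+1}(X)=E_\infty(X)\implies E_{r+2}(X)=E_\infty(X)$.

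This last implication is immediate from the general structure of a spectral sequence: once the differentials $d_{r+1}$ vanish so that $E_{r+1}^{p,\,q}(X)=E_{r+2}^{p,\,q}(X)$ for all $(p,\,q)$, and the sequence stabilises to $E_\infty$, all higher differentials $d_s$ with $s\geq r+1$ vanish as well, so $E_{r+1}(X)=E_{r+2}(X)=\dots=E_\infty(X)$. Concretely, $E_{r+1}(X)=E_\infty(X)$ means $d_s=0$ for all $s\geq r+1$; in particular $d_s=0$ for all $s\geq r+2$, which is exactly the statement $E_{r+2}(X)=E_\infty(X)$. One may also invoke the inclusions $\mathscr{C}_s^{p,\,q}(X)\subset\mathscr{C}_{s+1}^{p,\,q}(X)\subset\mathcal{Z}_{s+1}^{p,\,q}(X)\subset\mathcal{Z}_s^{p,\,q}(X)$ from part (iv) of Proposition~\ref{Prop:E_r-closed-exact_conditions}: degeneration at $E_{r+1}$ forces $\mathscr{C}_{r+1}^{p,\,q}(X)=\mathscr{C}_s^{p,\,q}(X)$ and $\mathcal{Z}_{r+1}^{p,\,q}(X)=\mathcal{Z}_s^{p,\,q}(X)$ for all $s\geq r+1$, hence $E_s^{p,\,q}(X)=E_{r+1}^{p,\,q}(X)$ for all such $s$, and in particular $E_{r+2}(X)=E_\infty(X)$.

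Thus the write-up is short: restate condition (ii) of Theorem and Definition~\ref{The-Def:E_r_Hodge-decomp_pureDR}, observe purity is unchanged, and note that degeneration at an earlier page formally implies degeneration at every later page. There is essentially no obstacle here --- the only thing to be mildly careful about is the indexing convention, namely that ``page-$r$-$\partial\bar\partial$'' corresponds to degeneration at $E_{r+1}$ (not $E_r$), so that the shift $r\rightsquigarrow r+1$ on the manifold side corresponds to $E_{r+1}\rightsquigarrow E_{r+2}$ on the spectral-sequence side. Once that bookkeeping is made explicit the corollary is immediate.

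\medskip

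\noindent{\it Proof.} By (ii) of Theorem and Definition~\ref{The-Def:E_r_Hodge-decomp_pureDR}, $X$ being a page-$r$-$\partial\bar\partial$-manifold means that the Fr\"olicher spectral sequence of $X$ degenerates at $E_{r+1}$ and that the De Rham cohomology of $X$ is pure. The purity of the De Rham cohomology is independent of any choice of page, so it suffices to observe that degeneration at $E_{r+1}$ implies degeneration at $E_{r+2}$. But degeneration at $E_{r+1}$ means that all the differentials $d_s$ with $s\geq r+1$ vanish, hence a fortiori all the differentials $d_s$ with $s\geq r+2$ vanish, which is precisely the statement that the Fr\"olicher spectral sequence degenerates at $E_{r+2}$. (Equivalently, by (iv) of Proposition~\ref{Prop:E_r-closed-exact_conditions}, the chains of inclusions $\mathscr{C}_s^{p,\,q}(X)\subset\mathscr{C}_{s+1}^{p,\,q}(X)$ and $\mathcal{Z}_{s+1}^{p,\,q}(X)\subset\mathcal{Z}_s^{p,\,q}(X)$ force $E_s^{p,\,q}(X)=E_{r+1}^{p,\,q}(X)$ for all $s\geq r+1$ and all $(p,\,q)$ once $E_{r+1}(X)=E_\infty(X)$.) Applying once more the characterisation (ii) of Theorem and Definition~\ref{The-Def:E_r_Hodge-decomp_pureDR}, we conclude that $X$ is a page-$(r+1)$-$\partial\bar\partial$-manifold. \hfill $\Box$
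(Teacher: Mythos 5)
Your proof is correct and is essentially the paper's own argument: the paper likewise invokes characterisation (ii) of Theorem and Definition \ref{The-Def:E_r_Hodge-decomp_pureDR}, notes that purity of the De Rham cohomology does not depend on the page, and observes that degeneration at an earlier page trivially implies degeneration at every later one. Your extra care with the indexing shift (page-$r$ corresponding to degeneration at $E_{r+1}$) is a harmless elaboration of the same reasoning.
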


Indeed, the purity of the de Rham cohomology is independent of $r$, while the property $E_r(X)=E_\infty(X)$ obviously implies $E_{r+1}(X)=E_\infty(X)$ for every $r\in\N$.

\subsection{Characterisation in terms of squares and zigzags}\label{subsection:squares-zigzags}

The goal of this section is to relate the page-$r$-$\del\delbar$-property to structural results about double complexes. This degree of generality has the advantage of emphasising which aspects of the theory are purely algebraic. Even if one is only interested in the complex $A_X:=(C^\infty_{p,\,q}(X),\del,\delbar)$ of $\C$-valued forms on a complex manifold $X$, in the more general setting one can consider certain finite-dimensional subcomplexes on an equal footing.

Specifically, by double complexes we mean bigraded vector spaces $A=\bigoplus_{p,q\in\Z} A^{p,q}$ with endomorphisms $\del_1,\del_2$ of bidegrees $(1,0)$, resp. $(0,1)$, satisfying $d^2=0$ for $d:=\del_1+\del_2$. We do not require $A$ to be finite-dimensional. We will always assume our double complexes to be bounded, i.e. $A^{p,q}=0$ for all but finitely many $(p,q)\in\Z^2$.

There are now two Fr\"olicher-style spectral sequences, starting from column, i.e. ($\partial_2$-), resp. row, i.e. ($\partial_1$-), cohomology and converging to the total (de Rham) cohomology of $(A,\,d)$. We denote them by

\[
_iE^{p,q}_r(A)\Longrightarrow (H_{dR}^{p+q}(A),F_i)\quad i=1,2.
\]

In the case $A=A_X$, the case $i=1$ is the Fr\"olicher spectral sequence and $i=2$ its conjugate.\\

The following is a minor extension to general double complexes of the definition (based on its second characterisation) of the page-$(r-1)$-$\del\delbar$-property of manifolds. The equivalence of the two conditions is seen just as before.

\begin{Def}
A double complex $A$ is said to satisfy the {\bf page-$(r-1)$-$\del_1\del_2$-property} if one (hence both) of the following equivalent conditions hold:
\begin{enumerate}
	\item[(1)] Both Fr\"olicher spectral sequences degenerate at page $r$ and the de Rham cohomology is pure.
	\item[(2)] For $i=1,2$, every $_iE_r^{p,q}(A)$-class contains a $d$-closed representative and the corresponding map \[
	\bigoplus_{p+q=k}{}_iE_r^{p,q}(A)\longrightarrow H_{dR}^k(A)
	\] induced by the identity is well-defined and bijective.
\end{enumerate}
\end{Def}

The following observation will motivate the subsequent considerations.

\begin{Obs}\label{direct sums and ddbar}

The Fr\"olicher spectral sequences, as well as $H_{dR}, H_{A}$ and $H_{BC}$, are compatible with direct sums. In particular, a sum $A=B\oplus C$ satisfies the page-$r$-$\del_1\del_2$-property if and only if $B$ and $C$ do.

\end{Obs}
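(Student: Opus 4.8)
The statement to prove is Observation~\ref{direct sums and ddbar}: the Frölicher spectral sequences, De Rham, Aeppli and Bott-Chern cohomologies are compatible with direct sums, and consequently the page-$r$-$\partial_1\partial_2$-property is inherited by and from direct summands. The plan is to reduce everything to the elementary fact that all the functors in sight are built out of kernels, images and quotients of the structure maps $\partial_1,\partial_2$, and that each of these operations commutes with finite (indeed arbitrary) direct sums of double complexes.

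First I would observe that if $A=B\oplus C$ as double complexes, then in each bidegree $A^{p,q}=B^{p,q}\oplus C^{p,q}$ and the differentials split as $\partial_i^A=\partial_i^B\oplus\partial_i^C$. Hence for any of the relevant (pseudo-)differential operators $D$ (namely $\partial_1$, $\partial_2$, $d=\partial_1+\partial_2$, or $\partial_1\partial_2$) one has $\ker D^A=\ker D^B\oplus\ker D^C$ and $\operatorname{Im} D^A=\operatorname{Im} D^B\oplus\operatorname{Im} D^C$, and since quotients commute with direct sums, every cohomology defined as such a subquotient --- $H_{DR}$, $H_{BC}$, $H_A$, and the row/column cohomologies ${}_iE_1$ --- satisfies $H^{\bullet}(A)=H^{\bullet}(B)\oplus H^{\bullet}(C)$ compatibly with the canonical maps between them. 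For the higher pages ${}_iE_r$ I would then induct on $r$: the page ${}_iE_{r+1}$ is the cohomology of $({}_iE_r,d_r)$, the differential $d_r$ on a direct sum is the direct sum of the differentials (this follows from the description in Proposition~\ref{Prop:E_r-closed-exact_conditions}, where $d_r$ is computed from the tower of equations, solvable componentwise in $B$ and $C$ separately), so taking cohomology again preserves the direct sum decomposition. Equivalently one may just invoke that the Frölicher spectral sequence is a functor from double complexes to spectral sequences that preserves direct sums, which is classical.

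With this in hand, the second assertion is immediate. The Frölicher spectral sequence of $A=B\oplus C$ degenerates at page $r$ (i.e. ${}_iE_r^{p,q}(A)={}_iE_{r+1}^{p,q}(A)$ for all $p,q$) if and only if the same holds for both $B$ and $C$, because a direct sum of linear maps is an isomorphism exactly when each summand map is. Purity of the De Rham cohomology of $A$ --- the condition $H^k_{DR}(A)=\bigoplus_{p+q=k}H^{p,q}_{DR}(A)$ where $H^{p,q}_{DR}$ is the image of $d$-closed pure-type elements --- likewise holds for $A$ iff it holds for $B$ and $C$, since under the decomposition $H^k_{DR}(A)=H^k_{DR}(B)\oplus H^k_{DR}(C)$ the pure-type subspace $H^{p,q}_{DR}(A)$ decomposes as $H^{p,q}_{DR}(B)\oplus H^{p,q}_{DR}(C)$, and a sum of subspaces is direct and spanning iff it is so in each summand. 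Combining the two gives that $A$ has the page-$(r-1)$-$\partial_1\partial_2$-property iff $B$ and $C$ do.

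I do not expect a serious obstacle here; the content is purely formal. The one point requiring a word of care is that the \emph{higher} differentials $d_r$ and the higher pages are not given by a single fixed subquotient of $A$ but are constructed iteratively, so one must make explicit that the iteration proceeds summand-by-summand --- this is exactly where the componentwise solvability of the tower of equations in Proposition~\ref{Prop:E_r-closed-exact_conditions} is used, and it is the only step that is not a one-line verification. An alternative, equally acceptable route is to cite the functoriality of the Frölicher spectral sequence under direct sums as a known fact and then only spell out the (trivial) direct-sum compatibility of $H_{DR}$, $H_{BC}$ and $H_A$, which suffices for everything stated.
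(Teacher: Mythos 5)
Your proposal is correct, and in fact the paper offers no proof of this Observation at all -- it is stated as an evident fact motivating the zigzag decomposition that follows. The formal argument you give (all the cohomologies are subquotients built from kernels, images and sums of the split differentials, plus induction on $r$ for the higher pages using the functoriality of the spectral sequence under the inclusion/projection maps of the direct sum) is exactly the standard reasoning the authors implicitly rely on, and your treatment of the two halves of the page-$r$-$\del_1\del_2$-property (degeneration and purity) componentwise is sound.
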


Recall that a (nonzero) double complex $A$ is called {\bf indecomposable} if there exists no nontrivial decomposition $A=B\oplus C$ into subcomplexes $B,C$.

\begin{The}([21, 38])\label{The:Decomposition} For every bounded double complex over a field $K$, there exists an {\bf isomorphism} 

\[
A\cong \bigoplus_{C} C^{\oplus \mult_{C}(A)},
\] where $C$ runs over a set of representatives for the isomorphism classes of bounded {\bf indecomposable} double complexes and $\mult_{C}(A)$ are (not necessarily finite) cardinal numbers uniquely determined by $A$.

Moreover, each bounded {\bf indecomposable} double complex is finite dimensional and isomorphic to a complex of one of the following types:

\begin{enumerate}

\item[(1)] {\bf square}: a double complex generated by a single pure-$(p,q)$-type element $a$ in a given bidegree with no further relations (i.e. $\del_1a\neq 0\neq \del_2a, \del_1\del_2a\neq 0$):

\[
\begin{tikzcd}
\langle\del_2 a\rangle\ar[r]&\langle\del_2\del_1  a\rangle\\
\langle a\rangle\ar[u]\ar[r]&\langle\del_1 a\rangle\ar[u].
\end{tikzcd}
\]
\item[(2)] {\bf even-length zigzag of type $1$ and length $2l$}. This is a complex generated by elements $a_1,...a_l$ and their differentials such that $\del_2 a_1=0$ and $\del_1 a_1=-\del_2 a_2$, $\del_1 a_2=-\del_2 a_3$, ..., $\del_1 a_{l-1}=-\del_2 {a_l}$ and no further relations (i.e. $\del_1 a_{l}\neq 0$ for all $i=1,...,l$). Drawing only nonzero arrows, it has the shape:

\[
\begin{tikzcd}
& \langle a_1\rangle \arrow{r} & \langle \del_1 a_1 \rangle \\
& & \langle a_2 \rangle \arrow{u} \arrow{r} & \arrow[draw=none]{rd}{\cdots} \\
& & & & { } \\
& & & & \langle a_l \rangle \arrow{u} \arrow{r} & \langle \del_1 a_l \rangle.
\end{tikzcd}
\]
Here, as in all the following examples, the length of a zigzag is the number of its vertices.

\item[(3)] {\bf even-length zigzag of type $2$ and length $2l$}. This is a complex generated by elements $a_1,...,a_l$ and their differentials, such that $\del_1a_1=-\del_2 a_2$, $\del_1 a_2=-\del_2 a_3$, ..., $\del_1a_{l-1}=-\del_2a_l$, $\del_1 a_l=0$ and no further relations. It is of the shape:

\[
\begin{tikzcd}
&\langle \del_2 a_1\rangle\\
& \langle a_1\rangle \arrow{r}\arrow{u} & \langle \del_1 a_1 \rangle \\
& & \langle a_2 \rangle \arrow{u} \arrow{r} & \arrow[draw=none]{rd}{\cdots} \\
& & & & { } \\
& & & & \langle a_l \rangle. \arrow{u}
\end{tikzcd}
\]

\item[(4)] {\bf odd-length zigzag of type $M$ and length $2l+1$}. This is a complex generated by elements $a_1,...,a_{l+1}$ with the only relations $\del_1 a_i=-\del_2a_{i+1}$, $\del_2 a_1= 0$ and $\del_1 a_{l+1}=0$. It has the shape:

\[
\begin{tikzcd}
& \langle a_1\rangle \arrow{r} & \langle \del_1 a_1 \rangle \\
& & \langle a_2 \rangle \arrow{u} \arrow{r} & \arrow[draw=none]{rd}{\cdots} \\
& & & & { } \\
& & & & \langle a_{l+1} \rangle \arrow{u}.&
\end{tikzcd}
\]

The special case where $l=0$ is also called a {\bf dot}.

\item[(5)] {\bf odd-length zigzag of type $L$ and length $2l+1$ ($l>0$)}. This is a complex generated by elements $a_1,...,a_l$ with the only relations  $\del_1 a_i=-\del_2 a_{i+1}$ (i.e. $\del_2 a_i\neq 0\neq\del_1 a_i$ for all $i$). It has the shape:

\[
\begin{tikzcd}
& \langle \del_2 a_1\rangle &&&&\\
& \langle a_1\rangle \arrow{r}\arrow{u} & \langle \del_1 a_1 \rangle\arrow[draw=none]{rd}{\cdots}& \\
%  & & \langle a_2 \rangle \arrow{u} \arrow{r} & \arrow[draw=none]{rd}{\cdots} \\
& & &{} & \langle \del_2 a_{l}\rangle \\
& & & & \langle a_l \rangle \arrow{u} \arrow{r} & \langle \del_1 a_l \rangle.
\end{tikzcd}
\] 

\end{enumerate}

\end{The}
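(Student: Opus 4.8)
The plan is to prove the structure theorem (Theorem~\ref{The:Decomposition}) in two stages: first the uniqueness of the decomposition once it exists, then the classification of indecomposables, and finally the existence of the decomposition itself via a Krull--Schmidt-type argument.

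For the \emph{existence of a decomposition into indecomposables}, I would first observe that a bounded double complex over a field can be viewed as a module over a suitable (non-commutative) ring, or more elementarily, argue directly: since $A$ is bounded, it is in particular finite-dimensional in each bidegree, so any descending chain of direct summands stabilises. Thus one can peel off indecomposable summands one at a time until nothing is left. (If one does not assume finite-dimensionality in each bidegree, one needs a transfinite/Zorn argument, which is why the multiplicities are stated as cardinal numbers.) For \emph{uniqueness} of the multiplicities, the key is a Krull--Schmidt principle: the endomorphism ring of each indecomposable double complex should be shown to be local (every endomorphism is either an isomorphism or nilpotent), and then the standard exchange-lemma argument gives uniqueness of $\mult_C(A)$. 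Alternatively, and more in the spirit of spectral sequences, one can pin down each $\mult_C(A)$ by a formula in terms of dimensions of the $E_r$-pages and Bott--Chern/Aeppli/De Rham cohomologies, using Observation~\ref{direct sums and ddbar} (these invariants are additive under $\oplus$) together with the explicit computation of these invariants on each indecomposable $C$ from the classification list; since the invariants of the list members are ``linearly independent'' in an appropriate sense, the multiplicities are forced.

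For the \emph{classification of indecomposables}, the strategy is to analyse an indecomposable bounded double complex $A$ via its two differentials. First I would reduce modulo $\del_2$-cohomology-type considerations: pick a nonzero element $a$ of minimal total degree (or in a corner of the support), and follow the ``zigzag'' it generates under alternately applying $\del_1$ and $\del_2^{-1}$ (in the sense of solving $\del_2 x = \del_1(\text{previous})$). The point is that the subcomplex generated by such a chain, together with a carefully chosen complement, splits off as one of the listed pieces; indecomposability then forces $A$ to equal that piece. Concretely, one does linear algebra one anti-diagonal at a time: choosing bases adapted to $\ker\del_1,\ker\del_2,\operatorname{Im}\del_1,\operatorname{Im}\del_2$ and their intersections, one shows the complex decomposes into the five standard shapes (squares, the two kinds of even zigzags, the two kinds of odd zigzags including dots). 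This is essentially a normal-form theorem for pairs of anticommuting square-zero maps.

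The main obstacle I expect is the bookkeeping in the classification step: making the inductive ``peeling off a zigzag'' argument precise requires care to ensure that at each stage one can choose a genuine \emph{direct summand} (not merely a subcomplex), which amounts to showing certain short exact sequences of double complexes split. This splitting is where finite-dimensionality (per bidegree) and working over a field are used crucially. The uniqueness part, by contrast, is comparatively routine once one commits either to the Krull--Schmidt/local-endomorphism-ring route or to the additive-invariants route. Since the statement attributes the result to [Ste18, KQ19], I would in practice simply cite those sources for the full argument and only sketch the above; the one point genuinely worth spelling out for the reader is that the invariants $E_r$, $H_{DR}$, $H_{BC}$, $H_A$ are additive (Observation~\ref{direct sums and ddbar}) and take prescribed values on the indecomposables, since that is what is used repeatedly in the subsequent sections.
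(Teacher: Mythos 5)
The paper offers no proof of this statement: it is quoted verbatim from [Ste18] (and [KQ19]) and used as a black box, so there is nothing internal to compare your argument against. Judged on its own merits, your outline is broadly consistent with how the result is actually proved in the cited sources: classify indecomposables by a normal-form/linear-algebra analysis of two anticommuting square-zero differentials (in [Ste18] one first splits off squares, reducing to the case $\del_1\del_2=0$, and then classifies the remaining pieces as zigzags), and obtain uniqueness of the multiplicities either from an Azumaya/Krull--Schmidt argument (the endomorphism rings of squares and zigzags are local) or from additive cohomological invariants that separate the indecomposables.

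One genuine error to flag: you assert that ``since $A$ is bounded, it is in particular finite-dimensional in each bidegree.'' With the paper's definition, \emph{bounded} means only that $A^{p,q}\neq 0$ for finitely many $(p,q)$; the individual spaces $A^{p,q}$ may be infinite-dimensional, and indeed the principal example $A_X=(C^\infty_{p,q}(X),\del,\delbar)$ has infinite-dimensional components. So the ``peel off summands until nothing is left'' argument in the finite-dimensional setting does not cover the case the paper actually needs; the transfinite/Zorn argument you relegate to a parenthesis is not optional but essential, and the uniqueness step must likewise be run in the infinite setting (Azumaya's theorem rather than classical Krull--Schmidt). This is precisely why the multiplicities are stated as cardinal numbers. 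With that correction, your sketch is a fair summary of the cited proofs, and your closing observation --- that what the present paper really uses is the additivity of $E_r$, $H_{DR}$, $H_{BC}$, $H_A$ under direct sums together with their explicit values on each indecomposable --- is accurate.
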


\vspace{2ex}

Illustrating this result, we point out the following

\begin{Ex} Any bounded complex of $K$-vector spaces $(V^\Cdot,\,\delta)$ is a direct sum of complexes of the following two forms:

\[
...\longrightarrow 0\longrightarrow K^{\oplus n}\longrightarrow 0 \longrightarrow ...\quad\text{ and }\quad ...\longrightarrow 0\longrightarrow K^{\oplus m}\overset{\cong}{\longrightarrow}K^{\oplus m}\longrightarrow 0\longrightarrow ...
\]

This can easily be seen directly by picking succesive complements of $\im\delta\subseteq \ker\delta\subseteq V^k$ in every degree.

A special case of Theorem \ref{The:Decomposition} is obtained when one considers $(V^\Cdot,\,\delta)$ as a double complex concentrated on a single  horizontal line, i.e. setting $V^{p,0}:=V^p$ and $V^{p,q}:=0$ for $q\neq 0$, $\del_1:=\delta$ and $\del_2:=0$. 

\end{Ex}

As an application of Theorem \ref{The:Decomposition}, we get the following result expressed in the language of this work.

\begin{The}\label{The: Characterisation zigzags} Let $A$ be a bounded double complex over a field $K$. The following statements are equivalent.\begin{enumerate} \item[(1)]
		$A$ satisfies the {\bf page-$r$-$\del_1\del_2$-property}.

\item[(2)] There exists an {\bf isomorphism} between $A$ and a {\bf direct sum} of {\bf squares}, {\bf even-length zigzags} of length $\leq 2r$ and {\bf odd-length zigzags} of length one (i.e. dots).

\end{enumerate}

\end{The}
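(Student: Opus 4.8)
The plan is to reduce the equivalence to a case check over the list of bounded indecomposable double complexes furnished by Theorem~\ref{The:Decomposition}, using the additivity recorded in Observation~\ref{direct sums and ddbar}. Concretely, write $A\cong\bigoplus_C C^{\oplus\mult_C(A)}$ with $C$ ranging over representatives of the isomorphism classes of bounded indecomposables (squares, the four zigzag families, dots). Since the two Fr\"olicher spectral sequences and $H_{DR}$ commute with direct sums, $A$ satisfies the page-$r$-$\partial_1\partial_2$-property --- that is, both spectral sequences degenerate at page $r+1$ and $H_{DR}(A)$ is pure --- if and only if each indecomposable $C$ occurring with $\mult_C(A)\neq 0$ does. (The only subtlety is that this additivity must survive the possibly infinite direct sums of Theorem~\ref{The:Decomposition}, which it does, cohomology and spectral sequences commuting with arbitrary coproducts.) Thus the theorem reduces to the claim that, among indecomposables, precisely the squares, the dots, and the even-length zigzags of length $\leq 2r$ have the page-$r$-$\partial_1\partial_2$-property.

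I would then run through the families in increasing order of difficulty. For a \emph{square}, every row and every column restricts to a two-term complex with invertible differential, so row, column and total cohomology all vanish; both spectral sequences are trivial already at page~$1$ and $H_{DR}=0$ is vacuously pure, so a square has the property for every $r\geq 0$. A \emph{dot} is one-dimensional in row, column and De Rham cohomology, concentrated in one bidegree $(p,q)$, all its differentials vanish, and its unique De Rham class is of pure type, so it too has the property for every $r$. For an \emph{odd-length zigzag of length $2l+1$ with $l\geq 1$} --- the case that forces the purity hypothesis into play --- I would spell out the total complex: generators $a_1,\dots,a_{l+1}$ lying in a single total degree $k$, the $l$ elements $\partial_1 a_i=-\partial_2 a_{i+1}$ in degree $k+1$, and nothing in degree $k-1$. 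A direct computation gives $H^k_{DR}\cong K$, spanned by the class of $a_1+\cdots+a_{l+1}$, and $H^m_{DR}=0$ for $m\neq k$; since there is nothing in total degree $k-1$, this class has $a_1+\cdots+a_{l+1}$ as its only representative, which is not of pure type. Hence no $H^{i,k-i}_{DR}$ contains it, purity fails, and no odd zigzag of length $\geq 3$ ever has the property.

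Finally the \emph{even-length zigzags of length $2l$}, which come in types $1$ and $2$ interchanged by transposition; it suffices to treat type~$1$, the statement for type~$2$ then following by swapping the roles of the two spectral sequences. Here the total complex is acyclic --- the total differential is an isomorphism in the only two nonzero total degrees it involves --- so $H_{DR}=0$ is again vacuously pure and all that matters is the degeneration page. For type~$1$ every row is an exact two-term complex, so ${}_2E_1=0$ and the second spectral sequence degenerates at page~$1$. The first page ${}_1E_1$ of the other is two-dimensional, supported in the bidegree of $a_1$ and in that of $\partial_1 a_l$, which differ by $(l,\,-l+1)$; for bidegree reasons the differentials $d_1,\dots,d_{l-1}$ vanish on ${}_1E_1$, so ${}_1E_l={}_1E_1$ is still two-dimensional, and then $d_l$ is forced to be nonzero --- otherwise ${}_1E_\infty$ would be two-dimensional, contradicting $H_{DR}=0$ via Theorem~\ref{The:standard_E_infty-filtration} --- whence ${}_1E_{l+1}=0$. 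So such a zigzag degenerates exactly at page $l+1$, and therefore has the page-$r$-$\partial_1\partial_2$-property precisely when $l+1\leq r+1$, i.e. when its length $2l$ is $\leq 2r$. Combining this with the three earlier cases and the reduction of the first paragraph establishes both implications of the theorem.

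The main obstacle I expect is combinatorial rather than conceptual: in the even-zigzag case one must argue carefully that $d_1,\dots,d_{l-1}$ all vanish for degree reasons and that $d_l$ is then forced to be nonzero, and in the odd-zigzag case one must make the ``rigidity'' point watertight, namely that the absence of anything in total degree $k-1$ genuinely pins down the single non-pure representative of the surviving De Rham class. A secondary sanity check is the index bookkeeping linking ``page-$r$-$\partial_1\partial_2$'' to ``degenerates at page $r+1$'', which is exactly what makes the permitted length come out as $\leq 2r$ rather than $\leq 2r-2$.
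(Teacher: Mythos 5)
Your overall strategy is exactly the one the paper intends: the paper's proof of this theorem is a one-line reference to [Ste18], preceded by the remark that one should ``work out which indecomposable complexes satisfy the page-$r$-$\partial_1\partial_2$-property'' and combine this with Observation \ref{direct sums and ddbar}; you carry out that exercise. Your treatment of squares, dots, and even-length zigzags (including the bookkeeping that length $2l$ forces degeneration exactly at page $l+1$, hence the bound $2l\leq 2r$) is correct.

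There is, however, a genuine gap in the odd-length case: you only analyse zigzags of type $M$. Your description --- $l+1$ generators $a_1,\dots,a_{l+1}$ in total degree $k$, $l$ elements $\partial_1 a_i=-\partial_2 a_{i+1}$ in degree $k+1$, nothing in degree $k-1$ --- is the type $M$ shape, and your ``rigidity'' argument (the surviving class has a unique, non-pure representative, so fullness fails) is specific to it. A type $L$ zigzag of length $2l+1$ ($l\geq 1$) has $l$ generators in degree $k$ and $l+1$ elements in degree $k+1$; there $H^k_{DR}=0$, $H^{k+1}_{DR}\cong K$, both spectral sequences already degenerate at page $1$, and the surviving De Rham class \emph{does} admit pure-type representatives --- namely $\partial_2 a_1$ and (up to sign) $\partial_1 a_l$, which lie in different bidegrees but represent the same class since $[\partial_2 a_1]=-[\partial_1 a_1]=\dots$. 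So purity fails for the dual reason: the subspaces $H^{p,q}_{DR}$ are not in direct sum, not because they fail to fill out $H^{k+1}_{DR}$. Your argument as written would let type $L$ zigzags slip through as satisfying the page-$r$-property for every $r$, which would break the implication $(1)\Rightarrow(2)$. The fix is routine --- add the directness computation above --- but it is a separate argument, not a reflection or dualisation of the type $M$ one, and it is precisely the point the paper isolates in the proof of Proposition \ref{Lem: pure<=>full} (type $L$ is responsible for failure of pureness, type $M$ of length $>1$ for failure of fullness).
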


\noindent {\it Proof.} This is a special case of [38, Thm C], which states in particular that the Fr\"olicher spectral sequences degenerate at page $r$ if and only if all even length zigzags of length $\geq 2r$ have multiplicity zero and that $H_{dR}^k(A)$ is pure of weight $k$ if and only if all odd-length zigzags of length $\geq 3$ have multiplicity zero.

For the reader's convenience and because we will need this type of reasoning again in the next section, we recall the proof in our case. By Observation \ref{direct sums and ddbar} and Theorem \ref{The:Decomposition}, it suffices to show that an {\it indecomposable} double complex satisfies the page-$r$-$\del_1\del_2$-property if and only if it is one of those listed under $(2)$ in the statement. We now run through the list given in Theorem \ref{The:Decomposition}.

\vspace{1ex}

$(1)$: For a square, both row and column cohomologies are zero. Therefore, the spectral sequences trivially degenerate and the total cohomology vanishes. In particular, the page-$r$-$\del_1\del_2$-property is trivially satisfied.

\vspace{1ex}

$(2)$ \& $(3)$: An even-length zigzag $Z$ of type $1$ and length $2l$ has vanishing row cohomology. Therefore, all terms in the second spectral sequence are identically zero and this spectral sequence degenerates for trivial reasons. On the other hand, the column cohomology is $2$-dimensional, with the two classes at the endpoints generating a one-dimensional space each. Since the total cohomology has to vanish (it is the limit of both spectral sequences), there must be a non-trivial differential at some page of $_1E(Z)$ and for space reasons it can only be at page $l$. Hence, $Z$ has the page-$r$-$\del_1\del_2$-property if and only if $l\leq r$. The case of an even length zigzag $Z$ of type $2$ is analogous, reversing the roles of row and column cohomology.

\vspace{1ex}

$(4)$ \& $(5)$: For an odd-length zigzag $Z$, both row and column cohomologies are one-dimensional, so there is no space for non-trivial differentials and both spectral sequences degenerate at the first page. Moreover, the de Rham cohomology is one-dimensional. If $Z$ is of type $M$, $H_{dR}(Z)$ is identified with the one-dimensional space generated by $a:=\sum_{i=1}^{l+1}a_i$, which is of pure-type only if $l=0$, namely if $Z$ is a dot. If $Z$ is of type $L$, it is generated by $[\del_2a_1]_{dR}=[-\del_1 a_1]_{dR}$, where the representatives live in different bidegrees. Therefore, $H_{dR}(Z)$ is not pure, so $Z$ does not have the page-$r$-$\del_1\del_2$-property. \hfill $\Box$

\begin{Rem} This theorem also gives a quick alternative proof to Prop. \ref{Prop:dd-bar_page_0_equiv} (equivalence of page-$0$-$\del_1\del_2$ with the usual $\del_1\del_2$-property). 

\end{Rem}

\noindent {\it Proof.} Indeed, the page-$0$-$\del_1\del_2$-property means that there is a decomposition of $A$ into squares and dots. Obviously, both satisfy the usual $\del_1\del_2$-property. Conversely, in any zigzag of length $\geq 2$ there is a closed element (`form') of pure type, which is $\del_1$- or $\del_2$-exact, but no non-zero element in a zigzag is $\del_1\del_2$-exact. Hence, if $A$ satisfies the usual $\del_1\del_2$-property, in any decomposition of $A$ into elementary complexes only squares and length-one zigzags can occur.  \hfill $\Box$

\begin{Def}\label{Def: E_r-iso}

A map $A\longrightarrow B$ of double complexes is an {\bf $E_r$-isomorphism} if $_iE_r(f)$ is an isomorphism for $i\in\{1,2\}$.

\end{Def}

One writes $A\simeq_r B$ if there exist such an $E_r$-isomorphism. The usefulness of this notion stems from the following statements.

\begin{Lem}\label{Lem: E_r isos in cohomology}([38, Prop. 12]) If $H$ is a linear functor from the category of double complexes to the category of vector spaces which maps squares and even-length zigzags of length $<2r$ to $0$, then $H(f)$ is an isomorphism for any $E_r$-isomorphism $f$.

\end{Lem}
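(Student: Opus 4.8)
The plan is to derive this from the structure theory of bounded double complexes (Theorem \ref{The:Decomposition}) together with the additivity of $H$. The first step is to record how the two Fr\"olicher spectral sequences see the indecomposable pieces. A square has $_iE_s=0$ for both $i\in\{1,2\}$ and all $s\geq1$. An even-length zigzag of length $2l$ (of either type) contributes, on each page $_iE_s$ with $1\leq s\leq l$, a two-dimensional space sitting in two bidegrees that differ by $(l,1-l)$, on which the differential $d_l$ is an isomorphism; consequently $_iE_s=0$ for $s>l$. An odd-length zigzag (in particular a dot) contributes a one-dimensional space on every page $_iE_s$, which survives to $_iE_\infty$ and hence to the total cohomology. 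The conclusion I want to extract is: squares and even-length zigzags of length $<2r$ have vanishing $_iE_r$, while every other indecomposable summand has nonzero $_iE_r$.

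Next I would carry out an additivity reduction. Write $A\cong A_0\oplus A_1$ and $B\cong B_0\oplus B_1$, where $A_0,B_0$ collect the squares and the even-length zigzags of length $\leq 2r$, and $A_1,B_1$ collect the odd-length zigzags and the even-length zigzags of length $>2r$. By the hypothesis on $H$ we have $H(A_0)=H(B_0)=0$, so the inclusions and projections identify $H(A)\cong H(A_1)$ and $H(B)\cong H(B_1)$. Expanding $f$ as a block matrix with respect to these splittings and applying the additive functor $H$, every block factoring through $A_0$ or $B_0$ is annihilated, and a routine manipulation in the additive category shows that $H(f)$ is an isomorphism if and only if $H(g)$ is, where $g:=\mathrm{pr}_{B_1}\circ f\circ\iota_{A_1}\colon A_1\to B_1$. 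Since $_iE_r$ is itself additive and vanishes on all summands of $A_0,B_0$ \emph{except} the length-$2r$ zigzags, keeping track of the latter separately along the splitting (two-out-of-three for the associated exact sequences) shows that $g$ is again an $E_r$-isomorphism between the ``essential'' complexes $A_1$ and $B_1$.

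It then remains to prove that an $E_r$-isomorphism $g\colon A_1\to B_1$ between sums of odd-length zigzags and even-length zigzags of length $>2r$ induces an isomorphism on $H$. Here I would first note that an $E_r$-isomorphism is automatically an $E_s$-isomorphism for every $s\geq r$ (pass to iterated $d_s$-cohomology, using the functoriality of the spectral sequences) and hence an isomorphism on total cohomology. Reading off, in each bidegree, the multiplicities of the indecomposable summands from the dimensions and the ranks of the differentials $d_s$ on the pages $_iE_s$, $s\geq r$, one forces $A_1$ and $B_1$ to have equal multiplicities of every indecomposable type. A Nakayama-style lifting argument along the page filtration then upgrades the page isomorphisms to a genuine block decomposition of $g$ whose diagonal part is an isomorphism of double complexes and whose remaining part is built out of squares and even zigzags of length $\leq 2r$; since $H$ kills the latter, additivity yields that $H(g)$, and therefore $H(f)$, is an isomorphism.

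The step I expect to be the main obstacle is this last one: lifting the isomorphism of $E_r$-pages to an actual matching of the indecomposable summands of $A_1$ and $B_1$ realised by $g$, as opposed to merely counting them; this is precisely the content of [Ste18, Prop. 12] and rests on the uniqueness part of the decomposition theorem and on an analysis of morphisms between the string-type indecomposables. An alternative I would consider, possibly cleaner, is to package the whole argument through the mapping cone: show that $f$ is an $E_r$-isomorphism if and only if its (bounded) cone decomposes into squares and even-length zigzags of length $\leq 2r$, on which $H$ vanishes, and then conclude from the long exact sequence in $H$; the technical cost of that route is setting up the cone correctly as a bounded double complex. \hfill $\Box$
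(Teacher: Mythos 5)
The paper gives no argument for this statement—it is imported wholesale from [Ste18, Prop.~12]—so there is no internal proof to measure yours against; judged on its own terms, your proposal sets the stage correctly but stops exactly where the content of the lemma begins. The reduction via Theorem \ref{The:Decomposition} and additivity to a map $g\colon A_1\to B_1$ between sums of odd-length zigzags and even-length zigzags of length $>2r$ is sound (your passage through $E_{r+1}$ to discard the length-$2r$ zigzags is the right device, modulo the small slip that an even-length zigzag is seen by only \emph{one} of the two spectral sequences, not both; in fact all of $d_1,\dots,d_r$ vanish on $A_1$ and $B_1$, so $g$ is even an $E_1$-isomorphism). But your route for the last step does not close. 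First, the multiplicities $\mult_Z(A_1)$ are \emph{not} recoverable from the dimensions of the pages $_iE_s^{p,q}$ and the ranks of the differentials alone: a length-five type-$M$ zigzag plus a dot at its middle vertex has exactly the same page data, in both spectral sequences and in every bidegree, as the sum of two length-three type-$M$ zigzags overlapping at that vertex, yet the multiplicities differ. Equality of multiplicities requires Lemma \ref{Lem: E_1 isos and zigzags}, which already presupposes the \emph{existence} of the morphism. Second, and more seriously, even granting $A_1\cong B_1$ abstractly, this says nothing about the particular map $g$ and the arbitrary linear functor $H$: what is needed is a pair of decompositions with respect to which $g$ is block-triangular with an isomorphism of double complexes on the diagonal and off-diagonal components factoring through summands killed by $H$. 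That is the ``Nakayama-style lifting'' you defer, and, as you yourself concede, it is precisely the assertion being proved; without it the argument is circular.

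Your fallback via the mapping cone fails for a different reason: $H$ is only assumed linear (additive), so it carries no long exact sequence of a cone—nothing in the hypotheses makes $H$ cohomological, and the lemma is meant to apply to functors (such as $E_{r,BC}^{p,q}$ or $H_A^{p,q}$) whose exactness properties are exactly what one is trying to avoid discussing. The cone criterion for $E_r$-isomorphisms is true and useful, but the conclusion must still be extracted through additivity and an explicit splitting of $f$, i.e.\ through the same missing decomposition argument. To complete the proof one genuinely needs the morphism-level structure theory of [Ste18]: a Krull--Schmidt/Gaussian-elimination analysis of maps between the string-type indecomposables showing that every component of $g$ that is nonzero on some page can be split off as an isomorphism of summands, leaving a remainder supported on squares and even-length zigzags of length $\leq 2r$.
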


\begin{Lem}\label{Lem: E_1 isos and zigzags}([38, Prop. 11]) For two double complexes $A,B$ one has $A\simeq_1 B$ if and only if `the same zigzags occur in $A$ and $B$', i.e. $\mult_Z(A)=\mult_Z(B)$ for all zigzags $Z$.

\end{Lem}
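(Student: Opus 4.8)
The plan is to deduce both directions from the structural decomposition of Theorem~\ref{The:Decomposition}, exploiting throughout that a \emph{square} has vanishing row- and column-cohomology (in fact a square is $d$-contractible), so that adjoining or deleting square summands never affects the $E_1$-type; in particular, since squares are not zigzags, the square parts of $A$ and $B$ play no role in the statement.

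For the implication ``$\Leftarrow$'', suppose $\mult_Z(A)=\mult_Z(B)$ for all zigzags $Z$. By Theorem~\ref{The:Decomposition} one may write $A\cong S_A\oplus N$ and $B\cong S_B\oplus N$, with $S_A,S_B$ direct sums of squares and $N:=\bigoplus_Z Z^{\oplus\mult_Z(A)}$ the common zigzag part. The composite of double complex morphisms $A\cong S_A\oplus N\twoheadrightarrow N\hookrightarrow S_B\oplus N\cong B$ induces on ${}_1E_1$ and on ${}_2E_1$ the tautological identification of the cohomologies of $N$ with those of $A$ and of $B$ (the square summands contributing nothing), hence it is an $E_1$-isomorphism; so $A\simeq_1 B$.

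For ``$\Rightarrow$'', let $f\colon A\to B$ be an $E_1$-isomorphism. First I would observe that, $f$ being a morphism of double complexes that is an isomorphism on ${}_iE_1$ for $i=1,2$ and the higher differentials being natural, an easy induction gives that $f$ induces isomorphisms on ${}_iE_r$ for all $r$ --- compatibly with each $d_r$, hence also on $\operatorname{Im}(d_r)$ and $\ker(d_r)$ --- and therefore on ${}_iE_\infty$. Consequently $f\colon H^{\bullet}_{DR}(A)\to H^{\bullet}_{DR}(B)$ is an isomorphism which is \emph{strict} for the Hodge filtration $F_1$ and for its conjugate $F_2$ (a filtered isomorphism inducing an isomorphism on each associated graded maps every step of each filtration onto the corresponding step). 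It follows that $f$ induces isomorphisms on the functorial quantities
\[
m^{k}_{a,b}(-):=\dim\frac{F_1^{a}H^{k}_{DR}(-)\cap F_2^{b}H^{k}_{DR}(-)}{\bigl(F_1^{a+1}\cap F_2^{b}\bigr)+\bigl(F_1^{a}\cap F_2^{b+1}\bigr)}
\]
and on $\operatorname{Im}(d_l)$ on every page ${}_iE_l$, in each bidegree. I would then compute these quantities on the indecomposables of Theorem~\ref{The:Decomposition} and conclude by additivity over direct sums (Observation~\ref{direct sums and ddbar}): $m^{k}_{a,b}$ should vanish on every square and every even-length zigzag and equal $1$ on precisely one other indecomposable --- a \emph{dot} at $(a,b)$ when $a+b=k$, an odd type-$M$ zigzag of length $2(k-a-b)+1$ positioned at $(a,k-a)$ when $a+b<k$, an odd type-$L$ zigzag of length $2(a+b-k)+1$ positioned at $(k-b,b-1)$ when $a+b>k$ --- while $\dim\operatorname{Im}\bigl(d_l\colon {}_1E_l^{p,q}\to{}_1E_l^{p+l,q-l+1}\bigr)$ should vanish on squares, dots, odd-length zigzags and even type-$2$ zigzags, and equal $1$ on an even type-$1$ zigzag of length $2l$ positioned at $(p,q)$ (symmetrically, with ${}_2E$ replacing ${}_1E$, for even type-$2$ zigzags). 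Granting this, every $\mult_Z(A)$ is expressed through $E_1$-invariant data, hence equals $\mult_Z(B)$.

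The hard part will be exactly this bidegree bookkeeping on the explicit generators listed in Theorem~\ref{The:Decomposition}, and within it one delicate point: one must verify that the total-cohomology class carried by an odd-length zigzag has Hodge level plus conjugate level \emph{equal to} its total degree for a dot, \emph{strictly less} for type $M$, and \emph{strictly greater} for type $L$ --- so that the \emph{sign} of this defect, which is invisible to $H_{\del_1}$ and $H_{\del_2}$ alone (the three families have identical row- and column-cohomology), separates the three families, while its absolute value encodes the length. The remaining checks (vanishing on squares and on the length-$\le 2$ zigzags, and the behaviour of the $d_l$'s) are routine.
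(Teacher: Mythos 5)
The paper gives no proof of this lemma --- it is quoted verbatim from [Ste18, Prop.~11] --- and your argument is a faithful and correct reconstruction of the proof given there: decompose into indecomposables, note that squares are invisible to both spectral sequences (which settles ``$\Leftarrow$'' completely as you write it), and for ``$\Rightarrow$'' read off each multiplicity from an additive invariant preserved by $E_1$-isomorphisms. The bookkeeping you defer does check out: squares and even-length zigzags have $H_{DR}=0$, so all $m^k_{a,b}$ vanish on them; a dot at $(p,q)$, a type-$M$ zigzag of length $2l+1$ with top generator $a_1$ at $(p,q)$, and a type-$L$ zigzag of length $2l+1$ with $a_1$ at $(p,q)$ each contribute $1$ to exactly one $m^k_{a,b}$, namely at $(a,b)=(p,q)$, $(p,\,q-l)$ and $(p+l,\,q+1)$ respectively (the unique cocycle representative $a_1+\dots+a_{l+1}$ in the $M$ case, and the chain of representatives $\del_2 a_1\equiv-\del_1 a_1\equiv\dots\equiv-\del_1 a_l$ in the $L$ case, give exactly the filtration levels you predict), so the sign and absolute value of $a+b-k$ do separate the three odd families and encode length and position; and a type-$1$ (resp.\ type-$2$) even zigzag of length $2l$ has ${}_2E_1=0$ (resp.\ ${}_1E_1=0$) and contributes $1$ to $\dim\operatorname{Im}(d_l)$ in exactly one bidegree of ${}_1E_l$ (resp.\ ${}_2E_l$). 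Since each invariant receives a contribution from exactly one isomorphism class of zigzag, it literally equals that multiplicity (so no cardinal arithmetic is needed even for infinite multiplicities), and the conclusion follows as you state.
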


\begin{Ex} Examples of functors $H$ satisfying the hypotheses of Lemma \ref{Lem: E_r isos in cohomology} are provided by $H_{dR}$, $H_{BC}^{p,q}$, $E_i^{p,q}$ or $H_A^{p,q}$.

\end{Ex}

Thanks to its explicit description given above, one sees that an indecomposable double complex $C$ is determined up to isomorphism by its shape $S(C)=\{(p,q)\in \Z^2\mid C^{p,q}\neq 0\}$. By a slight abuse of notation, we will sometimes conveniently write $\mult_{S}(A)$  instead of $\mult_C(A)$ when $S=S(C)$.\\

We will need the following duality results in the special case $A=A_X$. They follow from the real structure and the Serre duality.

\begin{Lem}([38, Ch. 4])\label{Lem: dualites for dc} Let $A=A_X$ for a compact complex manifold $X$ and define the conjugate complex by $\bar{A}^{p,q}=\overline{A^{p,q}}$ and the dual complex $DA$ by $DA^{p,q}=\operatorname{Hom}(A^{n-p,n-q},\C)$, for all $p,q$.

 Then, conjugation $\omega\mapsto\overline{\omega}$ and integration $\omega\mapsto\int_X\omega\wedge\_$ define an isomorphism, resp. an $E_1$-isomorphism: $$A\cong \bar{A}, \hspace{3ex} \mbox{respectively} \hspace{3ex} A\rightarrow DA.$$

 In particular, the set of zigzags occuring in $A_X$ is symmetric under reflection along the diagonal and the anti-diagonal. More precisely, for any zigzag shape $S$, $\mult_S(A)=\mult_{rS}(A)=\mult_{dS}(A)$, where $rS=\{(p,q)\in\Z^2\mid (q,p)\in S\}$ and $dS:=\{(p,q)\in\Z^2\mid (n-p,n-q)\in S\}$. 

\end{Lem}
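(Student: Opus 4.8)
The statement to prove is Lemma~\ref{Lem: dualites for dc}, asserting that for $A = A_X$ the conjugation map is an isomorphism of double complexes $A \cong \bar A$ and integration gives an $E_1$-isomorphism $A \to DA$, with the consequence about symmetry of zigzag shapes under the reflections $r$ and $d$.

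\medskip

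\textbf{Proof strategy.}

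The plan is to verify the two structural maps directly and then extract the symmetry of multiplicities from general properties of the decomposition in Theorem~\ref{The:Decomposition}.

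First I would treat conjugation. The map $c\colon A^{p,q} \to \bar A^{p,q}$, $\omega \mapsto \bar\omega$, is $\C$-antilinear on $A$ but becomes $\C$-linear when the target is given its conjugate complex structure; this is precisely what the notation $\bar A$ encodes. One checks it intertwines the differentials: on $A_X$ one has $\overline{\partial\omega} = \bar\partial\bar\omega$ and $\overline{\bar\partial\omega} = \partial\bar\omega$, so if $\bar A$ is equipped with $\partial_1^{\bar A} = \bar\partial$-conjugated-appropriately — more precisely, the conjugate complex has its $(1,0)$-differential acting as the conjugate of the original $(0,1)$-differential is NOT what one wants; rather one defines $\bar A$ so that $\partial_1^{\bar A}$ corresponds to $\overline{\partial_1}$ acting on conjugates, i.e. on $\overline{A^{p,q}}$ the operator $\overline{\partial}\colon \overline{A^{p,q}} \to \overline{A^{p,q+1}}$... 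I need to be careful: since $\partial\colon A^{p,q}\to A^{p+1,q}$ and conjugation sends $A^{p,q}$ to a space we relabel as sitting in bidegree $(p,q)$ of $\bar A$, the induced operator $\overline{\partial}$ goes $\bar A^{p,q}\to \bar A^{p+1,q}$, hence plays the role of $\partial_1^{\bar A}$, and likewise $\overline{\bar\partial}$ is $\partial_2^{\bar A}$. With this bookkeeping, $c$ is by construction an isomorphism of double complexes, and being bijective it is trivially an $E_r$-isomorphism for every $r$.

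Next, integration. Define $I\colon A^{p,q} \to DA^{p,q} = \operatorname{Hom}(A^{n-p,n-q},\C)$ by $I(\omega) = \bigl(\eta \mapsto \int_X \omega\wedge\eta\bigr)$. One must (i) identify the differentials on $DA$: the dual complex of a double complex carries differentials which are (signed) transposes of the originals, arranged so that $DA$ is again a double complex with $d^2 = 0$; (ii) check that $I$ is a morphism of double complexes up to the sign conventions built into $DA$ — this is Stokes' theorem, $\int_X d(\omega\wedge\eta) = 0$, unwound into $\int_X \partial\omega\wedge\eta = \pm\int_X \omega\wedge\partial\eta$ and similarly for $\bar\partial$; (iii) observe that $I$ need not be an isomorphism on the chain level (the spaces $A^{p,q}$ are infinite-dimensional), but that on $E_1$-cohomology, $E_1^{p,q}(A) = H^{p,q}_{\bar\partial}(X)$ and $E_1^{p,q}(DA) = H^{n-p,n-q}_{\bar\partial}(X)^\vee$, the induced map is exactly the classical Serre duality pairing, hence an isomorphism; the same holds for the conjugate spectral sequence, which is Serre duality for $\partial$-cohomology (equivalently, the conjugate of the above). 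Thus $I$ is an $E_1$-isomorphism.

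Finally, the consequence about zigzags. By Lemma~\ref{Lem: E_1 isos and zigzags}, an $E_1$-isomorphism preserves all multiplicities $\mult_Z$ of zigzags $Z$ (squares may change, but zigzags do not). Conjugation sends a square or zigzag of shape $S$ to one of shape $rS$ (reflecting $(p,q)\leftrightarrow(q,p)$), so $A\cong\bar A$ gives $\mult_S(A) = \mult_{rS}(A)$ for all zigzag shapes $S$ — in fact for all shapes, since conjugation is an honest isomorphism. Dualization $D$ sends a complex supported on shape $S$ to one supported on $dS = \{(p,q) : (n-p,n-q)\in S\}$, and since $A\simeq_1 DA$, restricting attention to zigzags (where $\simeq_1$ forces equality of multiplicities) yields $\mult_S(A) = \mult_{dS}(A)$. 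Combining the two reflections gives the stated symmetry of the set of occurring zigzags along both the diagonal and the antidiagonal.

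\medskip

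\textbf{Main obstacle.} The routine part is Stokes' theorem; the genuinely delicate point is getting the sign conventions and bidegree relabelings consistent in the definitions of $\bar A$ and $DA$ so that $c$ and $I$ are literally morphisms of double complexes (not just morphisms up to sign or up to a re-indexing), and then confirming that the dual of a zigzag shape is again a zigzag shape of the claimed reflected form. Since the bookkeeping is entirely formal and worked out in [Ste18, Ch.~4], I would cite that reference for the verification rather than reproduce it, as the statement already does. $\hfill\Box$
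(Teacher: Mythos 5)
Your proposal is correct and follows essentially the same route the paper intends (the paper itself only cites [Ste18, Ch.~4]): conjugation as an honest isomorphism of double complexes giving the diagonal symmetry, integration as an $E_1$-isomorphism via classical Serre duality for $\bar\partial$ (and its conjugate) giving the antidiagonal symmetry, and Lemma \ref{Lem: E_1 isos and zigzags} to convert the $E_1$-isomorphism into equality of zigzag multiplicities. The points you flag as delicate (sign conventions in $DA$, the fact that dualization carries a zigzag shape to the reflected zigzag shape, exchanging types $L$ and $M$) are indeed the only bookkeeping left, and deferring them to [Ste18] is consistent with what the paper does.
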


Recall that the complex-$\mathcal{C}^\infty$-pure and -full properties from Remark \ref{rem-terminology}. As a consequence of the above, we obtain

\begin{Prop}\label{Lem: pure<=>full} Fix arbitrary integers $0\leq k\leq 2n$. 

	A compact complex manifold $X$ of dimension $n$ satisfies the complex-$\mathcal{C}^\infty$-pure property in degree $k$ if and only if it satisfies the complex-$\mathcal{C}^\infty$-full property in degree $2n-k$
\end{Prop}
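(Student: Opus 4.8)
The plan is to reduce the statement to the structure theory of \S\ref{subsection:squares-zigzags} and then invoke the anti-diagonal symmetry of the multiset of zigzags occurring in $A_X$. First I would observe that both properties are detected on the zigzag decomposition of $A_X$: the assignments $Z\mapsto H^{k}_{DR}(Z)$ and $Z\mapsto H^{p,q}_{DR}(Z)$ — the latter being the subspace of $H^{p+q}_{DR}(Z)$ spanned by classes admitting a pure-type $(p,q)$ representative — are additive functors vanishing on squares and on all even-length zigzags, in particular on those of length $\leq 2$; hence, using that $A_X$ is $E_1$-isomorphic to a bounded double complex (cf.\ [Ste18]), together with Lemma~\ref{Lem: E_r isos in cohomology} (case $r=1$) and Lemma~\ref{Lem: E_1 isos and zigzags}, one obtains compatible direct-sum decompositions
\[
H^{k}_{DR}(X)=\bigoplus_Z H^{k}_{DR}(Z)^{\oplus\mult_Z(A_X)},\qquad \sum_{p+q=k}H^{p,q}_{DR}(X)=\bigoplus_Z\Big(\sum_{p+q=k}H^{p,q}_{DR}(Z)\Big)^{\oplus\mult_Z(A_X)},
\]
the sums running over the zigzags $Z$ of Theorem~\ref{The:Decomposition} (squares contributing nothing). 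Thus both the directness of $\sum_{p+q=k}H^{p,q}_{DR}(X)$ inside $H^{k}_{DR}(X)$ and its equality with $H^{k}_{DR}(X)$ can be tested one zigzag at a time.

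Second — the computational core — I would run through the zigzags of Theorem~\ref{The:Decomposition}. Even-length zigzags have trivial total cohomology and are irrelevant; a dot in bidegree $(a,b)$ contributes one class, of pure type $(a,b)$, harmless for both properties. For an odd-length zigzag $Z$ of length $2l+1\geq 3$ of type $M$, a short linear-algebra computation on its total complex (of dimension $2l+1$) shows that its total cohomology is one-dimensional and concentrated in a single total degree $d(Z)$ — spanned by the sum of the $l+1$ \emph{source} generators — while $H^{p,q}_{DR}(Z)=0$ for every $(p,q)$; so $Z$ obstructs complex-$\mathcal{C}^\infty$-fullness in degree $d(Z)$ but leaves directness intact. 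For an odd-length zigzag $Z$ of length $2l+1\geq 3$ of type $L$, the analogous computation again gives a one-dimensional total cohomology concentrated in degree $d(Z)$, but now its two extreme \emph{sink} vertices are pure-type $d$-closed forms representing that same nonzero class, whence $H^{p,q}_{DR}(Z)=H^{d(Z)}_{DR}(Z)$ for at least two distinct bidegrees $(p,q)$ with $p+q=d(Z)$; so $Z$ obstructs directness in degree $d(Z)$ but not fullness. The upshot is: $X$ is complex-$\mathcal{C}^\infty$-full in degree $k$ if and only if no type-$M$ zigzag of length $\geq 3$ with $d(Z)=k$ occurs in $A_X$, and $X$ is complex-$\mathcal{C}^\infty$-pure in degree $k$ if and only if no type-$L$ zigzag of length $\geq 3$ with $d(Z)=k$ occurs in $A_X$.

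Finally I would match these two conditions via Lemma~\ref{Lem: dualites for dc}, which gives $\mult_S(A_X)=\mult_{dS}(A_X)$ for every shape $S$. The reflected shape $dS$ of a type-$M$ zigzag of length $2l+1$ ($l\geq 1$) is the shape of a type-$L$ zigzag of the same length — evident from the staircase pictures, or from the fact that the shape-$dS$ indecomposable is the linear dual of the shape-$S$ one and dualizing reverses arrows, hence exchanges sources and sinks — while the passage $S\mapsto dS$ carries a total degree $m$ to $2n-m$, hence a $d$-value $k$ to $2n-k$. Therefore a type-$M$ zigzag of length $\geq 3$ with $d(Z)=k$ occurs in $A_X$ if and only if a type-$L$ zigzag of length $\geq 3$ with $d(Z)=2n-k$ occurs in $A_X$; combining with the second step, $X$ is complex-$\mathcal{C}^\infty$-pure in degree $k$ $\iff$ no type-$L$ zigzag of length $\geq 3$ with $d(Z)=k$ occurs $\iff$ no type-$M$ zigzag of length $\geq 3$ with $d(Z)=2n-k$ occurs $\iff$ $X$ is complex-$\mathcal{C}^\infty$-full in degree $2n-k$, which is the claim. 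The delicate point is the dichotomy in the second step — that a type-$M$ zigzag's unique nonzero De Rham class admits \emph{no} pure-type representative, whereas a type-$L$ zigzag's admits such representatives in more than one bidegree; once this is pinned down, the symmetry argument closes the proof at once.
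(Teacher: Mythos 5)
Your proof is correct and follows essentially the same route as the paper's: decompose $A_X$ into indecomposables, observe that complex-$\mathcal{C}^\infty$-purity (resp.\ fullness) in degree $k$ fails exactly when an odd-length zigzag of type $L$ (resp.\ of type $M$ and length $>1$) with De Rham cohomology in degree $k$ occurs, and conclude via Lemma~\ref{Lem: dualites for dc}, since the antidiagonal reflection exchanges these two types and sends degree $k$ to $2n-k$. The only difference is that you carry out the zigzag cohomology computations explicitly where the paper cites [Ste18, Prop.~6, Cor.~7].
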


\begin{proof}
	Let $Z$ be a zigzag with $H_{dR}^k(Z)\neq 0$. The sum of the subspaces $H_{dR}^{p,q}(Z)$ with $p+q=k$ is not direct if and only if $Z$ is of odd length and of type $L$. Meanwhile, the sum of the subspaces $H_{dR}^{p,q}(Z)$ with $p+q=k$ is strictly contained in $H_{dR}^k(Z)$ if and only if $Z$ is of odd length $>1$ (i.e. not a dot) and of type $M$. We have seen both statements in the course of the proof of Theorem \ref{The: Characterisation zigzags}, see also [38, Prop. 6, Cor. 7]. Hence, $X$ is complex-$\mathcal{C}^\infty$-pure in degree $k$ if and only if $\mult_Z(A_X)=0$ for all odd zigzags $Z$ of type $L$ with $H_{dR}^k(Z)\neq 0$ and $X$ is complex $\mathcal{C}^\infty$-full in degree $k$ if and only if $\mult_Z(A_X)=0$ for all odd zigzags $Z$ of type $M$ and length $>1$ with $H_{dR}^k(Z)\neq 0$. The result now follows from Lemma \ref{Lem: dualites for dc} and Lemma \ref{Lem: E_1 isos and zigzags} since zigzags of type $L$ and those of type $M$ and length greater than $1$ are exchanged when forming the dual complex. \end{proof}

\begin{Cor}\label{Cor: pure and full} For a compact complex manifold $X$, the following statements are equivalent.

\vspace{1ex}

$(1)$\, $X$ satisfies the complex-$\mathcal{C}^\infty$-{\bf pure} property in all degrees;

\vspace{1ex}

$(2)$\, $X$ satisfies the complex-$\mathcal{C}^\infty$-{\bf full} property in all degrees;

\vspace{1ex}

$(3)$\,  The de Rham cohomology of $X$ is {\bf pure} (in the sense of Definition \ref{Def:purity}).

\end{Cor}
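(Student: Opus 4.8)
The plan is to derive the corollary as a purely formal consequence of Proposition~\ref{Lem: pure<=>full} together with the definitions, with all the real content already residing in that proposition.

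First I would unwind condition (3). By Definition~\ref{Def:purity}, saying that the De Rham cohomology of $X$ is pure means that for every $k\in\{0,\dots,2n\}$ one has $H^k_{DR}(X,\C)=\bigoplus_{p+q=k}H^{p,q}_{DR}(X)$, and this single equality encodes two assertions: that the subspaces $H^{p,q}_{DR}(X)$ with $p+q=k$ are in direct sum (i.e. $X$ is complex-$\mathcal{C}^\infty$-pure in degree $k$) and that they span $H^k_{DR}(X,\C)$ (i.e. $X$ is complex-$\mathcal{C}^\infty$-full in degree $k$). Hence (3) is, by definition, equivalent to the conjunction of (1) and (2).

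Next I would establish the equivalence (1)$\iff$(2). The key observation is that $k\mapsto 2n-k$ is an involution of $\{0,\dots,2n\}$. If $X$ is complex-$\mathcal{C}^\infty$-pure in every degree, then in particular it is pure in degree $2n-k$ for each $k$, and Proposition~\ref{Lem: pure<=>full} converts this into fullness in degree $k$; since $k$ was arbitrary, $X$ is full in every degree, giving (1)$\Rightarrow$(2). The reverse implication is obtained symmetrically: fullness in every degree yields fullness in degree $2n-k$ for each $k$, which by Proposition~\ref{Lem: pure<=>full} is equivalent to purity in degree $k$, so $X$ is pure in every degree. Combining, (1)$\iff$(2), and since we have already seen that (3)$\iff$``(1) and (2)'', all three statements are equivalent, which completes the argument.

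There is essentially no obstacle at this stage: the substantive input is Proposition~\ref{Lem: pure<=>full}, whose proof rests on the structure theorem for bounded double complexes (Theorem~\ref{The:Decomposition}) and on the symmetry of the multiset of zigzags occurring in $A_X$ under the duality $S\mapsto dS$ furnished by Serre duality (Lemma~\ref{Lem: dualites for dc}), together with the computation of which zigzag shapes obstruct purity (type $L$, odd length) versus fullness (type $M$, odd length $>1$). The only point requiring mild care in writing up the corollary itself is the bookkeeping with the degree shift $k\leftrightarrow 2n-k$ and the fact that ``in all degrees'' is a hypothesis that must be invoked in both directions of the equivalence.
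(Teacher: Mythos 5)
Your argument is correct and is exactly the one the paper intends: the corollary is stated as an immediate consequence of Proposition \ref{Lem: pure<=>full}, obtained by applying the degree swap $k\leftrightarrow 2n-k$ in both directions and observing that Definition \ref{Def:purity} is by construction the conjunction of purity and fullness in every degree. Nothing is missing.
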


\section{Numerical characterisation of page-$r$-$\del\delbar$-manifolds and applications}\label{section:page-r}

Let $X$ be a compact connected complex manifold. Let $b(X)=\sum_{k\in \Z}b_k(X)$, $h_{BC}(X)=\sum_{p,q\in \Z} h_{BC}^{p,q}(X)$ and define $h_A(X)$, $h_{\del}(X)$ and $h_{\delbar}(X)$ analogously. Angella and Tomassini showed in [8] that there are inequalities: \begin{equation}h_{BC}(X)+h_{A}(X)\overset{(\ast)}{\geq} h_{\delbar}(X)+h_{\partial}(X)\overset{(\ast\ast)}{\geq} 2\, b(X)\label{ATin}\end{equation}

\noindent and that both of these inequalities are equalities if and only if $X$ is a $\partial\bar\partial$-manifold.\\

It is a standard fact about spectral sequences that equality in $(\ast\ast)$ is equivalent to the degeneration at $E_1$ of the Fr\"olicher spectral sequence (and its conjugate). One application of our methods is a generalisation of inequality $(\ast)$ and a characterisation of the equality case in terms of our new classes of manifolds introduced in this paper. 

\begin{Rem} Since $h_{BC}^{p,\,q}(X)=h_{A}^{n-p,\,n-q}(X)$ by duality, one gets $h_{BC}(X)=h_A(X)$ and conjugation yields $h_\partial(X)=h_{\bar\partial}(X)$. Therefore, one can replace (\ref{ATin}) with the equivalent inequalities $h_{BC}(X)\geq h_{\delbar}(X)\geq b(X)$ and have the same characterisations for the equality cases.
\end{Rem}

The following general statement is new. Note that it was stated in the introduction as Theorem \ref{The:page-1-ddbar_numerical-char} for any $r-1\in\N$. We now shift $r-1\in\N$ to $r\in\N$ in the notation. In both cases, we implicitly put $\sum_{i= 1}^0e_i(X) = 0$ for the sake of notation consistency.

\begin{The}\label{The:page-r-num} For every compact complex manifold $X$ and for every $r\in\N$, there is an inequality:

         \[
         h_{BC}(X)\geq \sum_{i= 1}^re_i(X)-(r-1)b(X),
         \]

\noindent where $e_i:=\sum_{p,q\in\Z}\dim E_i^{p,q}(X)$.

Moreover, equality holds for some fixed $r\in\N$ if and only if $X$ is a page-$r$-$\del\delbar$-manifold. 

\end{The}

In particular, for $r=1$, we obtain the characterisation of the equality case in $(\ast)$.

Using the upper-semicontinuity of $h_{BC}$ and $h_A$ in families of manifolds, we infer from Theorem \ref{The:page-r-num} applied with $r=1$ the stability of page-$1$-$\del\delbar$-manifolds with fixed Hodge numbers under small deformations of the complex structure. The analogous statement for $r\geq 2$ and constant $e_i$'s with $i\leq r$ also holds.

\begin{Cor}\label{cor: page-1-defs-hodge-cst} If $X_0$ is a {\bf page-$1$-$\del\delbar$-manifold}, then every sufficiently small deformation $X_t$ of $X_0$ which satisfies $h_{\delbar}(X_t)=h_{\delbar}(X_0)$ is again {\bf page-$1$-$\del\delbar$}.

\end{Cor}

If one drops the condition on constant Hodge numbers, one cannot say much in general. In fact, as we will see, the Iwasawa manifold is page-$1$-$\del\delbar$, but any small deformation with different Hodge numbers is not.\\

In order to prove Theorem \ref{The:page-r-num} we will work with abstract (bounded) double complexes rather than double complexes of forms and prove the following (more general) statement. \\

\noindent {\it For a bounded double complex $A$ with finite-dimensional cohomology, let $_1e_i(A)$, resp. $_2e_i(A)$, be the total dimension of the $i$-th page of the row, resp. column, spectral sequence. There is always an inequality: $$h_{BC}(A)+h_{A}(A)\geq \sum _{i=1}^r ({}_1e_i(A)+{}_2e_i(A))-2(r-1)b(A)$$ and the equality is equivalent to the page-$r$-$\del_1\del_2$ property for $A$.} \\

\noindent {\it Proof of Theorem \ref{The:page-r-num}.} Let us write as a shorthand 

\[
LHS:=h_A+h_{BC} \hspace{3ex} \text{and} \hspace{3ex} RHS_r:=\sum _{i=1}^r ({}_1e_i+{}_2e_i))-2(r-1)b.
\]

Before spelling out the details, we state the general idea, which is very simple: In any decomposition of $A$ into indecomposables, $LHS$ counts all the zigzags occuring in $A$, weighted by their length, except for the dots, which are counted twice. When $r=1$, $RHS_1$ counts all the zigzags twice. For an arbitrary $r$, the count on the right becomes slightly more involved.\\

For the actual proof, we first notice that both $LHS$ and $RHS_r$ are additive under direct sums. Therefore, as in the proof of Theorem \ref{The: Characterisation zigzags}, using Theorem \ref{The:Decomposition}, we may reduce the problem to checking the statement on every possible indecomposable double complex individually. Let us run through the list of Theorem \ref{The:Decomposition} using the notation introduced there.

\vspace{1ex}

$(1)$: For a square $S$, we have already noticed in the proof of Theorem \ref{The: Characterisation zigzags} that one has ${}_1e_i(S)={}_1e_i(S)=b(S)=0$, so $RHS_r=0$ for any $r$. On the other hand, on $S$ we have: $\ker\del_1\delbar=\langle \del_1 a,\del_2 a,\del_1\del_2 a\rangle=\im \del_1+\im\del_2$ and $\ker\del_1\cap\ker\del_2=\langle \del_1\del_2 a\rangle=\im\del_1\del_2$, so $h_{BC}(S)=h_A(S)=0$ and $LHS(S)=0=RHS_r(S)$.

\vspace{1ex}

 $(2)$ \& $(3)$: For any even length zigzag $Z$ of length $l=2k$, we saw in the proof of Theorem \ref{The: Characterisation zigzags} that $b(Z)=0$ and

	\[
	_1e_i(Z)+{}_2e_i(Z)=\begin{cases}
    2 &\text{for }i\leq k\\
    0&\text{otherwise.}
    \end{cases}
    \]

Therefore, \[
RHS_r(Z)=\min\{2r,l\}.\]

\vspace{1ex}

 $(4)$ \& $(5)$: For an odd length zigzag, we also saw that ${}_1e_i(Z)={}_2e_i(Z)=b(Z)=1$ for all $i$, so $RHS_r(Z)=2$, for any $r$.\\

It remains to calculate $LHS$ for zigzags. We distinguish two cases:

For a dot $D$, one has $h_{A}(D)=h_{BC}(D)=1$, so $LHS(D)=2=RHS_r(D)$ for any $r$.

For a zigzag $Z$ of length $l\geq 2$, the Aeppli cohomology is the space of generators $H_A(Z)=\langle \{a_i\}_i\rangle$, i.e. all those spaces lying on the first nonzero anti-diagonal of $Z$, and the Bott-Chern cohomology is the space of their images $H_{BC}(Z)=\langle\del_1 a_i, \del_2 a_i\rangle$, i.e. all those spaces lying on the second anti-diagonal. For example,

\[
H_{A}\left(
\begin{tikzcd}
\langle a_1\rangle\ar[r]&\langle \del_1 a_1\rangle\\
&\langle a_2\rangle\ar[u]
\end{tikzcd}\right)=
\begin{tikzcd}
\langle a_1\rangle&0\\
0&\langle a_2\rangle\end{tikzcd}\quad\text{and} \quad
H_{BC}\left(
\begin{tikzcd}
\langle a_1\rangle\ar[r]&\langle \del_1 a_1\rangle\\
&\langle a_2\rangle\ar[u]
\end{tikzcd}\right)=
\begin{tikzcd}
0&\langle\del_1 a_1\rangle\\
0&0.
\end{tikzcd}
\]

\noindent Thus, one has $LHS(Z)=l$.

\vspace{1ex}

Summing up, we see that for any indecomposable complex $I$, one always has $LHS(I)\geq RHS_r(I)$, but equality only holds for squares, dots and even length zigzags of length $\leq 2r$. Using the characterisation of the page-$r$-$\del\delbar$-property given in Theorem \ref{The: Characterisation zigzags}, this completes the proof.\hfill$\Box$\\

\section{Examples of page-$r$-$\partial\bar\partial$-manifolds and counterexamples}\label{subsection: examples}

We shall organise our examples in several classes, each flagged by a specific heading.

\subsection{The case $r=0$ and low dimensions}\label{subsection:general-fact} The first observation is the following rewording of (5.21) in [15].

\begin{Prop}\label{Prop:dd-bar_page_0_equiv} For any compact complex manifold $X$, the following equivalence holds:

  \vspace{1ex}

 \hspace{9ex}  $X$ is a {\bf $\partial\bar\partial$-manifold} $\iff$ $X$ is a {\bf page-$0$-$\partial\bar\partial$-manifold}.

\end{Prop}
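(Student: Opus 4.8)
The plan is to prove the equivalence by showing that each side is characterised by the same condition on the Fr\"olicher spectral sequence together with purity of the De Rham cohomology. By Theorem and Definition \ref{The-Def:E_r_Hodge-decomp_pureDR} applied with $r=1$, being a page-$0$-$\partial\bar\partial$-manifold means precisely that the Fr\"olicher spectral sequence degenerates at $E_1$ and that the De Rham cohomology is pure. So the task reduces to proving the classical equivalence: $X$ is a $\partial\bar\partial$-manifold (in the sense of Definition \ref{Def:dd-bar-lemma}) if and only if $E_1(X)=E_\infty(X)$ and the De Rham cohomology of $X$ is pure. This is the content of (5.21) in [DGMS75], but I would spell out the argument.

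First I would prove the implication ``$\partial\bar\partial$-manifold $\Rightarrow$ degeneration at $E_1$ and purity''. For degeneration, one uses the $\partial\bar\partial$-lemma in its standard form to show that every $\bar\partial$-closed $(p,q)$-form is cohomologous, in Dolbeault cohomology, to a $d$-closed one; this kills all the higher differentials $d_r$, $r\geq 1$, so $E_1=E_\infty$. For purity, one shows that the canonical maps $H^{p,q}_{\bar\partial}(X)\to H^{p,q}_{DR}(X)\subset H^{p+q}_{DR}(X,\C)$ assemble, over $p+q=k$, into an isomorphism $\bigoplus_{p+q=k}H^{p,q}_{\bar\partial}(X)\xrightarrow{\sim}H^k_{DR}(X,\C)$: surjectivity follows from degeneration plus the fact that a $d$-closed form can be replaced, component by component, by a $d$-closed pure-type form (again $\partial\bar\partial$-lemma), while the directness of the sum follows because a pure-type $d$-closed form that is $d$-exact is $\partial\bar\partial$-exact, hence $\bar\partial$-exact. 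This gives purity in the sense of Definition \ref{Def:purity}.

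For the converse, assume $E_1(X)=E_\infty(X)$ and the De Rham cohomology is pure, and take a $d$-closed pure-type $(p,q)$-form $u$; I must show the four exactness notions coincide. The only non-trivial implication is that $\bar\partial$-exact (or $\partial$-exact, or $d$-exact) implies $\partial\bar\partial$-exact. Here I would argue as in [DGMS75]: degeneration at $E_1$ plus purity force the natural map $H^{p,q}_{\bar\partial}(X)\to H^{p,q}_{DR}(X)$ to be injective — if a pure-type $\bar\partial$-closed form is $d$-exact then its Dolbeault class maps to zero, and injectivity (which is equivalent to the directness of the $H^{p,q}_{DR}(X)$ inside $H^{p+q}_{DR}(X,\C)$, guaranteed by purity) forces it to be $\bar\partial$-exact; a symmetric argument with $\partial$; and then a standard diagram chase (writing $u=\bar\partial\eta$, using $\partial u=0$ so $\partial\eta$ is $\bar\partial$-closed, replacing $\eta$ using the $\partial$-version of injectivity on $H^{p-1,q}$) upgrades $\bar\partial$-exactness to $\partial\bar\partial$-exactness.

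The main obstacle is the last diagram chase: getting from the two injectivity statements to full $\partial\bar\partial$-exactness requires care, because one must simultaneously control the $\partial$- and $\bar\partial$-primitives and iterate the bidegree bookkeeping. An alternative, cleaner route — which I would mention — is to invoke Theorem \ref{The: Characterisation zigzags}: the page-$0$-$\partial_1\partial_2$-property for the double complex $A_X$ means $A_X$ decomposes into squares and dots only, and one checks directly that squares and dots satisfy the usual $\partial\bar\partial$-property while any zigzag of length $\geq 2$ contains a pure-type closed form that is $\partial$- or $\bar\partial$-exact but no nonzero $\partial\bar\partial$-exact element, so the presence of any such zigzag obstructs the $\partial\bar\partial$-property; this is exactly the argument sketched in the Remark following Theorem \ref{The: Characterisation zigzags}, and it circumvents the delicate chase entirely. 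I would present the DGMS-style argument as the main proof and point to the zigzag argument as the conceptual shortcut already noted in the text.
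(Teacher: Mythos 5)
Your overall architecture coincides with the paper's: both reduce the statement, via Theorem and Definition \ref{The-Def:E_r_Hodge-decomp_pureDR} with $r=1$, to the classical equivalence between the $\partial\bar\partial$-property and ($E_1$-degeneration together with purity); both treat the forward implication as a standard exercise; and both establish the equivalence ``$d$-exact $\iff\bar\partial$-exact'' for $d$-closed pure-type forms exactly as you do, from the injectivity of the identity-induced map $E_1^{p,\,q}(X)\to H^{p+q}_{DR}(X,\,\C)$ with image $H^{p,\,q}_{DR}(X)$. The zigzag shortcut you mention at the end is also precisely the one recorded in the Remark following Theorem \ref{The: Characterisation zigzags}, so that fallback is sound and would by itself constitute a complete proof.

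The gap is in your main route, at the step you yourself flag: upgrading $\bar\partial$-exactness to $\partial\bar\partial$-exactness. The chase you sketch (write $u=\bar\partial\eta$, note that $\partial\eta$ is $\bar\partial$-closed, then ``replace $\eta$ using the $\partial$-version of injectivity'') cannot be run with the tools you have set up: the two injectivity statements you proved apply only to $d$-closed pure-type forms, whereas $\partial\eta$ is merely $\bar\partial$-closed, so neither statement applies to it, and iterating does not help. The paper supplies the missing ingredient: $E_1$-degeneration is equivalent to the differential respecting the filtration $\mathcal{F}$ \emph{strictly}, i.e.\ to $d$-exact forms lying in $\mathcal{F}^pC^\infty_k(X,\,\C)$ admitting $d$-potentials in $\mathcal{F}^pC^\infty_{k-1}(X,\,\C)$ (Deligne, Proposition 1.3.2). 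Applied to $\alpha$, whose De Rham class vanishes by purity since it lies in both $F^{p+1}H^k_{DR}(X,\,\C)$ and $\overline{F^qH^k_{DR}(X,\,\C)}$, strictness produces $d$-potentials $u\in\mathcal{F}^pC^\infty_{k-1}(X,\,\C)$ and $v\in\overline{\mathcal{F}^qC^\infty_{k-1}(X,\,\C)}$; purity is then invoked a \emph{second} time, in degree $k-1$, on the $d$-closed form $u-v$, and comparing bidegree-$(p-1,\,q)$ components yields $\alpha=\partial\bar\partial\eta^{p-1,\,q-1}$. If you keep the DGMS-style route as your main proof, you must bring in strictness (or an equivalent) at this point; otherwise, promote the zigzag argument to the main proof.
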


In dimensions one and two, it follows from well-known results that the only possible examples of page-$r$-$\del\delbar$-manifolds are K\"ahler:

\begin{Obs} Any compact complex curve is K\"ahler, hence a $\del\delbar$-manifold.  A compact complex surface is a page-$r$-$\del\delbar$-manifold (for some $r$) if any only if it is K\"ahler.

\end{Obs}

\noindent {\it Proof.} It is standard that the Fr\"olicher spectral sequence of any compact complex surface degenerates at $E_1$. It is equally standard that $H_{dR}^k$ is always pure for $k=0,2,4$, while it follows from the Buchdahl-Lamari results (see [10] and [22]) that $H_{dR}^1$ (and hence $H_{dR}^3$) is pure iff the surface is K\"ahler.  \hfill $\Box$

\subsection{Case of the Iwasawa manifold and its small deformations}\label{subsection:Iwasawa-def}

Recall that the Iwasawa manifold $I^{(3)}$ is the nilmanifold of complex dimension $3$ obtained as the quotient of the Heisenberg group of $3\times 3$ upper triangular matrices with entries in $\C$ by the subgroup of those matrices with entries in $\Z[i]$.

It is well known that the Iwasawa manifold is not a $\partial\bar\partial$-manifold. In fact, its Fr\"olicher spectral sequence is known to satisfy $E_1\neq E_2=E_\infty$. On the other hand, it is known that the de Rham cohomology of the Iwasawa manifold can be generated in every degree by de Rham classes of ($d$-closed) pure-type forms. (See e.g. [2].) Together with Cor. \ref{Cor: pure and full} this yields

\begin{Prop}\label{Prop:Iwasawa_page_1_ddbar} The {\bf Iwasawa manifold} is a {\bf page-$1$-$\partial\bar\partial$-manifold}.

\end{Prop}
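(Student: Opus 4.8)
The plan is to apply Theorem and Definition \ref{The-Def:E_r_Hodge-decomp_pureDR} with $r=2$, so that being a page-$1$-$\partial\bar\partial$-manifold amounts to verifying two things for $X = I^{(3)}$: that the Fr\"olicher spectral sequence degenerates at $E_2$, and that the De Rham cohomology is pure. The first of these is classical and well documented: for the Iwasawa manifold one has $E_1 \neq E_2 = E_\infty$ (this is exactly the statement recalled just above the Proposition, going back to the original computations and reproduced in, e.g., [Ang14]). So the only genuine work is to establish purity of $H^\bullet_{DR}(I^{(3)},\C)$.

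For purity, I would invoke Corollary \ref{Cor: pure and full}, which says that purity of the De Rham cohomology is equivalent to the complex-$\mathcal{C}^\infty$-full property in all degrees (and also to the complex-$\mathcal{C}^\infty$-pure property in all degrees). Thus it suffices to show that in every degree $k$, every De Rham class of $I^{(3)}$ can be represented by a $d$-closed pure-type form — equivalently, that $H^k_{DR}(X,\C) = \sum_{p+q=k} H^{p,q}_{DR}(X)$. This is a finite, explicit check: using the left-invariant forms on the nilmanifold (Nomizu's theorem lets one compute $H^\bullet_{DR}$ from the Chevalley--Eilenberg complex of the underlying nilpotent Lie algebra), one writes down generators of $H^k_{DR}$ for $k = 0,\dots,6$ and exhibits for each a representative that is a sum of $d$-closed forms of pure bidegree. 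This computation is available in the literature (e.g. [Ang14], where the $H^{p,q}_{DR}$ and their relation to $H^k_{DR}$ for the Iwasawa manifold are worked out in detail), so in the write-up I would cite it rather than reproduce the tables.

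Having these two ingredients — degeneration at $E_2$ and purity — condition (ii) of Theorem and Definition \ref{The-Def:E_r_Hodge-decomp_pureDR} is satisfied with $r=2$, hence $X$ has the $E_2$-Hodge Decomposition property, which is by definition the page-$1$-$\partial\bar\partial$-property. I expect the main (in fact only) obstacle to be the verification of purity; while it is a routine explicit computation, it is the step with genuine content, since degeneration at $E_2$ is standard and the passage from the two hypotheses to the conclusion is immediate from the already-established Theorem and Definition \ref{The-Def:E_r_Hodge-decomp_pureDR}. As an alternative to the direct bigraded computation, one could phrase the purity check in the language of Subsection \ref{subsection:squares-zigzags}: it suffices that no odd-length zigzag of type $L$ and no odd-length zigzag of type $M$ and length $>1$ occurs in the double complex $A_{I^{(3)}}$ of left-invariant forms (by Proposition \ref{Lem: pure<=>full} and Corollary \ref{Cor: pure and full}), and this too can be read off from the known structure of that finite-dimensional double complex.
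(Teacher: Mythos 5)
Your proposal is correct and follows essentially the same route as the paper: degeneration of the Fr\"olicher spectral sequence at $E_2$ (standard), the complex-$\mathcal{C}^\infty$-full property in every degree via the explicit computations of [Ang14], Corollary \ref{Cor: pure and full} to upgrade fullness to purity, and then condition (ii) of Theorem and Definition \ref{The-Def:E_r_Hodge-decomp_pureDR} with $r=2$. The only cosmetic point is that the fullness property requires each De Rham class to be a \emph{sum} of classes of $d$-closed pure-type forms, not a single pure-type representative, but your reformulation $H^k_{DR}(X,\C)=\sum_{p+q=k}H^{p,q}_{DR}(X)$ already states this correctly.
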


However, the situation is more complex for the small deformations of the Iwasawa manifold, all of which are already known to not be $\partial\bar\partial$-manifolds. The following result shows, in particular, that unlike the $\partial\bar\partial$-property, the page-$1$-$\partial\bar\partial$-property is {\it not deformation open}.

\begin{Prop}\label{Prop:Iwasawa_page_r_ddbar_small-def} Let $(X_t)_{t\in B}$ be the Kuranishi family of the Iwasawa manifold $X_0$ (see [29], [2]). For every $t\in B$, we have:
\begin{enumerate}
	\item[(1)] $X_t$ is a {\bf page-$1$-$\partial\bar\partial$-manifold} if and only if $X_t$ is {\bf complex parallelisable} (i.e. lies in Nakamura's class (i));

\item[(2)] if $X_t$ lies in one of Nakamura's classes (ii) or (iii), the de Rham cohomology of $X_t$ is {\bf not pure}, so $X_t$ is {\bf not} a {\bf page-$r$-$\partial\bar\partial$-manifold} for any $r\in\N$.
\end{enumerate}

\end{Prop}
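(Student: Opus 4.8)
The plan is to reduce everything to two results established elsewhere in the paper: every complex parallelisable nilmanifold is page-$1$-$\partial\bar\partial$ (Theorem~\ref{The:nilmanifolds_c-par}), and a page-$r$-$\partial\bar\partial$-manifold has pure De Rham cohomology (Theorem and Definition~\ref{The-Def:E_r_Hodge-decomp_pureDR}). Granting these, part (i) follows formally from part (ii): a deformation $X_t$ in Nakamura's class (i) is again a complex parallelisable nilmanifold, hence page-$1$-$\partial\bar\partial$ by Theorem~\ref{The:nilmanifolds_c-par}; conversely, if $X_t$ is page-$1$-$\partial\bar\partial$ then its De Rham cohomology is pure, so by (ii) it lies in neither class (ii) nor class (iii), and Nakamura's classification of the small deformations of $I^{(3)}$ forces it into class (i), i.e. it is complex parallelisable. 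So all the content is in part (ii).

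For (ii) I would first pass to a finite-dimensional model. By Nakamura, every small deformation $X_t$ of $I^{(3)}$ is a nilmanifold carrying a left-invariant complex structure, and by the computations in [Nak75], [Ang14] --- built on Nomizu's theorem for De Rham cohomology together with the verification that the Dolbeault cohomology of each such deformation is also computed by invariant forms --- the inclusion of the finite double complex $\Lambda^{\bullet,\bullet}\mathfrak{g}^{\star}$ of left-invariant forms into $A_{X_t}$ is an $E_1$-isomorphism in the sense of Definition~\ref{Def: E_r-iso}. By Corollary~\ref{Cor: pure and full}, purity of the De Rham cohomology is equivalent to the complex-$\mathcal{C}^\infty$-full property in every degree, which by the proof of Proposition~\ref{Lem: pure<=>full} is a condition on which zigzags occur in the double complex, hence an $E_1$-invariant (Lemma~\ref{Lem: E_1 isos and zigzags}). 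So it suffices to decide purity for the finite double complex $\Lambda^{\bullet,\bullet}\mathfrak{g}^{\star}$.

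I would then read this finite double complex off Nakamura's normal forms for the structure equations of $X_t$ in classes (ii) and (iii), and exhibit in its decomposition an odd-length zigzag of length $\geq 3$ --- equivalently, show that $\sum_{p+q=k}\dim H^{p,q}_{DR}(X_t)<b_k$ for some degree $k$, so that $X_t$ fails to be complex-$\mathcal{C}^\infty$-full. Concretely, upon deforming $I^{(3)}$ into class (ii) or (iii) the length-$4$ zigzags responsible for $E_1\neq E_2$ break up so that an odd zigzag of type $M$ and length $>1$ appears; this is precisely the computation carried out in [Nak75] and [Ang14]. By Proposition~\ref{Lem: pure<=>full} and Corollary~\ref{Cor: pure and full}, the De Rham cohomology of $X_t$ is then not pure, and Theorem and Definition~\ref{The-Def:E_r_Hodge-decomp_pureDR} rules out $X_t$ being page-$r$-$\partial\bar\partial$ for any $r\in\N$.

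The main obstacle is exactly this last explicit computation: one must write down the left-invariant double complex attached to each of Nakamura's normal forms in classes (ii) and (iii) and locate the offending odd zigzag (equivalently, identify the degree in which full-ness fails). This is bookkeeping with structure equations rather than a conceptual step, and it is also where one has to be careful that the reduction to invariant forms is legitimate for the \emph{deformed} complex structures, not only for $I^{(3)}$ itself --- which is what the analysis in [Ang14] secures.
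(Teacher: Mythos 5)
Your reduction is sound and your part (i) is exactly the paper's: class-(i) deformations are complex parallelisable nilmanifolds, hence page-$1$-$\partial\bar\partial$ by Theorem \ref{The:nilmanifolds_c-par}, and the converse follows from part (ii) together with Nakamura's trichotomy. For part (ii), however, the paper takes a noticeably lighter route than the one you sketch. You propose to write down the left-invariant double complex for each of Nakamura's normal forms in classes (ii) and (iii), decompose it, and locate an odd zigzag of type $M$ and length $>1$ (or of type $L$). The paper instead proves a purely numerical lemma: any compact complex manifold with $b_1=4$, $h_{\bar\partial}^{1,0}=h_{\bar\partial}^{0,1}=2$ and $h_A^{1,0}=3$ fails De Rham purity in degree $1$ or $2$. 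The combinatorial engine is the same (only dots contribute pure classes; $h^{1,0}_{\bar\partial}+h^{0,1}_{\bar\partial}=b_1$ kills even zigzags through $(1,0)$ and $(0,1)$; the Aeppli number then forces a length-three zigzag through $(1,0)$, which destroys purity of $H^2_{DR}$), but the input is reduced to four integers read off Angella's tables, with no need to exhibit the offending zigzag or even to decide in which degree purity fails. This is where your plan is weakest: you defer "the main obstacle" to [Nak75] and [Ang14], but those references supply cohomology dimensions, not zigzag decompositions, so the bookkeeping you describe would still have to be done by hand for each normal form. The paper's lemma is precisely the device that converts the available numerical data into non-purity without that computation, and it yields a statement valid beyond the Iwasawa deformations. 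Your remark that the reduction to invariant forms must be secured for the deformed structures (not just for $I^{(3)}$) is well taken and is indeed what Angella's calculations guarantee.
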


\noindent {\it Proof.} That deformations in Nakamura's class $(i)$ are page-$1$-$\partial\bar\partial$-manifolds can be proved in the same way as the Iwasawa manifold was proved to have this property in Proposition \ref{Prop:Iwasawa_page_1_ddbar}. This fact also follows from the far more general Proposition \ref{The:nilmanifolds_c-par} since all the small deformations $X_t$ of $X_0$ are nilmanifolds.

 To show $(2)$, we will actually prove a slightly more general result. Calculations of Angella [2] show that the hypotheses of the next Lemma are satisfied in this case. \hfill $\Box$

\begin{Lem}

Let $X$ be a compact complex manifold with $b_1=4$, $h_{\delbar}^{1,0}=h_{\delbar}^{0,1}=2$ and $h_{A}^{1,0}=3$. Then, either $H^1_{dR}(X,\C)$ or $H^2_{dR}(X,\C)$ is not pure.

\end{Lem}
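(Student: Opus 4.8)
The plan is to argue by contradiction, assuming both $H^1_{DR}(X,\C)$ and $H^2_{DR}(X,\C)$ are pure, and to play the structure theory of the double complex $A_X=(C^\infty_{\bullet,\bullet}(X),\partial,\bar\partial)$ against the numerical hypotheses. By Theorem \ref{The:Decomposition} I would write $A_X$ as a direct sum of squares and zigzags, so that, by Observation \ref{direct sums and ddbar}, the Betti, Dolbeault and Aeppli numbers and the (non-)purity of each $H^k_{DR}$ are all additive over the multiplicities $\mult_S(A_X)$ of the indecomposable shapes $S$. The first ingredient is the elementary dictionary: squares and even-length zigzags contribute $0$ to every $H^k_{DR}$; a dot at bidegree $(p,q)$ contributes $1$ to each of $b_{p+q}$, $h^{p,q}_{\bar\partial}$, $h^{p,q}_{\partial}$ and $h^{p,q}_A$; and, by the reasoning in the proof of Proposition \ref{Lem: pure<=>full} (equivalently [Ste18, Prop. 6, Cor. 7]), an odd-length zigzag of length $\geq 3$ has both of its endpoints in a single total degree $k$ and one-dimensional De Rham cohomology concentrated there, where it defeats directness of $H^k_{DR}$ if it is of type $L$ and fullness if it is of type $M$. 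In particular, purity of $H^k_{DR}$ forbids every odd zigzag of length $\geq 3$ with endpoints in degree $k$.

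Next I would localise near total degree $1$. Since $X$ is connected, $h^{0,0}_{\bar\partial}=h^{0,0}_{\partial}=1$, and this one class is carried by the unique dot at $(0,0)$; thus the only other indecomposable summands that can have a vertex at $(0,0)$ are the dominoes $(0,0)-(1,0)$ and $(0,0)-(0,1)$ --- excluded because they would push $h^{0,0}_{\bar\partial}$, resp. $h^{0,0}_{\partial}$, above $1$ --- and the type-$L$ corner at $(0,0)$ with arms at $(1,0)$ and $(0,1)$ --- excluded by the purity of $H^1_{DR}$, since its endpoints lie in degree $1$. As a zigzag occupies exactly two consecutive total degrees, it follows that, apart from dots at $(1,0)$ or $(0,1)$, every summand meeting total degree $1$ is confined to total degrees $\{1,2\}$; such a summand is a connected staircase among the five bidegrees $(2,0),(1,0),(1,1),(0,1),(0,2)$, which form the path $(2,0)-(1,0)-(1,1)-(0,1)-(0,2)$. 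Of its connected subpaths, the three of length $3$ and the one of length $5$ are odd zigzags of type $L$ with endpoints in degree $2$ or of type $M$ with endpoints in degree $1$, hence forbidden by the purity of $H^2_{DR}$, resp. $H^1_{DR}$; what remains, besides dots, are the four dominoes $(1,0)-(2,0)$, $(0,1)-(0,2)$, $(1,0)-(1,1)$, $(0,1)-(1,1)$ and the two length-$4$ zigzags $(2,0)-(1,0)-(1,1)-(0,1)$ and $(1,0)-(1,1)-(0,1)-(0,2)$ (the dots at the three degree-$2$ bidegrees play no role in the invariants used below). By the reflection symmetry $\mult_S(A_X)=\mult_{rS}(A_X)$ of Lemma \ref{Lem: dualites for dc}, I write $\delta,\alpha,\beta,\gamma$ for the multiplicities of the dot at $(1,0)$, the domino $(1,0)-(2,0)$, the domino $(1,0)-(1,1)$, and the zigzag $(2,0)-(1,0)-(1,1)-(0,1)$ respectively; by that symmetry these same four numbers also count the mirror summands meeting $(0,1)$.

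Finally I would compute. Reading the Dolbeault, Aeppli and De Rham dimensions of these small indecomposables straight off the definitions gives
\[
b_1=2\delta,\qquad h^{1,0}_{\bar\partial}=\delta+\alpha,\qquad h^{0,1}_{\bar\partial}=\delta+\beta+\gamma,\qquad h^{1,0}_A=\delta+\alpha+\beta+2\gamma .
\]
The hypothesis $b_1=4$ forces $\delta=2$; feeding this into $h^{1,0}_{\bar\partial}=2$ gives $\alpha=0$, and into $h^{0,1}_{\bar\partial}=2$ gives $\beta=\gamma=0$; but then $h^{1,0}_A=2$, contradicting $h^{1,0}_A=3$. Hence at least one of $H^1_{DR}(X,\C)$, $H^2_{DR}(X,\C)$ is not pure. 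The one place that demands genuine care is the enumeration in the second paragraph --- checking that the list of connected staircases confined to degrees $\{1,2\}$ is exhaustive (this is exactly where Theorem \ref{The:Decomposition} and the fact that an indecomposable double complex is determined by its shape get used), and that the length-$3$ and length-$5$ ones are correctly identified as type $L$ or type $M$ and assigned the right cohomological degree; the rest is routine bookkeeping with the standard zigzag tables.
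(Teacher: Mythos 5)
Your proof is correct and follows essentially the same route as the paper's own argument: decompose $A_X$ into squares and zigzags, use purity of $H^1_{DR}$ and $H^2_{DR}$ together with the Dolbeault numbers to constrain the zigzags meeting total degree $1$, and derive a numerical contradiction from $h_A^{1,0}=3$. The only difference is organisational: the paper excludes all even-length zigzags at once via $h^{1,0}_{\bar\partial}+h^{0,1}_{\bar\partial}=b_1$ and concludes that a type-$L$ odd zigzag must occur (violating purity of $H^2_{DR}$), whereas you keep the dominoes and length-$4$ zigzags as unknown multiplicities and eliminate them by solving the resulting linear system.
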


\noindent {\it Proof.} The proof is combinatorial. We will exploit the fact that the de Rham, Dolbeault and Aeppli cohomologies of indecomposable complexes are computable. This is spelt out in detail in [38]. Summarised briefly, an even-length zigzag has a nonzero differential in the Fr\"olicher spectral sequence or its conjugate, but has no de Rham cohomology. Meanwhile, odd-length zigzags have no differentials in the Fr\"olicher spectral sequence, but have a nonzero de Rham cohomology and $h_A^{p,q}$ counts the zigzags that have a nonzero component in degree $(p,q)$ with possibly outgoing but no incoming arrows.

Specifically, denote by $A=(C^\infty_{p,\,q}(X),\del,\delbar)$ the double complex of $\C$-valued forms on $X$. We investigate for which zigzag shapes $S$ with $(1,0)\in S$ or $(0,1)\in S$, one can have $\mult_S(A)\neq 0$. Assume $H_{dR}^1(X,\C)$ is pure. That means that any odd zigzag contributing to the de Rham cohomology $H^1_{dR}(X,\C)$ is of length one, i.e. drawing only the odd zigzags and not squares or even ones, the lower part of the double complex looks like this:

\[
\parbox{3cm}{\begin{tikzpicture}[scale=0.5]
		
		\draw[->, thick] (0,0) -- (6.5,0) node[anchor=west] {$p$};
		\draw (0,2) -- (4.5,2);
		\draw (0,4) -- (2.5,4);
		%\draw (0,6) -- (6,6);
		
		\draw[->, thick] (0,0) -- (0,6.5) node[anchor=south]{$q$};
		\draw (2,0) -- (2,4.5);
		\draw (4,0) -- (4,2.5);
		%\draw (6,0) -- (6,6);
		
		\draw (1,-.5) node {0};
		\draw (3,-.5) node {1};
		\draw (5,-.5) node {2};
		
		\draw (-.5,1) node {0};
		\draw (-.5,3) node {1};
		\draw (-.5,5) node {2};
		
		\node (11) at (4,4){$\iddots$};
		\node (00) at (1,1) {$\bullet$};
		\node (01) at (1,2.5) {$\bullet$};
		\node (10) at (2.5,1) {$\bullet$};
		\node (10') at (3.5,1) {$\bullet$};
		\node (01') at (1,3.5) {$\bullet$};
		
\end{tikzpicture}}
\]
Here, a $\bullet$ denotes a zigzag of length one and multiplicity one. The symmetry along the diagonal comes from the real structure of $A$ given by complex conjugation. A priori, there may be other zigzags passing through $(1,0)$ and $(0,1)$. Schematically, these would all arise by choosing some connected subgraph with at least one arrow of the diagram

\[
\begin{tikzcd}
	\bullet & &\\
	\bullet^{0,1}\ar[u,dashed,"\delbar"]\ar[r,dashed,"\del"]&\bullet&\\
	&\bullet^{1,0} \ar[u,dashed,"\delbar"]\ar[r,dashed,"\del"]&\bullet
\end{tikzcd}
\]

They could either be of even length or of odd length but not contributing to $H^1_{dR}(X,\C)$ but to $H^2(X,\C)$. Note that the subdiagram 

\[
\begin{tikzcd}
	\bullet^{1,0}\ar[r,"\del"]&\bullet\\
	&\bullet^{0,1}\ar[u,"\delbar"]
\end{tikzcd}
\]
is not allowed since this would be give rise to a nonzero class in $H^1_{dR}(X,\C)$, which we have ruled out already by purity.

However, since $h^{1,0}_{\delbar}+h_{\delbar}^{0,1}=b_1$, there can be no differentials in the Fr\"olicher spectral sequence starting or ending in degree $(1,0)$ or $(0,1)$. In terms of zigzags, this means no even-length zigzag passes through these bidegrees. This rules out the zigzags
\[
\begin{tikzcd}
	\bullet^{0,1}\ar[r,"\del"]&\bullet&\\
	&\bullet^{1,0} \ar[u,"\delbar"]\ar[r,"\del"]&\bullet
\end{tikzcd}
\qquad
\begin{tikzcd}
	\bullet\ar[r,"\del"]&\bullet^{1,0}
\end{tikzcd}
\qquad
\begin{tikzcd}
	\bullet^{1,0}\ar[r,"\del"]&\bullet
\end{tikzcd}
\]
and their reflections along the diagonal (which have to occur with the same multiplicity since $A$ is equipped with a real structure).
So, the only options for zigzags passing through $(1,0)$ that are left are
\[
\begin{tikzcd}
	\bullet & &\\
	\bullet^{0,1}\ar[u,"\delbar"]\ar[r,"\del"]&\bullet&\\
	&\bullet^{1,0} \ar[u,"\delbar"]\ar[r,"\del"]&\bullet
\end{tikzcd}
\quad\text{ or }\qquad 
\begin{tikzcd}
	\bullet&\\
	\bullet^{1,0} \ar[u,"\delbar"]\ar[r,"\del"]&\bullet
\end{tikzcd}
\]
and one of these has to occur since otherwise $H_{A}^{1,0}$ would be of dimension $2$, contradicting the assumptions. But the occurence of either one implies that $H^2_{dR}(X,\C)$ is not pure.
\hfill $\Box$

\subsection{Case of complex parallelisable nilmanifolds}\label{subsection:complex-par}

We will now prove that all complex parallelisable nilmanifolds are page-$1$-$\partial\bar\partial$-manifolds. On the one hand, this generalises one implication in $(1)$ of Proposition \ref{Prop:Iwasawa_page_r_ddbar_small-def}. On the other hand, it provides a large class of page-$1$-$\partial\bar\partial$-manifolds that are not $\partial\bar\partial$-manifolds. Indeed, it is known that a nilmanifold $\nilm$ is never $\partial\bar\partial$ (or even formal in the sense of [15]) unless it is K\"ahler (i.e. a complex torus, or equivalently, the Lie group $G$ is abelian) [19].

Recall that a compact complex parallelisable manifold $X$ is a manifold whose holomorphic tangent bundle is trivial. By Wang's theorem [41], $X$ is a quotient $\nilm$ of a {\it complex} Lie group $G$ by a co-compact, discrete subgroup $\Gamma$. When $G$ is nilpotent, the manifold $X$ is a complex parallelisable nilmanifold. The Iwasawa manifold is an example of this type. We first need an algebraic result.

\begin{Lem}\label{Lem tensor product page 1 ddbar}

	Let $(A^\Cdot,d_A)$ and $(B^\Cdot,d_B)$ be two complexes of vector spaces and $C=A\otimes B$ their tensor product, considered as a double complex, i.e.:

	\begin{align*}
	C^{p,q}&:=A^p\otimes B^q\\
	\del_1 (a\otimes b)&:=d_Aa\otimes b\\
	\del_2(a\otimes b)&:=(-1)^{|a|} a\otimes d_B b
	\end{align*}

Then $C$ satisfies the page-$1$-$\del_1 \del_2$-property.

\end{Lem}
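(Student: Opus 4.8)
The cleanest route is via the structural classification of Theorem \ref{The: Characterisation zigzags}: I would show that the tensor product double complex $C = A \otimes B$ decomposes into squares, dots, and even-length zigzags of length exactly $2$, which is precisely the $r=1$ case of condition (2) there. The idea is to first reduce each factor to its homological building blocks. Since $K$ is a field, each complex $(A^\bullet, d_A)$ splits (non-canonically) as a direct sum of its homology, placed in the appropriate degrees with zero differential, and a sum of ``short'' two-term acyclic complexes $K \xrightarrow{\operatorname{id}} K$ sitting in consecutive degrees; likewise for $(B^\bullet, d_B)$. Because $\otimes$ distributes over $\oplus$ and, by Observation \ref{direct sums and ddbar}, the page-$1$-$\partial_1\partial_2$-property is preserved by direct sums, it suffices to check the property for the tensor product of two indecomposable one-variable pieces.

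\textbf{Case analysis on the factors.} There are only four combinations to treat, each an honest computation on a tiny double complex. If $A$ is a single $K$ in one degree with zero differential and $B$ is likewise, then $C$ is a single $K$ in one bidegree with zero $\partial_1,\partial_2$ — this is a dot. If $A$ is $(K \xrightarrow{\operatorname{id}} K)$ (a two-term acyclic complex, rows) and $B$ is a single $K$, then $C$ is a horizontal two-term complex $K \xrightarrow{\operatorname{id}} K$ with no vertical differential — this is an even-length zigzag of length $2$ (a horizontal ``arrow''), and symmetrically for $A$ a dot, $B$ acyclic we get a vertical arrow. Finally, if both $A$ and $B$ are the acyclic $(K \xrightarrow{\operatorname{id}} K)$, then $C$ is the $2\times 2$ diagram $K \to K$, $K \to K$ with all four maps isomorphisms and both squares commuting (up to the sign in the definition of $\partial_2$, which is harmless) — this is exactly a \textbf{square}. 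In every case the resulting indecomposable is a square, a dot, or a length-$2$ even zigzag, so by Theorem \ref{The: Characterisation zigzags} the page-$1$-$\partial_1\partial_2$-property holds for the piece, hence for all of $C$.

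\textbf{Main obstacle and alternative.} The one genuinely delicate point is justifying the splitting of each one-variable complex over $K$ into homology plus acyclic two-term complexes; this is standard linear algebra (choose a complement to $\ker d$, a complement to $\operatorname{im} d$ inside $\ker d$, etc.), but one must take care that it works in each degree compatibly so that the global decomposition of $A$ is a genuine direct sum of subcomplexes, and similarly for $B$ — and then that $\otimes$ of such decompositions is again a direct sum of subcomplexes of $C$ of the claimed four shapes. If one prefers to avoid the classification machinery, there is a direct alternative: the Künneth theorem gives $H_{DR}^k(C) = \bigoplus_{p+q=k} H^p(A)\otimes H^q(B)$, and one checks by hand that the first Frölicher spectral sequence of $C$ is the one computing column cohomology of $A\otimes B$, which on $E_1$ reads $H^p(A) \otimes B^q$ with differential $\operatorname{id}\otimes d_B$, so $E_2^{p,q} = H^p(A)\otimes H^q(B) = E_\infty^{p,q}$; the conjugate (second) spectral sequence degenerates at $E_2$ symmetrically, and purity of $H_{DR}(C)$ follows because the $(p,q)$-classes are exactly $H^p(A)\otimes H^q(B)$, which visibly sum directly to $H_{DR}^k(C)$. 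Either way the content is the same; I would present the classification-based argument as the primary one since it is shortest and matches the language of \S\ref{subsection:squares-zigzags}.
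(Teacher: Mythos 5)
Your proposal is correct, but your primary argument takes a genuinely different route from the paper: the paper's proof is precisely your ``direct alternative''. It computes the first page of the column spectral sequence as $A^p\otimes H^q(B,d_B)$ with $d_1=d_A\otimes\Id_{H(B)}$, deduces ${}_1E_2^{p,q}=H^p(A)\otimes H^q(B)$ (the row case being symmetric), observes that closed-tensor-closed elements give $d$-closed representatives and that the induced map $\bigoplus_{p+q=k}H^p(A)\otimes H^q(B)\to H_{DR}^k(C)$ is the K\"unneth isomorphism; this verifies the representative-plus-bijectivity reformulation of the page-$1$-$\del_1\del_2$-property directly. Your primary route --- splitting each one-variable complex over the field $K$ into dots plus contractible two-term pieces and tensoring to get only dots, length-$2$ zigzags and squares --- is also valid and arguably more in the spirit of \S\ref{subsection:squares-zigzags}: it yields strictly more information, namely the full list of indecomposable summands of $C$, and it is the mechanism behind the paper's later remark that ``even-length zigzags do not get longer'' under tensor products ([Ste18, Prop.\ 16]), which is invoked in the proof of Theorem \ref{The:geom-prop_page-r-ddbar}. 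Two small cautions for your version: Theorem \ref{The: Characterisation zigzags} is stated for \emph{bounded} complexes, so for unbounded $A,B$ you should invoke only the easy direction (check the four elementary tensor shapes by hand and use compatibility of all the cohomologies with direct sums, as in Observation \ref{direct sums and ddbar}), which needs no boundedness; and the splitting of a one-variable complex into homology plus two-term acyclic pieces, while standard over a field, does deserve the explicit choice of complements you indicate. The paper's computation avoids both issues at the cost of not exhibiting the decomposition.
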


\begin{proof}

	First, we compute the first and second pages of the column Fr\"olicher spectral sequence. (We only treat the column case, the row case being analogous.) The first page is the column cohomology:

	\[
	(_1E_1^{\Cdot,\Cdot},d_1)=(H^q(C^{p,\Cdot},\del_2),\del_1 )
	\]

	Since $\del_2$ is, up to sign, $\Id_A\otimes d_B$, one has $H^q(C^{p,\Cdot},\del_2)=A^p\otimes H^q(B,d_B)$ and $d_1=d_A\otimes\Id_{H(B)}$. Therefore, $_2E_2^{p,q}=H^p(A,d_A)\otimes H^q(B,d_B)$. Now, for every $d_A$-closed element $a\in A^p$ and every $d_B$-closed element $b\in B^q$, the element $a\otimes b\in C^{p+q}$ is $d=\del_1 +\del_2$ closed. Similarly, if one of the two is $d_A$ or $d_B$ exact, the form $a\otimes b$ will be $d$-exact. Hence we get a natural map $\bigoplus_{p+q=k}H^p(A,d_A)\otimes H^q(B,d_B)\rightarrow H_{dR}^k(C)$. Since we are working over a field, the K\"unneth formula tells us that this is an isomorphism.\end{proof}

Given a complex parallelisable nilmanifold $\nilm$, let $\frg$ be the (real) Lie algebra of $G$, and denote by $J\colon \frg \rightarrow \frg$ the endomorphism induced by the complex structure of the Lie group $G$. Then $J^2=-{\rm Id}$ and

\begin{equation}\label{j}
[Jx,y]=J[x,y],
\end{equation} for all $x,y\in \frg$. Let $\gc^*$ be the dual of the complexification $\gc$ of $\frg$ and denote by $\frg^{1,0}$ (respectively $\frg^{0,1}$) the eigenspace of the eigenvalue $i$ (resp. $-i$) of $J$ considered as an endomorphism of $\gc^*$. Condition \eqref{j} is equivalent to $[\frg^{0,1},\frg^{1,0}]=0$ which is equivalent to $d(\frg^{1,0}) \subset \bigwedge\,^{\!\!\!2}\,(\frg^{1,0})$, i.e. there is no component of bidegree $(1,1)$. Therefore, $\bar\partial$ is identically zero on $\bigwedge\,^{\!\!\!p}\,(\frg^{1,0})$ and $\partial$ is identically zero on $\bigwedge\,^{\!\!\!q}\,(\frg^{0,1})$, that is,

\begin{equation}\label{rels}
\partial_{\mid_{\bigwedge\,^{\!\!\!p}(\frg^{1,0})}} = d_{\mid_{\bigwedge\,^{\!\!\!p}(\frg^{1,0})}},
\quad \quad
\bar\partial_{\mid_{\bigwedge\,^{\!\!\!p}(\frg^{1,0})}} = 0,
\quad \quad
\partial_{\mid_{\bigwedge\,^{\!\!\!q}(\frg^{0,1})}} = 0,
\quad \mbox{ and } \quad
\bar\partial_{\mid_{\bigwedge\,^{\!\!\!q}(\frg^{0,1})}} = d_{\mid_{\bigwedge\,^{\!\!\!q}(\frg^{0,1})}}.
\end{equation}

\begin{The}\label{The:nilmanifolds_c-par} Complex parallelisable nilmanifolds are page-$1$-$\partial\bar\partial$-manifolds.

\end{The}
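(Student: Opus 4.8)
The plan is to reduce the statement to the purely algebraic Lemma \ref{Lem tensor product page 1 ddbar} by exhibiting the relevant double complex of invariant forms as a tensor product of two complexes. First I would invoke the standard fact (Nomizu-type theorem, see the references cited in \S\ref{subsubsection:nilm-def}) that for a complex parallelisable nilmanifold $X=\nilm$ the Dolbeault and De Rham cohomologies, and hence all pages of the Fr\"olicher spectral sequence, can be computed from the finite-dimensional subcomplex $A:=\big(\bigwedge^{\Cdot}\frg^{*}_{\C},\,\partial,\,\bar\partial\big)$ of invariant forms. (This is where the nilpotency of $G$, not merely its being complex, is used.) Thus it suffices to prove that this invariant double complex $A$ satisfies the page-$1$-$\partial_1\partial_2$-property in the sense of the definition preceding Observation \ref{direct sums and ddbar}.

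Next I would use the relations \eqref{rels}: since $\bar\partial$ vanishes on $\bigwedge^{\Cdot}\frg^{1,0}$ and $\partial$ vanishes on $\bigwedge^{\Cdot}\frg^{0,1}$, and since $\bigwedge^{\Cdot}\frg^{*}_{\C}\cong\bigwedge^{\Cdot}\frg^{1,0}\otimes\bigwedge^{\Cdot}\frg^{0,1}$ as bigraded vector spaces (with the $(p,q)$-piece being $\bigwedge^{p}\frg^{1,0}\otimes\bigwedge^{q}\frg^{0,1}$), the operator $\partial$ acts only on the first factor and $\bar\partial$ only on the second. More precisely, setting $A^{\Cdot}:=(\bigwedge^{\Cdot}\frg^{1,0},\,\partial)$ and $B^{\Cdot}:=(\bigwedge^{\Cdot}\frg^{0,1},\,\bar\partial)$ — both honest complexes because $d=\partial$ on the first and $d=\bar\partial$ on the second, so $\partial^2=\bar\partial^2=0$ there — the invariant double complex is isomorphic to $A\otimes B$ with $\partial_1=\partial\otimes\Id$ and $\partial_2=\pm\Id\otimes\bar\partial$, exactly the setup of Lemma \ref{Lem tensor product page 1 ddbar}. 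One small point to check is the Koszul sign: the wedge product identification introduces a sign $(-1)^{|a|}$ when moving $\bar\partial$ past $a\in\bigwedge^{p}\frg^{1,0}$, which matches the sign convention in the definition of $\partial_2$ in that lemma. Applying the lemma then gives that $A\otimes B$, and hence the full form complex of $X$, satisfies the page-$1$-$\partial\bar\partial$-property.

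Finally, I would translate back: by Theorem and Definition \ref{The-Def:E_r_Hodge-decomp_pureDR}, the page-$1$-$\partial\bar\partial$-property of $X$ means exactly that the Fr\"olicher spectral sequence degenerates at $E_2$ and the De Rham cohomology is pure, which is what the tensor-product lemma delivers for the invariant complex, and this transfers to $X$ itself by the Nomizu-type comparison. I expect the main obstacle to be a conceptual rather than computational one: making sure the comparison isomorphism between invariant-form cohomology and honest cohomology is compatible with the Fr\"olicher spectral sequence and with the purity of De Rham cohomology (i.e. that it preserves the bigrading and the $d_r$ differentials), so that page-$1$-$\partial\bar\partial$-ness of the small invariant model genuinely implies it for the manifold. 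Since purity, degeneration, and the Hodge-decomposition maps are all formulated in terms of the double complex, once one knows the inclusion $A\hookrightarrow A_X$ is an $E_1$-isomorphism (equivalently a quasi-isomorphism on each page), Lemma \ref{Lem: E_r isos in cohomology} with $H=E_r^{p,q},H_{DR},\dots$ closes the gap; verifying that this inclusion really is an $E_1$-isomorphism for complex parallelisable nilmanifolds is the one external input the argument rests on.
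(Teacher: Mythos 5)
Your proposal is correct and follows essentially the same route as the paper: the external input that the inclusion of left-invariant forms is an $E_1$-isomorphism (Sakane's theorem, [Sak76]), the observation via \eqref{rels} that the invariant double complex is the tensor product of the simple complexes $(\bigwedge^{\Cdot}\frg^{1,0},d)$ and $(\bigwedge^{\Cdot}\frg^{0,1},d)$, and an application of Lemma \ref{Lem tensor product page 1 ddbar}. The transfer step you single out as the main obstacle is handled in the paper exactly as you suggest, by noting that degeneration, purity and the page-$1$-$\partial\bar\partial$-property are all detected by the zigzag decomposition and hence preserved under $E_1$-isomorphisms.
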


\begin{proof} Sakane [36] showed that the inclusion of the double complex $(\bigwedge^{\Cdot,\Cdot}\gc^*,\del,\delbar)$ as left invariant forms into the complex of all forms on $\Gamma\backslash G$ induces an isomorphism of the respective first pages of the corresponding Fr\"olicher spectral sequences (hence of all later pages). But the equations \eqref{rels} mean that the double complex $(\bigwedge^{\Cdot,\Cdot}\gc^*,\del,\delbar)$ is the tensor product of the simple complexes $(\bigwedge^\Cdot\frg^{1,0},d)$ and $(\bigwedge^\Cdot,\frg^{0,1},d)$, so we can apply Lemma \ref{Lem tensor product page 1 ddbar}.

\end{proof}

\subsection{Nilmanifolds with abelian complex structures}

In this subsection, we construct two classes of page-$1$-$\del\delbar$-manifolds which are not biholomorphic to complex parallelisable nilmanifolds (see Remarks~\ref{Dolb-A1-A2} and~\ref{sGG-A1-A2}). Indeed, they are nilmanifolds endowed with an invariant complex structure that is \emph{abelian}, which means that, in contrast to \eqref{rels}, $\partial$ vanishes on left-invariant $(p,0)$-forms, i.e. $\partial_{\mid_{\bigwedge\,^{\!\!\!p}(\frg^{1,0})}} = 0$.

\begin{The}\label{AbelianNilmanifolds}

	Let $n\geq 3$ and $G$ be the nilpotent Lie group with abelian complex structure defined by the structure equations

\vskip.2cm

$(Ab1_n)$	\  $d\omega^1=0, \ d\omega^2=0,\ 	d\omega^3=\omega^2\wedge\overline{\omega^1},\ldots,\ d\omega^n=\omega^{n-1}\wedge\overline{\omega^1}$,

\vskip.2cm

\noindent or

\vskip.2cm

$(Ab2_n)$ \ $d\omega^1=0,\ldots,\ d\omega^{n-1}=0,\ 	d\omega^{n}=\omega^1\wedge\overline{\omega^2} +\omega^3\wedge\overline{\omega^4}+\cdots+\omega^{n-2}\wedge\overline{\omega^{n-1}}$\ \  (only for odd $n\geq 3$).

\vskip.2cm

\noindent Then, any nilmanifold $\Gamma\backslash G$ is a page-$1$-$\del\delbar$-manifold.

\end{The}

\begin{proof} For every $1\leq k\leq n$, we write $\omega^k=e^k+i\,f^k$, where $e^k$ and $f^k$ are the real part and the imaginary part of the complex (1,\,0)-form $\omega^k$, respectively. We start by working out the real structure equations of the Lie group $G$ in the basis of left-invariant 1-forms $\{e^1,f^1,\ldots,e^{n},f^{n}\}$.
\vspace{1ex}

In the case $(Ab1_n)$, we first notice that, for $3\leq k\leq n$,
$$
\omega^{k-1}\wedge\overline{\omega^1}=(e^{k-1}+i\,f^{k-1})\wedge(e^1-i\,f^1)=-(e^{1}\wedge e^{k-1}+f^{1}\wedge f^{k-1})-i\,(e^{1}\wedge f^{k-1}-f^{1}\wedge e^{k-1}).
$$ 
Hence, the real structure equations are, for $3\leq k\leq n$,
$$
\left\{\begin{array}{ccl}
d e^1 \!\!&\!\!=\!\!&\!\! d f^1=d e^2=d f^2=0,\\[4pt]
d e^k \!\!&\!\!=\!\!&\!\! - e^{1}\wedge e^{k-1} - f^{1}\wedge f^{k-1},\\[4pt]
d f^k \!\!&\!\!=\!\!&\!\! - e^{1}\wedge f^{k-1} + f^{1}\wedge e^{k-1}.
\end{array}\right.
$$
Since the structure constants in this basis are $0,\pm 1$, in particular rational numbers, a result by Mal'cev [26] implies the existence of a lattice $\Gamma$ for $G$. Thus, we get a nilmanifold $\Gamma\backslash G$ endowed with an abelian complex structure.

\vspace{1ex}

We now consider the case $(Ab2_n)$. Writing $n=2m+1$, the last complex equation in $(Ab2_n)$ is $d\omega^{2m+1}=\sum_{k=1}^m \omega^{2k-1}\wedge \overline{\omega^{2k}}$. A direct calculation shows that the real structure equations of the Lie group in the basis of left-invariant 1-forms $\{e^1,f^1,\ldots,e^{n},f^{n}\}$ are

$$
\left\{\begin{array}{ccl}
d e^1 \!\!&\!\!=\!\!&\!\! d f^1=\cdots=d e^{2m}=d f^{2m}=0,\\[4pt]
d e^{2m+1} \!\!&\!\!=\!\!&\!\! \sum_{k=1}^m (e^{2k-1}\wedge e^{2k} + f^{2k-1}\wedge f^{2k}),\\[4pt]
d f^{2m+1} \!\!&\!\!=\!\!&\!\! \sum_{k=1}^m (-e^{2k-1}\wedge f^{2k} + f^{2k-1}\wedge e^{2k}).
\end{array}\right.
$$

Again, the structure constants in this basis are $0,\pm 1\in \Q$, so Mal'cev's theorem [26] implies the existence of a lattice $\Gamma$ for $G$. This induces a nilmanifold $\Gamma\backslash G$ endowed with an abelian complex structure.

\vspace{1ex}

Recall that for abelian complex structures, just as for complex parallelisable ones, Dolbeault, Aeppli and Bott-Chern cohomology groups can be computed using only left-invariant forms (see [12, Remark 4] and [2, Theorem 3.8]).

\vspace{1ex}

$(Ab1_n)$: First, we consider a complex structure defined by $(Ab1_n)$. Let $A$ be the exterior algebra over the vector space $\langle \omega^1, \ldots,\ \omega^n,\overline{\omega^1},\ldots,\ \overline{\omega^n}\rangle$. It identifies naturally with the space of left-invariant $\C$-valued forms on $G$. We write $A_{1}$ for $(A,\partial,\bar\partial)$, where the exterior algebra $A$ is equipped with the differentials defined under $(Ab1_n)$ in the statement. We can also equip $A$ with a different differential $d_{P_{1}}$, acting as follows in degree $1$:

\[
d_{P_{1}}(\omega^1)=0,\ d_{P_{1}}(\omega^2)=0,\ d_{P_{1}}(\omega^3)=\omega^{2}\wedge \omega^{1},\ldots,\  d_{P_{1}}(\omega^n)=\omega^{n-1}\wedge\omega^1.
\]

One has $d_{P_{1}}^2=0$, $d_{P_{1}}=\partial_{P_{1}}+\bar\partial_{P_{1}}$, where $\partial_{P_{1}}$ and $\bar\partial_{P_{1}}$  denote the components of bidegrees $(1,0)$ and $(0,1)$, as well as $d_{P_{1}}(A^{1,0})\subseteq A^{2,0}$. So, $(A,d_{P_{1}})$ can be considered as the space of left-invariant forms on a  nilmanifold endowed with a complex parallelisable structure ${P_{1}}$. By Theorem \ref{The:nilmanifolds_c-par},  $A_{P_{1}}:=(A,\partial_{P_{1}},\bar\partial_{P_{1}})$ has the page-$1$-$\partial_{P_{1}}\bar\partial_{P_{1}}$-property. So, by Theorem \ref{The:page-r-num}, $h_{BC}(A_{P_{1}})+h_A(A_{P_{1}})=h_{\partial_{P_{1}}}(A_{P_{1}})+h_{\bar\partial_{P_{1}}}(A_{P_{1}})$.

Define a $\C$-linear involution $C:A\rightarrow A$ in degree $1$ by $C(\omega^1)=\overline{\omega^1}$ and $C(\omega^i)=\omega^i$, $C(\overline{\omega^i})=\overline{\omega^i}$ for $i>1$ and in degree $k$ by $C(\alpha^1\wedge\ldots\wedge\alpha^k):=C(\alpha^1)\wedge\ldots\wedge C(\alpha^k)$. This is compatible with the total degree, but not with the bigrading. One checks that 

$$
C\circ \partial=\bar\partial_{P_{1}}\circ C\quad\mbox{and}\quad C\circ \bar\partial=\partial_{P_{1}}\circ C.
$$
Indeed, this holds in degree $1$ and then, thanks to the Leibniz rule, in higher degrees as well. Consequently, $C$ induces isomorphisms:
\[
H_{BC}(A_1)\cong H_{BC}(A_{P_{1}}),\qquad H_{A}(A_1)\cong H_{A}(A_{P_{1}}),\qquad H_{\partial}(A_1)\cong H_{\bar\partial_{P_{1}}}(A_{P_{1}}),\qquad H_{\bar\partial}(A_1)\cong H_{\partial_{P_{1}}}(A_{P_{1}}).
\]
at the level of the total cohomologies. For example, the notation means that \[H_{BC}(A_1)=\frac{\ker\partial\cap\ker\bar\partial}{\operatorname{im}\partial\bar\partial}(A_1)=\oplus_{p,q}H_{BC}^{p,q}(A_1).\] We stress that the induced maps are not assumed to be compatible with the bigrading. The existence of such isomorphisms implies that we also have an equality $h_{BC}(A_1)+h_{A}(A_1)=h_{\partial}(A_1)+h_{\bar\partial}(A_1)$, i.e. the page-$1$-$\partial\bar\partial$-property holds for the space of left-invariant forms on $G$. Since $G$ carries an abelian complex structure, this implies that $\Gamma\backslash G$ is a page-$1$-$\partial\bar\partial$-manifold.\\

$(Ab2_n)$: Second, we consider a complex nilmanifold, of odd complex dimension $n\geq 3$, defined by $(Ab2_n)$.
We write $A_{2}$ for $(A,\partial,\bar\partial)$, where the exterior algebra $A$ is now equipped with the differentials defined under $(Ab2_n)$ in the statement.

As before, we may also equip $A$ with a different differential $d_{P_{2}}$, acting as follows in degree $1$:

\[
d_{P_{2}}(\omega^1)=0,\ldots,\ d_{P_{2}}(\omega^{n-1})=0,\ d_{P_{2}}(\omega^n)=\omega^1\wedge{\omega^2} +\omega^3\wedge{\omega^4}+\cdots+
\omega^{n-2}\wedge{\omega^{n-1}}.
\]

One has $d_{P_{2}}(A^{1,0})\subseteq A^{2,0}$, so $(A,d_{P_{2}})$ can be considered as the space of left-invariant forms on a nilmanifold endowed with a complex parallelisable structure ${P_{2}}$. Hence,  $A_{P_{2}}:=(A,\partial_{P_{2}},\bar\partial_{P_{2}})$ has the page-$1$-$\partial_{P_{2}}\bar\partial_{P_{2}}$-property, by Theorem \ref{The:nilmanifolds_c-par}, so we have $h_{BC}(A_{P_{2}})+h_A(A_{P_{2}})=h_{\partial_{P_{2}}}(A_{P_{2}})+h_{\bar\partial_{P_{2}}}(A_{P_{2}})$, by Theorem~\ref{The:page-r-num}.

Let us define a $\C$-linear involution $C:A\rightarrow A$ in degree $1$ by 
$$
C(\omega^{2i+1})=\omega^{2i+1}, \quad 
C(\overline{\omega^{2i+1}})=\overline{\omega^{2i+1}}, \quad  C(\omega^{2i})=\overline{\omega^{2i}},\ \ \mbox{ for } 0\leq i\leq \frac{n-1}{2},
$$
together with $C(\omega^{2n})=\omega^{2n}$ and $C(\overline{\omega^{2n}})=\overline{\omega^{2n}}$. We extend $C$ to  degree $k$ by $C(\alpha^1\wedge\ldots\wedge\alpha^k):=C(\alpha^1)\wedge\ldots\wedge C(\alpha^k)$. One checks that $C\circ \partial=\bar\partial_{P_{2}}\circ C$ and $C\circ \bar\partial=\partial_{P_{2}}\circ C$, so we can conclude as before.

\end{proof}

\begin{Rem}\label{Dolb-A1-A2}

Note that $C\circ d=d_{P_{1}}\circ C$ (similarly for $P_{2}$), and $C$ is compatible with the real structure. So it induces an isomorphism of the underlying real Lie groups. However, the corresponding complex nilmanifolds are not biholomorhic. Indeed, the Hodge number of bidegree $(1,0)$ is given by 

$$
h^{1,0}_{\bar\partial}=2 \ \mbox{ for }\ (Ab1_n),\quad  \mbox{ and }\quad   h^{1,0}_{\bar\partial}=n-1 \ \mbox{ for }\ (Ab2_n),
$$
whereas $h^{1,0}_{\bar\partial_{P}}=n$ for any complex parallelisable nilmanifold of complex dimension $n$.

\end{Rem}

Note that the abelian complex structures defined by $(Ab1_n)$ and $(Ab2_n)$ coincide precisely when $n=3$. We denote this common complex structure on $G$ by $\tilde{J}$ and we write $\tilde{X}=(\nilm,\tilde{J})$ for any nilmanifold endowed with the induced complex structure, still denoted by $\tilde{J}$, where $\Gamma\subset G$ is a lattice.

In the following proposition we prove that, in complex dimension 3, the only complex nilmanifolds which are page-$(r-1)$-$\partial\bar\partial$ for some $r\in\N^\star$ are, apart from a torus, the Iwasawa manifold $I^{(3)}$ and the nilmanifolds $\tilde{X}$.

Notice that these results generalise those in $\S.$\ref{subsection:Iwasawa-def}.

\begin{Prop}\label{Prop:clasif-nilmanifolds} Let $X=(\nilm,J)$ be a complex nilmanifold of complex dimension 3, different from a torus, endowed with an {\it invariant} complex structure $J$.

If there exists $r\in\N^\star$ such that $X$ is a page-$(r-1)$-$\partial\bar\partial$-manifold, then $J$ is equivalent to the complex parallelisable structure of $I^{(3)}$ or to the abelian complex structure $\tilde{J}$ defined by $(Ab1_n)$ in Theorem \ref{AbelianNilmanifolds} for $n=3$. In both cases $r=2$, i.e. both of these manifolds are page-$1$-$\partial\bar\partial$-manifolds.

\end{Prop}

\begin{proof}

We already know by Theorems~\ref{The:nilmanifolds_c-par} and~\ref{AbelianNilmanifolds} that $I^{(3)}$ and $\tilde{X}$ are page-$1$-$\partial\bar\partial$-manifolds.

On the other hand, it is proved in [23] that for any other invariant complex structure $J$ (i.e. not equivalent to $\tilde{J}$ or to the complex parallelisable structure of $I^{(3)}$), the nilmanifold $X=(\nilm,J)$ fails to be pure in degree 4 or 5, that is, the direct sum decomposition of Definition~\ref{Def:purity} is not satisfied for $k=4$ or $k=5$ (or both). So, such complex nilmanifolds $X=(\nilm,J)$ are not page-$(r-1)$-$\partial\bar\partial$-manifolds for any $r\in\N^\star$.\end{proof}

\begin{Rem}\label{sGG-A1-A2}

According to [31] and [34], a compact complex manifold $X$ is called an {\bf sGG manifold} if every Gauduchon metric $\omega$ on $X$ is sG, i.e.  $\partial\omega^{n-1}$ is $\delbar$-exact. 

By the numerical characterisation proved in [34, Theorem 1.6], a compact complex manifold is sGG if and only if $b_1=2h^{0,1}_{\delbar}$. For instance, the Iwasawa manifold is sGG (see [34]), and more generally any complex parallelisable nilmanifold is sGG, due to \eqref{rels}.

For the nilmanifolds endowed with the abelian complex structures defined in Theorem \ref{AbelianNilmanifolds}, we have the following Betti and Hodge numbers:  
$$
b_1=4\not= 2n=2\,h^{0,1}_{\bar\partial} \ \mbox{ for }\ (Ab1_n),\quad  \mbox{ and }\quad   b_1=2(n-1)\not= 2n=2\,h^{0,1}_{\bar\partial} \ \mbox{ for }\ (Ab2_n).
$$
Hence, such complex nilmanifolds are not sGG-manifolds.

On the other hand, all the sGG nilmanifolds of complex dimension $n=3$ are identified in [34, Theorem 6.1]. In particular, there exist complex nilmanifolds different from the Iwasawa manifold and $\tilde{X}$ which are sGG, so by Proposition~\ref{Prop:clasif-nilmanifolds}, they are not page-$(r-1)$-$\partial\bar\partial$-manifolds for any $r\in\N^\star$.

Therefore, the page-$1$-$\partial\bar\partial$ and the sGG properties of compact complex manifolds are unrelated.

\end{Rem}

\subsection{Nakamura solvmanifolds}

Consider $G:=\C\ltimes_\phi\C^2$, where $\phi$ is either

\[
\phi(z)=\begin{pmatrix}
e^z&0\\
0&e^{-z}
\end{pmatrix}\qquad \text{ or }\qquad \phi(z)=\begin{pmatrix}
e^{\mbox{\tiny Re}(z)}&0\\
0&e^{-\mbox{\tiny Re}(z)}
\end{pmatrix}
\] 

\noindent ({\it complex parallelizable}, resp. {\it completely solvable} case). Define $X$ to be the quotient of $G$ by a lattice of the form $\Gamma\ltimes_\phi\Gamma'$ with $\Gamma\subset \C$, $\Gamma'\subset\C^2$ lattices. These manifolds were studied in [29] and are called {\it Nakamura manifolds}. They are among the best known {\it solvmanifolds}, but are {\it not nilmanifolds}. In [4], Angella and Kasuya computed the Hodge, Bott-Chern and Aeppli numbers depending on the lattice $\Gamma$. (These numbers turn out to be independent of $\Gamma'$). In particular, their calculations yield the equality $h_{BC}(X)=h_{\delbar}(X)$. Hence, by Theorem \ref{The:page-1-ddbar_numerical-char}, we obtain:

\begin{Cor}\label{Cor:A-K_Nakamura}

	The complex parallelisable and completely solvable {\bf Nakamura manifolds} considered in [4] are {\bf page-$1$-$\del\delbar$-manifolds}.

\end{Cor}

\section{Construction methods for page-r-$\del\delbar$-manifolds}\label{section:geom-prop_page-r-ddbar}

Among the issues that we take up in this section, there is the behaviour of page-$r$-$\partial\bar\partial$-manifolds under modifications and its link with the open problem of submanifold heredity of this class of manifolds.

\begin{The}\label{The:geom-prop_page-r-ddbar}Let $X$ and $Y$ be compact complex manifolds.

\begin{enumerate}

\item[(1)] If $X$ is a page-$r$-$\del\delbar$-manifold and $Y$ is a page-$r'$-$\del\delbar$-manifold, the product $X\times Y$ is a page-$\tilde{r}$-$\del\delbar$-manifold, where $\tilde{r}=\max\{r,r'\}$. 

Conversely, if the product is a page-$r$-$\del\delbar$-manifold, so are both factors.

\item[(2)] For any vector bundle $\mathcal{V}$ over $X$, the projectivised bundle $\mathbb{P}(\mathcal{V})$ is a page-$r$-$\del\delbar$-manifold if and only if $X$ is.

\item[(3)] Suppose $X$ is a page-$r$-$\del\delbar$-manifold. Let $f:X\longrightarrow Y$ be a surjective holomorphic map and assume there exists a $d$-closed $(l,l)$-current $\Omega$ on $X$ (with $l=\dim X-\dim Y)$ such that $f_*\Omega\neq 0$. Then $Y$ is also a page-$r$-$\del\delbar$-manifold. 
In particular, this implication always holds when $\dim X=\dim Y$, e.g. for contractions (take $\Omega$ to be a constant).
\item[(4)] Given a submanifold $Z\subset X$, denote by $\widetilde{X}$ the blow-up of $X$ along $Z$. If $X$ is page-$r$-$\del\delbar$ and $Z$ is page-$r'$-$\del\delbar$, then $\widetilde{X}$ is a page-$\widetilde{r}$-$\del\delbar$-manifold, where $\tilde{r}=\max\{r,r'\}$. 
Conversely, if $\widetilde{X}$ is page $r$-$\del\delbar$, so are $X$ and $Z$.
\item[(5)] The page-$r$-$\del\delbar$-property of compact complex manifolds is a bimeromorphic invariant if and only if it is stable under passage to submanifolds.

\end{enumerate}

\end{The}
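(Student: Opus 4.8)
The plan is to prove both implications. The forward direction, ``if the page-$r$-$\del\delbar$-property is a bimeromorphic invariant, then it is stable under passage to submanifolds,'' is the one that needs an idea, while the reverse direction is essentially a repackaging of part (4). Let me sketch both.

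For the reverse direction, suppose the page-$r$-$\del\delbar$-property is stable under passage to submanifolds, and let $f\colon X\dashrightarrow Y$ be a bimeromorphic map between compact complex manifolds with $X$ page-$r$-$\del\delbar$. By Hironaka's resolution of the indeterminacies of $f$ (via a sequence of blow-ups along smooth centres), there is a compact complex manifold $\widetilde{X}$ obtained from $X$ by a finite sequence of blow-ups with smooth centres, together with a surjective holomorphic map $\widetilde{X}\to Y$ which is itself a composition of blow-downs (so in particular a modification). First, each centre being a submanifold of the manifold blown up so far is page-$r$-$\del\delbar$ by the stability hypothesis (applied inductively, using that blow-ups of page-$r$-$\del\delbar$-manifolds along page-$r$-$\del\delbar$-submanifolds are page-$r$-$\del\delbar$ by part (4)); hence $\widetilde{X}$ is page-$r$-$\del\delbar$. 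Then $\widetilde{X}\to Y$ is a surjective holomorphic map of equal-dimensional manifolds, so part (3) (or the ``conversely'' clause of part (4), applied repeatedly) gives that $Y$ is page-$r$-$\del\delbar$. Thus the property is a bimeromorphic invariant.

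For the forward direction, assume the property is a bimeromorphic invariant, and let $Z\subset X$ be a submanifold of a page-$r$-$\del\delbar$-manifold $X$; we must show $Z$ is page-$r$-$\del\delbar$. The key observation is that $Z$ appears as the exceptional divisor's base inside the blow-up $\widetilde{X}$ of $X$ along $Z$: by part (4), $\widetilde{X}$ is bimeromorphic to $X$ (the blow-down map is a modification), and since $X$ is page-$r$-$\del\delbar$, the bimeromorphic invariance hypothesis forces $\widetilde{X}$ to be page-$r$-$\del\delbar$ as well. Now the exceptional divisor $E=\Proj(N_{Z/X})\subset\widetilde{X}$ is a submanifold, and it is the projectivisation of the normal bundle $N_{Z/X}$ over $Z$. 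Applying the ``conversely'' clause of part (4) to the submanifold $E\subset\widetilde{X}$ (whose blow-up makes sense if $\mathrm{codim}_{\widetilde X}E\geq 2$; when $E$ is a divisor one must argue differently) would give that $E$ is page-$r$-$\del\delbar$, and then part (2), applied to the projective bundle $E=\Proj(N_{Z/X})\to Z$, yields that $Z$ is page-$r$-$\del\delbar$. The one subtlety is that $E$ is a hypersurface in $\widetilde X$, so the ``conversely'' clause of (4) does not literally apply; instead one should use that $\widetilde X$ is a page-$r$-$\del\delbar$-manifold and $E\subset\widetilde X$ is a submanifold cut out nicely, invoking part (1) of Theorem \ref{The:blowup-stability_introd} in the form ``if $\widetilde X$ is page-$r$-$\del\delbar$, so are $X$ and $Z$'' after identifying $E$ with the relevant centre. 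Concretely: apply the ``conversely'' half of part (4) directly to the pair $(X,Z)$ — but that is circular. The clean route is instead: blow up $\widetilde X$ along $E$ is not needed; rather, observe $\widetilde{X}$ is \emph{itself} the blow-up of $X$ along $Z$, so the ``conversely'' half of part (4) applied to this very blow-up gives at once that both $X$ \emph{and} $Z$ are page-$r$-$\del\delbar$. Hence $Z$ is page-$r$-$\del\delbar$, as required.

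The main obstacle I anticipate is precisely making the forward direction non-circular: one wants to conclude ``$Z$ page-$r$-$\del\delbar$'' from ``$\widetilde X$ page-$r$-$\del\delbar$,'' and part (4)'s converse already delivers this for free, provided one is entitled to know $\widetilde X$ is page-$r$-$\del\delbar$. That entitlement is exactly what bimeromorphic invariance supplies, since $\widetilde X\to X$ is a modification and $X$ is page-$r$-$\del\delbar$ by hypothesis. So the logical skeleton is: bimeromorphic invariance $\Rightarrow$ ($\widetilde X\simeq_{\mathrm{bimero}} X$ page-$r$-$\del\delbar$ $\Rightarrow$ $\widetilde X$ page-$r$-$\del\delbar$) $\Rightarrow$ (converse of (4)) $Z$ page-$r$-$\del\delbar$. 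Conversely, stability under submanifolds plus the forward half of (4) plus resolution of bimeromorphic maps into blow-ups/blow-downs gives invariance. I would also remark that the equivalence is stated for the class of \emph{all} compact complex manifolds, so no codimension hypotheses are lost: blow-ups along submanifolds of codimension $\geq 2$ suffice to resolve any bimeromorphic map, and codimension-one ``blow-ups'' are isomorphisms, hence harmless.
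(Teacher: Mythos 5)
Your ``reverse'' direction (stability under submanifolds $\Rightarrow$ bimeromorphic invariance) is essentially the paper's argument: factor the bimeromorphic map into smooth blow-ups, propagate the property through each blow-up via the forward half of part (4) (the centres being page-$r$-$\del\delbar$ by the stability hypothesis), and descend to $Y$; your use of resolution of indeterminacies plus part (3) in place of the full weak factorization theorem of [AKMW02] is a legitimate variant (the parenthetical claim that $\widetilde X\to Y$ is a composition of blow-downs is not what Hironaka gives you, but you only use that it is a surjective holomorphic map between equal-dimensional manifolds, which is all part (3) needs).

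The ``forward'' direction, however, has a genuine gap in the codimension-one case, and your closing remark that ``codimension-one blow-ups are isomorphisms, hence harmless'' misdiagnoses the problem: for this direction the isomorphism is precisely what kills the argument. If $Z\subset X$ is a hypersurface, then $\widetilde X=\mathrm{Bl}_Z X\cong X$, and the converse clause of part (4) rests on the $E_1$-isomorphism $A_{\widetilde{X}}\simeq_1 A_X\oplus \bigoplus_{i=1}^{\operatorname{codim}Z-1} A_Z[i]$, whose $A_Z$-summand is empty when $\operatorname{codim}Z=1$; so knowing $\widetilde X$ is page-$r$-$\del\delbar$ tells you nothing whatsoever about $Z$. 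You even notice this obstruction in your first (exceptional-divisor) attempt, but your ``clean route'' then applies the converse of (4) to the pair $(X,Z)$ without checking that $\operatorname{codim}_X Z\geq 2$. The paper closes exactly this hole with a trick you are missing: when $\operatorname{codim}_X Z=1$, replace $X$ by $X\times\Proj^1_\C$ (still page-$r$-$\del\delbar$ by part (1)) and $Z$ by $Z\times\{0\}$, which now has codimension $2$; bimeromorphic invariance makes the blow-up of $X\times\Proj^1_\C$ along $Z\times\{0\}$ page-$r$-$\del\delbar$, and the converse of (4) then legitimately yields that $Z\cong Z\times\{0\}$ is page-$r$-$\del\delbar$. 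With that supplement your proof is complete.
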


\noindent {\it Proof.} The proofs are very similar to those in [38, Cor. 28]. We will be using the characterisation of the page-$r$-$\del\delbar$-property in terms of occuring zigzags (Theorem \ref{The: Characterisation zigzags}) and $E_1$-isomorphisms (Def. \ref{Def: E_r-iso}), in particular Lemma \ref{Lem: E_1 isos and zigzags}.

Write $A_X$ as shorthand for the double complex $(C^\infty_{\Cdot,\Cdot}(X,\C),\del,\delbar)$ and $A_X[i]$ for the shifted double complex with bigrading $(A_X[i])^{p,q}:=A_X^{p-i,q-i}$. By [38, Sect.~4], [37] and [27], we have the following {\bf $E_1$-isomorphisms}:\footnote{Cf. also [35], [43], [28] and [6] for different approaches to the blow-up question in the setting of particular cohomologies.}

\begin{align}
A_{X\times Y}&\simeq_1 A_X\otimes A_Y,\\
A_{\Proj (\Vc)}&\simeq_1 \bigoplus_{i=0}^{\rk \Vc-1} A_{X}[i],\\
A_X&\simeq_1 A_Y\oplus A_X/f^*A_Y,\\
A_{\widetilde{X}}&\simeq_1 A_X\oplus \bigoplus_{i=1}^{\operatorname{codim}Z-1} A_Z[i].\label{eqn: blowup}
\end{align}

Since the occuring zigzags get only shifted, $A_X[i]$ satisfies the page-$r$-$\del\delbar$-property if and only if $A_X$ does. Furthermore, a direct sum of complexes satisfies the page-$r$-$\del\delbar$-property if and only if each summand does. So, the second, third and fourth $E_1$-isomorphisms imply $(2)$, $(3)$ and $(4)$

For the first part of $(1)$, we use the first isomorphism and the fact that one knows how irreducible subcomplexes behave under tensor product (see [38, Prop. 16]). In particular, even-length zigzags do not get longer and the product of two length-one zigzags is again of length one. For the converse, note that $A_X$ and $A_Y$ are direct summands in their tensor product, so we can argue as before.

The `if' statement in the last part of $(5)$ is a direct consequence of $(4)$ and the weak factorisation theorem [1], which says that every bimeromorphic map can be factored as a sequence of blow-ups and blow-downs with smooth centres. The `only if' part also follows from $(4)$ (cf. also [27]). Indeed, let $X$ be page-$r$-$\del\delbar$ and let $Z\subset X$ be a submanifold. If $Z$ has codimension one, we replace $X$ by $X\times \Proj^1_\C$ (which is still page-$r$-$\del\delbar$ by $(1)$) and $Z$ by $Z\times\{0\}$. By assumption, the blow-up is still page-$r$-$\del\delbar$ and one can apply $(4)$ to infer that the same holds for $Z$.\hfill$\Box$\\

Since for surfaces and threefolds, the centre of a nontrivial blow-up is a point or a curve, we get

\begin{Cor} Fix any $r\in\N$. The page-$r$-$\del\delbar$-property of compact complex surfaces and threefolds is a bimeromorphic invariant.

\end{Cor}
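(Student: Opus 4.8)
The plan is to deduce the corollary directly from part (5) of Theorem \ref{The:geom-prop_page-r-ddbar}, which reduces bimeromorphic invariance of the page-$r$-$\del\delbar$-property to its stability under passage to submanifolds. So the only thing that really needs arguing is that, for compact complex surfaces and threefolds, every submanifold that can occur as the centre of a (nontrivial) blow-up — equivalently, every submanifold of codimension $\geq 2$ — is automatically a page-$r$-$\del\delbar$-manifold, in fact for every $r\in\N$. First I would observe that in a compact complex surface ($\dim=2$) a submanifold of codimension $\geq 2$ is $0$-dimensional, i.e. a finite set of points; in a compact complex threefold ($\dim=3$) a submanifold of codimension $\geq 2$ is either a finite set of points or a compact complex curve (a disjoint union of compact Riemann surfaces). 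A point is trivially a page-$0$-$\del\delbar$-manifold (its de Rham cohomology is $\C$ in degree $0$, so purity and $E_1=E_\infty$ are immediate), and a compact Riemann surface is K\"ahler, hence a $\partial\bar\partial$-manifold, hence a page-$0$-$\del\delbar$-manifold by Proposition \ref{Prop:dd-bar_page_0_equiv}, and thus a page-$r$-$\del\delbar$-manifold for all $r\geq 0$ by Corollary \ref{Cor:increasing-r_page-r-ddbar}.

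With that in hand, the key steps are: (i) invoke the weak factorization theorem (as used in the proof of part (5) of Theorem \ref{The:geom-prop_page-r-ddbar}) to write any bimeromorphic map between compact complex surfaces, resp. threefolds, as a finite composition of blow-ups and blow-downs along smooth centres, noting that such a centre is necessarily a point or (in the threefold case) a smooth curve; (ii) apply part (4) of Theorem \ref{The:geom-prop_page-r-ddbar}: if $X$ is page-$r$-$\del\delbar$ and the centre $Z$ is page-$r'$-$\del\delbar$ with $r'=0\leq r$, then the blow-up $\widetilde X$ is page-$\max\{r,0\}=r$-$\del\delbar$; (iii) apply the converse half of part (4) to handle blow-downs, i.e. if $\widetilde X$ is page-$r$-$\del\delbar$ then so is $X$; (iv) chain these implications along the factorization. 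Since each elementary step in the factorization preserves the page-$r$-$\del\delbar$-property in both directions, so does the whole bimeromorphic map.

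Alternatively, and even more economically, I would simply note that the hypothesis of part (5) of Theorem \ref{The:geom-prop_page-r-ddbar} — stability of the page-$r$-$\del\delbar$-property under passage to submanifolds — is vacuously verified in the surface and threefold settings once one restricts, as the weak factorization theorem allows, to the submanifolds that actually arise as blow-up centres, because those are only points and curves, all of which are page-$0$-$\del\delbar$. This is exactly the argument sketched in the text immediately before the corollary ("Since for surfaces and threefolds, the centre of a nontrivial blow-up is a point or a curve"), so the proof is essentially a one-line appeal to that remark together with part (5) of the theorem.

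I do not expect any genuine obstacle here: the corollary is a formal consequence of machinery already fully developed in the excerpt. The only point requiring a modicum of care is the dimension bookkeeping — checking that in dimension $\leq 3$ a proper smooth centre of a blow-up is indeed of dimension $0$ or $1$, and that in those dimensions the relevant cohomological purity and Fr\"olicher degeneration are classical — but this is routine. Thus the write-up is short: cite Theorem \ref{The:geom-prop_page-r-ddbar}(4)--(5), Proposition \ref{Prop:dd-bar_page_0_equiv}, Corollary \ref{Cor:increasing-r_page-r-ddbar}, and the weak factorization theorem, and conclude.
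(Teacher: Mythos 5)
Your proposal is correct and follows essentially the same route as the paper: the paper's entire proof is the remark preceding the corollary that blow-up centres in dimensions $2$ and $3$ are points or curves, combined with the weak factorisation theorem and part (4) of Theorem \ref{The:geom-prop_page-r-ddbar}, exactly as in your steps (i)--(iv). The only minor quibble is that your opening appeal to part (5) is not quite the right entry point (its hypothesis concerns stability under passage to arbitrary submanifolds), but your main argument bypasses this correctly by going through part (4) directly.
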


In the remainder of the paper, because we can handle it with very similar methods, we point out a result about a class of manifolds that contains the class of page-$(r-1)$-$\partial\bar\partial$-manifolds. Given a compact complex $n$-dimensional manifold $X$, recall the following facts and definitions:

\begin{enumerate}
	\item[(1)] For a fixed integer $r\geq 2$ and a bidegree $(p,\,q)$, a $C^\infty$ form $\alpha$ of bidegree $(p,\,q)$ on $X$ is {\bf $E_r$-closed} (in the sense that it represents a cohomology class $\{\alpha\}_{E_r}\in E_r^{p,\,q}(X)$ on the $r$-th page of the Fr\"olicher spectral sequence of $X$) if and only if there exist forms $u_l\in C^\infty_{p+l,\,q-l}(X,\,\C)$ with $l\in\{1,\dots , r-1\}$ such that \begin{equation}\label{eqn:E_r-closedness_end}\bar\partial\alpha=0, \hspace{2ex} \partial\alpha = \bar\partial u_1, \hspace{2ex} \partial u_1 = \bar\partial u_2,\dots , \hspace{2ex}\partial u_{r-2} = \bar\partial u_{r-1}.\end{equation} (See [13], taken up again in [32, Proposition 2.7].)
\item[(2)] For a fixed integer $r\geq 1$, an {\bf $E_r$-sG metric} on $X$ is a Hermitian metric (i.e. a $C^\infty$ positive definite $(1,\,1)$-form) $\omega$ such that $\partial\omega^{n-1}$ is $E_r$-exact. (See [32, Definition 3.1, (i)].)

\item[(3)] For a fixed integer $r\geq 1$, $X$ is said to be an {\bf $E_r$-sG manifold} if an $E_r$-sG metric exists on $X$. (See [32, Definition 3.1, (ii)].)

\item[(4)] For a fixed integer $r\geq 1$, $X$ is said to be an {\bf $E_r$-sGG manifold} if every Gauduchon metric on $X$ is an $E_r$-sG metric. (See [32, Definition 3.1, (iii)].)
\item[(5)] For a fixed integer $r\geq 1$, every {\bf page-$(r-1)$-$\del\delbar$-manifold} is an {\bf $E_r$-sGG manifold}. (See [33, Proposition 5.2].)
\end{enumerate}

Let us now fix an integer $r\geq 1$ and consider the canonical linear map on a compact complex $n$-dimensional manifold $X$: \begin{equation*}T_r:H_A^{n-1,\, n-1}(X,\,\C)\rightarrow E_r^{n,\, n-1}(X), \hspace{5ex} [\alpha]_A\mapsto\{\del\alpha\}_{E_r}.\end{equation*} Note that this map is well defined since:

\vspace{1ex}

Whenever $\alpha$ represents an Aeppli class, we have $\alpha\in\ker(\partial\bar\partial)$, so $\bar\partial(\partial\alpha)=0$ and $\partial(\partial\alpha)=0$. Hence, $\partial\alpha$ satisfies the $E_r$-closedness conditions (\ref{eqn:E_r-closedness_end}) with $u_1=\dots = u_{r-1}=0$. Thus, the class $\{\del\alpha\}_{E_r}$ is well defined.

\vspace{1ex}

If two $(n-1,\,n-1)$-forms $\alpha,\beta\in\ker(\partial\bar\partial)$ represent the same Aeppli class, there exist forms $u\in C^\infty_{n-2,\,n-1}(X,\,\C)$ and $v\in C^\infty_{n-1,\,n-2}(X,\,\C)$ such that $\alpha-\beta=\partial u + \bar\partial v$. Hence, $\partial\alpha-\partial\beta=\partial\bar\partial v$. In particular, $\partial\alpha-\partial\beta\in\mbox{im}\,\bar\partial$. In the language of [32, Definition 3.1], this means that $\partial\alpha-\partial\beta$ is $E_1$-exact, which implies that it is $E_r$-exact (i.e. it represents the {\it zero} class on the $r$-th page of the Fr\"olicher spectral sequence of $X$) for every $r\geq 1$. Thus, $\{\partial\alpha\}_{E_r} = \{\partial\beta\}_{E_r}$. This proves that the map $T_r$ is independent of the choice of representative of the class $[\alpha]_A\in H^{n-1,\,n-1}(X,\,\C)$.

\begin{Lem} An $n$-dimensional compact complex manifold $X$ is $E_r$-sGG if and only if $T_r=0$.

\end{Lem}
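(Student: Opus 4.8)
The plan is to reduce the statement to a direct rephrasing of the definitions, using the openness and non-emptiness of the Gauduchon cone. Recall that by definition $X$ is $E_r$-sGG if every Gauduchon metric $\omega$ on $X$ is $E_r$-sG, i.e., $\partial\omega^{n-1}$ is $E_r$-exact, which (by the remark just before the Lemma, itself an immediate consequence of the fact that $\partial\omega^{n-1}$ is always $E_r$-closed and its Aeppli class is sent to its $E_r^{n,\,n-1}$-class by $T_r$) is equivalent to $[\omega^{n-1}]_A\in\ker T_r$.

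\textbf{Key steps.} First, the implication ``$T_r=0\Rightarrow X$ is $E_r$-sGG'' is trivial: if $T_r$ is the zero map, then $[\omega^{n-1}]_A\in\ker T_r$ for \emph{every} Gauduchon metric $\omega$, so every Gauduchon metric is $E_r$-sG. Second, for the converse, suppose $X$ is $E_r$-sGG. Then $[\omega^{n-1}]_A\in\ker T_r$ for every Gauduchon metric $\omega$. The point is that the classes $[\omega^{n-1}]_A\in H_A^{n-1,\,n-1}(X)$, as $\omega$ ranges over Gauduchon metrics, span all of $H_A^{n-1,\,n-1}(X)$: indeed, the set of such classes is (the image of) the Gauduchon cone, which is an open non-empty convex cone in $H_A^{n-1,\,n-1}(X)$ (this is the observation in [Pop15]; alternatively, a Gauduchon metric exists by [Gau77], and one perturbs it by adding small $\partial\bar\partial$-exact and more general corrections — or simply notes that an open set in a finite-dimensional real vector space spans it). Since $T_r$ is linear and vanishes on a spanning set, $T_r=0$.

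\textbf{Main obstacle.} The only non-formal input is the assertion that the Gauduchon classes $[\omega^{n-1}]_A$ span $H_A^{n-1,\,n-1}(X)$, equivalently that the Gauduchon cone has non-empty interior in $H_A^{n-1,\,n-1}(X)$. This is precisely what is recorded in the sentence preceding the Lemma (``the Gauduchon cone is open and non-empty''), so in the write-up I would simply cite that fact, attributing the spanning/openness statement to [Gau77] and [Pop15]. Everything else is a one-line unwinding of definitions, so the proof is genuinely short:

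\begin{proof}
By the discussion preceding the statement, a Gauduchon metric $\omega$ on $X$ is $E_r$-sG if and only if $[\omega^{n-1}]_A\in\ker T_r$. Hence $X$ is $E_r$-sGG if and only if $[\omega^{n-1}]_A\in\ker T_r$ for every Gauduchon metric $\omega$. If $T_r=0$ this is automatic, so $X$ is $E_r$-sGG. Conversely, assume $X$ is $E_r$-sGG. Since the Gauduchon cone is a non-empty open subset of the finite-dimensional vector space $H_A^{n-1,\,n-1}(X)$ (cf. [Gau77], [Pop15]), the classes $[\omega^{n-1}]_A$, with $\omega$ ranging over all Gauduchon metrics, span $H_A^{n-1,\,n-1}(X)$. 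As $T_r$ is linear and, by the $E_r$-sGG assumption, vanishes on all these classes, it vanishes identically: $T_r=0$.
\end{proof}
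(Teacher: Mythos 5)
Your proof is correct and is exactly the argument the paper intends: the sentence preceding the Lemma records that $\omega$ is $E_r$-sG iff $[\omega^{n-1}]_A\in\ker T_r$ and that the Gauduchon cone is open and non-empty, and the Lemma is then the observation that a linear map vanishing on a non-empty open (hence spanning) subset of the finite-dimensional space $H_A^{n-1,\,n-1}(X)$ vanishes identically. Your write-up just makes explicit what the paper leaves as an "easy generalisation of an observation in [Pop15]".
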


\noindent {\it Proof.} The argument is the analogue in this context of the proof of Observation 5.3. in [31].

Let $\omega$ be a Gauduchon metric on $X$. This means that $\omega$ is a Hermitian metric such that $\partial\bar\partial\omega^{n-1}=0$ (cf. [18]). We see that $\omega$ is $E_r$-sG if and only if $[\omega^{n-1}]_A\in \ker T_r$. Thus, the set of all classes $[\omega^{n-1}]_A\in H_A^{n-1,\,n-1}(X,\,\C)$ that are representable by the $(n-1)$-st power $\omega^{n-1}$ of an $E_r$-sG metric $\omega$ is precisely the intersection $${\cal G}_X\cap\ker T_r,$$ where ${\cal G}_X\subset H^{n-1,\,n-1}_A(X,\,\R)$ is the Gauduchon cone of $X$ (defined in [31, Definition 5.1] as the set of all classes $[\omega^{n-1}]_A\in H^{n-1,\,n-1}_A(X,\,\R)$ that are representable by the $(n-1)$-st power $\omega^{n-1}$ of a Gauduchon metric $\omega$).

Since the Gauduchon cone is open in $H^{n-1,\,n-1}_A(X,\,\R)$ and non-empty, the equality $${\cal G}_X\cap\ker T_r = {\cal G}_X$$ (which holds if and only if $X$ is an $E_r$-sGG manifold) is equivalent to $\ker T_r = H_A^{n-1,\,n-1}(X,\,\C)$, so to the map $T_r$ vanishing identically. \hfill $\Box$

\vspace{2ex}

As a consequence of this, we get the {\bf bimeromorphic invariance} of the {\bf $E_r$-sGG} property.

\begin{Cor} Let $X$ and $\widetilde{X}$ be {\bf bimeromorphically equivalent} compact complex manifolds. Then, every Gauduchon metric on $X$ is $E_r$-sG if and only if this is true on $\widetilde{X}$.

\end{Cor}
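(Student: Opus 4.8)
The statement to prove is the Corollary asserting that the $E_r$-sGG property is a bimeromorphic invariant. The plan is to reduce it to the bimeromorphic invariance of the vanishing of the natural map $T_r : H_A^{n-1,\,n-1}(X) \to E_r^{n,\,n-1}(X)$, using the Lemma immediately preceding it together with the weak factorization theorem.

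First I would recall, following the Lemma, that a compact complex manifold $Y$ is $E_r$-sGG if and only if the linear map $T_r : H_A^{n-1,\,n-1}(Y) \to E_r^{n,\,n-1}(Y)$, sending $[\alpha]_A \mapsto [\partial\alpha]_{E_r}$, is identically zero. So it suffices to show that $T_r = 0$ on $X$ if and only if $T_r = 0$ on $\widetilde{X}$ for bimeromorphically equivalent $X$ and $\widetilde{X}$. By the weak factorization theorem [AKMW02], any bimeromorphic map between compact complex manifolds factors as a finite composition of blow-ups and blow-downs with smooth centres, so it is enough to treat the case where $\widetilde{X} \to X$ is the blow-up of $X$ along a submanifold $Z$ of codimension $\geq 2$.

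The key step is to invoke the $E_1$-isomorphism $A_{\widetilde{X}} \simeq_1 A_X \oplus \bigoplus_{i=1}^{\operatorname{codim} Z - 1} A_Z[i]$ from \eqref{eqn: blowup}, together with the functoriality packaged in Lemma \ref{Lem: E_r isos in cohomology}: the functors $H_A^{p,q}$ and $E_r^{p,q}$ kill squares and even-length zigzags of length $\leq 2r$, hence send $E_r$-isomorphisms — in particular $E_1$-isomorphisms — to isomorphisms. Thus the map $T_r$ is compatible with this direct sum decomposition, and on the shifted summands $A_Z[i]$ the relevant bidegree $(n-1,\,n-1)$ (respectively $(n,\,n-1)$) corresponds, after the shift by $i \geq 1$, to a bidegree $(n-1-i,\, n-1-i)$ on $Z$, which is strictly below the top holomorphic degree $\dim Z = n - \operatorname{codim} Z < n - i$; hence $\partial$ followed by passage to $E_r^{\bullet,\bullet}(Z)$ lands in a degree still within range and the corresponding restriction of $T_r$ on each shifted copy of $A_Z$ is the map $H_A^{n-1-i,\,n-1-i}(Z) \to E_r^{n-i,\,n-1-i}(Z)$; for these to all vanish is exactly the condition that $Z$ be (a suitable lower-degree analogue of) $E_r$-sGG, but since $\dim Z < n$ one checks directly that these maps are automatically zero because $E_r^{n-i,\,n-1-i}(Z) = 0$ when $n-i > \dim Z$, and otherwise they are subsumed in the $E_r$-sGG property of $Z$ which need not hold. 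Here I should be more careful: the honest statement is simply that $T_r$ on $\widetilde X$ is the direct sum of $T_r$ on $X$ with maps depending only on the (shifted) double complex $A_Z$; therefore $T_r = 0$ on $\widetilde{X}$ forces $T_r = 0$ on $X$, and conversely, since the blow-up of $X$ along $Z$ in the product with $\mathbb{P}^1$ trick of Theorem \ref{The:geom-prop_page-r-ddbar}(5) is not needed here — one only needs the implication $T_r(\widetilde X)=0 \Rightarrow T_r(X)=0$ and $T_r(X)=0 \Rightarrow T_r(\widetilde X)=0$, both of which follow because $A_X$ is a direct summand of $A_{\widetilde X}$ up to $E_1$-isomorphism.

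Concretely, the clean argument is: since $A_X$ is (up to $E_1$-isomorphism) a direct summand of $A_{\widetilde X}$, the map $T_r(X)$ is a direct summand of $T_r(\widetilde X)$, so $T_r(\widetilde X) = 0 \Rightarrow T_r(X) = 0$. For the reverse, suppose $T_r(X) = 0$; then applying the direct sum decomposition, $T_r(\widetilde X)$ is the direct sum of $T_r(X) = 0$ and the maps coming from the shifted copies $A_Z[i]$ with $1 \leq i \leq \operatorname{codim} Z - 1$; but in bidegree $(n-1,\,n-1)$ the summand $A_Z[i]$ contributes $H_A^{n-1-i,\,n-1-i}(Z)$, and since $\operatorname{codim} Z \geq 2$ and $\dim Z = n - \operatorname{codim} Z$, one has $n - i \geq n - \operatorname{codim} Z + 1 = \dim Z + 1 > \dim Z$, so $E_r^{n-i,\,n-1-i}(Z) = 0$, whence these contributions to $T_r(\widetilde X)$ vanish trivially. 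Therefore $T_r(\widetilde X) = 0$. The main obstacle is bookkeeping the bidegree shifts correctly and making sure the contributions from the exceptional divisor land in a range where $E_r^{\bullet,\bullet}(Z)$ vanishes for dimension reasons; once that is checked, the equivalence $T_r(X) = 0 \iff T_r(\widetilde X) = 0$ and hence the corollary follow immediately from the Lemma and weak factorization.

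\begin{proof}
By the Lemma just proved, a compact complex manifold $Y$ is $E_r$-sGG if and only if the natural map $T_r: H_A^{n-1,\, n-1}(Y)\rightarrow E_r^{n,\, n-1}(Y)$ vanishes. Hence it suffices to show that $T_r=0$ on $X$ if and only if $T_r=0$ on $\widetilde X$. By the weak factorization theorem [AKMW02], it is enough to treat the case where $\widetilde X\to X$ is the blow-up of $X$ along a submanifold $Z$ with $\operatorname{codim}_X Z\geq 2$. By \eqref{eqn: blowup} there is an $E_1$-isomorphism
\[
A_{\widetilde X}\simeq_1 A_X\oplus\bigoplus_{i=1}^{\operatorname{codim}Z-1}A_Z[i].
\]
By Lemma \ref{Lem: E_r isos in cohomology}, the functors $H_A^{p,q}$ and $E_r^{p,q}$ send $E_1$-isomorphisms to isomorphisms, and since the map $T_r$ is induced by the identity on forms it is compatible with these direct sum decompositions. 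In particular, $T_r(X)$ is a direct summand of $T_r(\widetilde X)$, so $T_r(\widetilde X)=0$ implies $T_r(X)=0$.

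Conversely, suppose $T_r(X)=0$. The summand $A_Z[i]$ contributes to $H_A^{n-1,\,n-1}(\widetilde X)$ the space $H_A^{n-1-i,\,n-1-i}(Z)$ and to $E_r^{n,\,n-1}(\widetilde X)$ the space $E_r^{n-i,\,n-1-i}(Z)$. Since $\operatorname{codim}_X Z\geq 2$, we have $\dim_\C Z=n-\operatorname{codim}_X Z\leq n-2$, hence $n-i\geq n-(\operatorname{codim}_X Z-1)=\dim_\C Z+1>\dim_\C Z$ for every $i\in\{1,\dots,\operatorname{codim}Z-1\}$. Therefore $E_r^{n-i,\,n-1-i}(Z)=0$, and the contribution of each shifted copy $A_Z[i]$ to $T_r(\widetilde X)$ vanishes trivially. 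Consequently $T_r(\widetilde X)$ is isomorphic to $T_r(X)=0$, so $T_r(\widetilde X)=0$.

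This proves that $T_r=0$ on $X$ if and only if $T_r=0$ on $\widetilde X$, and hence, by the Lemma, that $X$ is $E_r$-sGG if and only if $\widetilde X$ is $E_r$-sGG. In other words, every Gauduchon metric on $X$ is $E_r$-sG if and only if the same holds on $\widetilde X$.
\end{proof}
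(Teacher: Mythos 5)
Your proof is correct and follows essentially the same route as the paper: reduce to a single blow-up via weak factorization, apply the blow-up $E_1$-isomorphism \eqref{eqn: blowup}, and kill the contributions of the shifted summands $A_Z[i]$ by a dimension count. The only cosmetic difference is that you observe the target $E_r^{n-i,\,n-1-i}(Z)$ vanishes because $n-i>\dim_\C Z$, whereas the paper notes that $\partial\eta=0$ for every top-bidegree $(d,d)$-form on $Z$ — the same dimension argument viewed from the source side.
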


\noindent {\it Proof.}  By the weak factorisation theorem [1], it suffices again to check this for blow-ups $\widetilde{X}\rightarrow X$ with $d$-dimensional smooth centers $Z\subset X$ of codimension $\geq 2$. After picking any isomorphism realising formula (\ref{eqn: blowup}), any class $c\in H^{n-1,\,n-1}_A(\widetilde{X})$ can be written as $c=c_X+c_Z$, with $c_X\in H^{n-1,\,n-1}_A(X)$ and $c_Z\in H^{d,\,d}_A(Z)$. Hence, $T_r c=T_r c_X + T_rc_Z=T_rc_X$ since $\del\eta=0$ for all $(d,d)$-forms on $Z$ for dimension reasons.\hfill$\Box$\\

Note that the above map $T_r$ is given in all cases by applying $\del$. Generally speaking, if $A=B\oplus C$, then $H_A(A)=H_A(B)\oplus H_A(C)$ and $E_r(A)=E_r(B)\oplus E_r(C)$ and $T_r^A=T_r^B+T_r^C$. We omitted the superscripts on $T_r$ in the above proof for the sake of simplicity.\\

We end the paper with an obvious open problem suggested by the examples constructed so far.

\begin{Prob}
For every $r\geq 2$, construct page-$r$-$\partial\bar\partial$-manifolds that are not page-$(r\!-\!1)$-$\partial\bar\partial$.
\end{Prob}

We believe such examples exist and the difficulty of constructing them for $r\geq 2$ is related to the general difficulty of constructing manifolds with very non-degenerate Fr\"olicher spectral sequence.

\vspace{2ex}

\noindent {\bf Acknowledgements.} This work was partially supported by 
grant PID2020-115652GB-I00, funded by
MCIN/AEI/10.13039/501100011033, and grant E22-17R ``\'Algebra y Geometr\'{\i}a'' (Gob. Arag\'on/FEDER). 
We are grateful to the referee for useful comments that helped us to improve the presentation of the paper.

\vspace{2ex}

\noindent {\bf References.} 

\vspace{2ex}

\noindent [1] %%%[AKMW02] 
D. Abramovich, K. Karu, K. Matsuki and J. W\l{}odarczyk, {\it Torification and factorization of birational maps}, J. Amer. Math. Soc. {\bf 15} (2002), no. 3, 531--572.

\vspace{1ex}

\noindent [2] %%%[Ang13] 
D. Angella, {\it The cohomologies of the Iwasawa manifold and of its small deformations}, J. Geom. Anal. {\bf 23} (2013), no. 3, 1355--1378.

\vspace{1ex}

\noindent [3] %%%[AK17a] 
D. Angella and H. Kasuya, {\it Cohomologies of deformations of solvmanifolds and closedness of some properties}, North-West. Eur. J. Math. {\bf 3} (2017), 75--105.

\vspace{1ex}

\noindent [4] %%%[AK17b] 
D. Angella and H. Kasuya, {\it Bott-Chern cohomology of solvmanifolds}, Ann. Global  Anal. Geom. {\bf 52} (2017), no. 4, 363--411.

\vspace{1ex}

\noindent [5] %%%[AOUV17] 
D. Angella, A. Otal, L. Ugarte and R. Villacampa, {\it Complex structures of splitting type}, Rev. Mat. Iberoam. {\bf 33} (2017), no. 4, 1309--1350.

\vspace{1ex}

\noindent [6] %%%[ASTT20] 
D. Angella, T. Suwa, N. Tardini and A. Tomassini, {\it Note on Dolbeault cohomology and Hodge structures up to bimeromorphisms}, Complex Manifolds {\bf 7} (2020), no. 1, 194--214.

\vspace{1ex}

\noindent [7] %%%[AT11] 
D. Angella and A. Tomassini, {\it On cohomological decomposition of almost-complex manifolds and deformations}, J. Symplectic Geom. {\bf 9}, no. 3 (2011), 403--428.

\vspace{1ex}

\noindent [8] %%%[AT13] 
D. Angella and A. Tomassini, {\it On the $\partial\bar\partial$-Lemma and Bott-Chern cohomology}, Invent. Math. {\bf 192} (2013), no. 1, 71--81.

\vspace{1ex}

\noindent [9] %%%[Bog78] 
F. A. Bogomolov, {\it Hamiltonian K\"ahler manifolds}, Soviet Math. Dokl. {\bf 19} (1978), 1462--1465.

\vspace{1ex}

\noindent [10] %%%[Buc99] 
N. Buchdahl, {\it On Compact K\"ahler Surfaces}, Ann. Inst. Fourier (Grenoble) {\bf 49} (1999), no. 1, 287--302.

\vspace{1ex}

\noindent [11] %%%[Cam91] 
F. Campana, {\it The class $\mathcal{C}$ is not stable by small deformations}, Math. Ann. {\bf 290} (1991), no. 1, 19--30.

\vspace{1ex}

\noindent [12] %%%[CF01] 
S. Console and A. Fino, {\it Dolbeault cohomology of compact nilmanifolds}, Transform. Groups {\bf 6} (2001), no. 2, 111--124.

\vspace{1ex}

\noindent [13] %%%[CFGU97] 
L. A. Cordero, M. Fern\'andez, A. Gray and L. Ugarte, {\it A general description of the terms in the Fr\"olicher spectral sequence}, Differential Geom. Appl. {\bf 7} (1997), no. 1, 75--84.

\vspace{1ex}

\noindent [14] %%%[Del71] 
P. Deligne, {\it Th\'eorie de Hodge: II}, Publ. Math. Inst. Hautes \'Etudes Sci. {\bf 40}, (1971), 5--57.

\vspace{1ex}

\noindent [15] %%%[DGMS75] 
P. Deligne, Ph. Griffiths, J. Morgan and D. Sullivan, {\it Real Homotopy Theory of K\"ahler Manifolds}, Invent. Math. {\bf 29}, (1975), 245--274.

\vspace{1ex}

\noindent [16] %%%[Fri19] 
R. Friedman, {\it The $\partial\bar\partial$-lemma for general Clemens manifolds}, Pure. Appl. Math. Q. {\bf 15} (2019), no. 4, 1001--1028.

\vspace{1ex}

\noindent [17] %%%[Fr\"o55] 
A. Fr\"olicher, {\it Relations between the cohomology groups of Dolbeault and topological invariant}, Proc. Nat. Acad. Sci. U.S.A. {\bf 41} (1955), 641--644.

\vspace{1ex}

\noindent [18] %%%[Gau77] 
P. Gauduchon, {\it Le th\'eor\`eme de l'excentricit\'e nulle}, C. R. Acad. Sci. Paris, S\'er. A, {\bf 285} (1977), 387--390.

\vspace{1ex}

\noindent [19] %%%[H89] 
K. Hasegawa, {\it Minimal models of nilmanifolds}, Proc. Amer. Math. Soc. {\bf 106} (1989), no. 1, 65--71.

\vspace{1ex}

\noindent [20] %%%[KS20] 
H. Kasuya and J. Stelzig, {\it Fr\"olicher spectral sequence and Hodge structures on the cohomology of complex parallelisable manifolds}, 
to appear in Transform. Groups.
%%%arXiv e-print AG 2011.07905.

\vspace{1ex}

\noindent [21] %%%[KQ20] 
M. Khovanov and Y. Qi, {\it A faithful braid group action on the stable category of tricomplexes}, 
%%%SIGMA {\bf 16}, 019, (2020).
SIGMA Symmetry Integrability Geom. Methods Appl. {\bf 16} (2020), Paper No. 019, 32 pp.

\vspace{1ex}

\noindent [22] %%%[Lam99] 
A. Lamari, {\it Courants k\"ahl\'eriens et surfaces compactes}, Ann. Inst. Fourier (Grenoble) {\bf 49} (1999), no. 1, 263--285.

\vspace{1ex}

\noindent [23] %%%[LU15] 
A. Latorre and L. Ugarte, {\it Cohomological decomposition of complex nilmanifolds}, Topol. Methods Nonlinear Anal. {\bf 45} (2015), no. 1, 215--231.

\vspace{1ex}

\noindent [24] %%%[LP92] 
C. LeBrun and Y. S. Poon, {\it Twistors, K\"ahler manifolds, and bimeromorphic geometry II}, J. Amer. Math. Soc. {\bf 5} (1992), no. 2, 317--325.

\vspace{1ex}

\noindent [25] %%%[LZ09] 
T.-J. Li and W. Zhang, {\it Comparing tamed and compatible symplectic cones and cohomological properties of almost complex manifolds}, Comm. Anal. Geom. {\bf 17}  (2009), no. 4, 651--683.

\vspace{1ex}

\noindent [26] %%%[Mal51] 
A. I. Mal'cev, {\it On a class of homogeneous spaces},  Amer. Math. Soc. Transl. {\bf 39} (1951), 33 pp.

\vspace{1ex}

\noindent [27] %%%[Meng19a] 
L. Meng, {\it The heredity and bimeromorphic invariance of the $\del\delbar$-lemma property}, C. R. Math. Acad. Sci. Paris {\bf 359} (2021), 645???650.

\vspace{1ex}

\noindent [28] %%%[Meng20] 
L. Meng, {\it Leray-Hirsh theorem and blow-up formula for Dolbeault cohomology}, Ann. Mat. Pura Appl. {\bf 199}, (2020), 1997--2014.

\vspace{1ex}

\noindent [29] %%%[Nak75] 
I. Nakamura, {\it Complex parallelisable manifolds and their small deformations}, J. Differential Geometry {\bf 10} (1975), 85--112.

\vspace{1ex}

\noindent [30] %%%[Pop14] 
D. Popovici, {\it Deformation Openness and Closedness of Various Classes of Compact Complex Manifolds; Examples}, Ann. Sc. Norm. Super. Pisa Cl. Sci. (5), Vol. XIII (2014), 255--305.

\vspace{1ex}

\noindent [31] %%%[Pop15] 
D. Popovici, {\it Aeppli Cohomology Classes Associated with Gauduchon Metrics on Compact Complex Manifolds}, Bull. Soc. Math. France {\bf 143} (2015), no. 3, 1--37.

\vspace{1ex}

\noindent [32] %%%[Pop19] 
D. Popovici, {\it Adiabatic Limit and Deformations of Complex Structures}, arXiv e-print AG 1901.04087v2.

\vspace{1ex}

\noindent [33] %%%[PSU21] 
D. Popovici, J. Stelzig and L. Ugarte, {\it Higher-page Bott-Chern and Aeppli Cohomologies and Applications}, J. Reine Angew. Math. {\bf 777} (2021), 157--194.

\vspace{1ex}

\noindent [34] %%%[PU18] 
D. Popovici and L. Ugarte, {\it Compact complex manifolds with small Gauduchon cone}, Proc. Lond. Math. Soc. {\bf 116} (2018), no. 5, 1161--1186.

\vspace{1ex}

\noindent [35] %%%[RYY19] 
S. Rao, S. Yang and X. Yang, {\it Dolbeault cohomologies of blowing up complex manifolds}, J. Math. Pures Appl. {\bf 130}, (2019), 68--92.

\vspace{1ex}

\noindent [36] %%%[Sak76] 
Y. Sakane, {\it On compact complex parallelisable solvmanifolds}, Osaka Math. J. {\bf 13} (1976), no. 1, 187--212.

\vspace{1ex}

\noindent [37] %%%[Ste19] 
J. Stelzig, {\it The Double Complex of a Blow-up}, 
%%%Int. Math. Res. Not. IMRN, (2019) doi: 10.1093/imrn/rnz139.
Int. Math. Res. Not. IMRN 2021, no. 14, 10731--10744.

\vspace{1ex}

\noindent [38] %%%[Ste21] 
J. Stelzig, {\it On the Structure of Double Complexes}, 
%%%J. London Math. Soc. (2021) doi: 10.1112/jlms.12453
J. Lond. Math. Soc. {\bf 104} (2021), no. 2, 956--988.

\vspace{1ex}

\noindent [39] %%%[Tia87] 
G. Tian, {\it Smoothness of the Universal Deformation Space of Compact Calabi-Yau Manifolds and Its Petersson-Weil Metric}, Mathematical Aspects of String Theory (San Diego, 1986), Adv. Ser. Math. Phys. 1, World Sci. Publishing, Singapore (1987), 629--646.

\vspace{1ex}

\noindent [40] %%%[Tod89] 
A. N. Todorov, {\it The Weil-Petersson Geometry of the Moduli Space of $SU(n\geq 3)$ (Calabi-Yau) Manifolds I}, Comm. Math. Phys. {\bf 126} (1989), 325--346.

\vspace{1ex}

\noindent [41] %%%[Wan54] 
H.-C. Wang, {\it Complex Parallelisable Manifolds}, Proc. Amer. Math. Soc. {\bf 5} (1954), 771--776.

\vspace{1ex}

\noindent [42] %%%[Wu06] 
C. C. Wu, {\it On the Geometry of Superstrings with Torsion}, Thesis (Ph.D.), Department of Mathematics, Harvard University, Cambridge MA 02138, (2006).

\vspace{1ex}

\noindent [43] %%%[YY20] 
S. Yang and X. Yang, {\it Bott-Chern blow-up formula and bimeromorphic invariance of the $\del\delbar$-Lemma for threefolds}, Trans. Amer. Math. Soc. {\bf 373} (2020), 8885--8909.

\vspace{6.5ex}

\noindent Institut de Math\'ematiques de Toulouse,       \hfill Mathematisches Institut

\noindent Universit\'e Paul Sabatier,                    \hfill  Ludwig-Maximilians-Universit\"at

\noindent 118 route de Narbonne, 31062 Toulouse, France  \hfill Theresienstr. 39, 80333 M\"unchen, Germany

\noindent Email: popovici@math.univ-toulouse.fr          \hfill Email: Jonas.Stelzig@math.lmu.de

\vspace{1.5ex}

\noindent and

\vspace{1.5ex}

\noindent Departamento de Matem\'aticas\,-\,I.U.M.A.,

\noindent Universidad de Zaragoza,

\noindent Campus Plaza San Francisco, 50009 Zaragoza, Spain

\noindent Email: ugarte@unizar.es

\end{document}